\numberwithin{equation}{section}
\newtheorem{theorem}{Theorem}[section]
\newtheorem{lemma}{Lemma}[section]
\newtheorem{claim}{Claim}[section]
\newtheorem{corollary}[theorem]{Corollary}
\newtheorem{proposition}{Proposition}[section]
\theoremstyle{definition} % \UTF{8BBE}置\UTF{4E3A} definition \UTF{98CE}格（正体）
\newtheoremstyle{remarkstyle} % \UTF{6837}式名称
   {}   % 上方\UTF{95F4}距
   {}   % 下方\UTF{95F4}距
  {\normalfont}  % 正文字体
  {}      % \UTF{7F29}\UTF{8FDB}%
  {\itshape}  % \UTF{6807}\UTF{9898}字体：斜体
  {.}     % \UTF{6807}\UTF{9898}后\UTF{6807}点
  { }     % \UTF{6807}\UTF{9898}与正文之\UTF{95F4}的\UTF{95F4}距
  {}      % \UTF{6807}\UTF{9898}格式（可自定\UTF{4E49}）
\theoremstyle{remarkstyle}
\newtheorem*{remark}{Remark} % * 表示不\UTF{7F16}号
\newtheorem{problem}{Problem}
\begin{document}

\author[K. Matsuzaki]{Katsuhiko Matsuzaki \textsuperscript{$a$}} 
\address{$a.$ Department of Mathematics, School of Education, Waseda University, Tokyo 169-8050, Japan} 
\email{matsuzak@waseda.jp} 

\author[F. Tao]{Fei Tao \textsuperscript{$b$}} 
\address{$b.$ Beijing International Center for Mathematical Research, Peking University, Beijing 100871, P. R. China} 
\email{ferrytau@pku.edu.cn}

\thanks{The second author is partially supported by National Natural Science Foundation of China (No. 12131009 and No.12326601)}

% \subjclass is required.
\subjclass[2020]{Primary 30C62,30H35,30H30; Secondary 46E15, 53A04}

% \author{Katsuhiko MATSUZAKI and Fei TAO}
\title[Characterization of Asymptotically Smooth Curves]{Characterization of Asymptotically Smooth Curves}
\keywords{Asymptotically Smooth Curves, Asymptotically Conformal Curves, Chord-arc Curves, BMOA, VMOA, (Little) Bloch Functions}
\begin{abstract}
We construct an explicit example of an asymptotically conformal chord-arc curve that fails to be asymptotically smooth. This implies that a function belonging to both the little Bloch space and BMOA does not necessarily lie in VMOA, and that a strongly quasisymmetric homeomorphism which is symmetric is not necessarily strongly symmetric. We also provide a complete characterization of asymptotically smooth curves in terms of asymptotic conformality and uniform approximability. 
\end{abstract}
\maketitle

\section{Introduction}

The study of conformal maps from the unit disk $\mathbb D$ onto bounded Jordan domains $\Omega$
has long been a central theme in geometric function theory. A classical result due to Beurling and Ahlfors \cite{BA} shows that such a conformal map $f$ admits a quasiconformal extension
to the exterior disk $\mathbb D^*=\widehat{\mathbb C}\setminus \overline{\mathbb D}$ if and only if
the boundary homeomorphism of the unit circle $\mathbb S=\partial\mathbb D$, induced by conformal welding, is precisely a quasisymmetric self-homeomorphism of $\mathbb S$.

A homeomorphism $h:\mathbb S\to\mathbb S$ is called {\it quasisymmetric} if there exists a constant $M\geq 1$ such that for any $t>0$, $x\in[0,\,2\pi]$, one has
$$
\frac{1}{M} \;\leq\; \left|\frac{h(e^{i(x+t)})-h(e^{ix})}{h(e^{ix})-h(e^{i(x-t)})} \right|\;\leq\; M.
$$
The set of all quasisymmetric self-homeomorphisms of $\mathbb S$ is denoted by $\mathrm{QS}(\mathbb S)$.

By conformal welding, every $h \in {\rm QS}(\mathbb S)$ can be represented as $h = g^{-1} \circ f|_{\mathbb S}$, where $f:\mathbb D \to \Omega$ is the normalized Riemann map onto a bounded quasidisk $\Omega$, and $g:\mathbb D^* \to \Omega^*$ is the normalized Riemann map between the complementary domains in $\widehat{\mathbb C}$.

The universal Teichm\"uller space $T$ is identified with the group ${\rm QS}(\mathbb S)$ of all quasisymmetric self-homeomorphisms of $\mathbb S$ modulo $\mbox{\rm M\"ob}(\mathbb S)$, the group of M\"obius transformations of $\mathbb S$ (see \cite{Le}). Geometrically, it can be regarded as the collection of quasicircles, namely the images of $\mathbb{S}$ under conformal maps $f$ that admit a quasiconformal extension to $\mathbb D^*$. Analytically, it can be described as the set of Bloch functions given by $\varphi=\log f'$ of the corresponding conformal maps. This correspondence has provided a rich interplay between function spaces, circle homeomorphisms, and the geometry of Jordan curves.

A holomorphic function $\varphi$ on the unit disk $\mathbb D$ is called a {\it Bloch function} if it satisfies 
$$
\sup_{z \in \mathbb D}(1-|z|^2)|\varphi'(z)| <\infty.
$$
The Banach space of all Bloch functions (modulo additive constants) is denoted by $B(\mathbb D)$. If $\varphi \in B(\mathbb D)$ vanishes at the boundary in the sense that 
$$
\lim_{|z| \to 1}(1-|z|^2)|\varphi'(z)| =0,
$$ 
then $\varphi$ is called a {\it little Bloch function}. The collection of all such functions forms a closed subspace of $B(\mathbb D)$, denoted by $B_0(\mathbb D)$. This little class contains many other vanishing classes of holomorphic functions.

Motivated by replacing quasiconformal homeomorphisms with bi-Lipschitz homeomorphisms of the Euclidean plane $\mathbb C$, a parallel theory was developed by Astala and Zinsmeister \cite{AZ}. 
The corresponding function spaces are defined as follows. 

A holomorphic function $\varphi$ on $\mathbb D$ belongs to ${\rm BMOA}(\mathbb D)$ if it lies in the Hardy space $H^2(\mathbb{D})$ and its boundary value function 
$\varphi|_{\mathbb S}$ belongs to BMO. 
The function $\varphi$ is in the little subspace 
${\rm VMOA}(\mathbb D)$ if $\varphi|_{\mathbb S}$ lies in VMO. 
These spaces can also be characterized in terms that 
$(1-|z|^2)|\varphi'(z)|^2\,dxdy$ is a Carleson measure, and a vanishing Carleson measure, on $\mathbb D$ (see Section~\ref{Teich} for detailed definitions), respectively.

It is known that ${\rm VMOA}(\mathbb D)$ is contained in $B_0(\mathbb D)$ (see \cite[Corollary 5.2]{Gi}), meaning that the vanishing oscillation condition in the $\mathrm{BMOA}$ norm implies the corresponding vanishing condition in the Bloch space. Since $\mathrm{VMOA}(\mathbb{D})$ is a subspace of $\mathrm{BMOA}(\mathbb{D})$, a natural question then arises: does the vanishing condition for Bloch functions conversely imply that for BMOA? Explicitly, this question asks whether ${\rm VMOA}(\mathbb D)$ coincides with ${\rm BMOA}(\mathbb D) \cap B_0(\mathbb D)$. However, existing studies have not detected any difference between these two classes in several important settings. For instance, available characterizations fail to distinguish them when $\varphi$ is given by a lacunary series $\varphi(z)=\sum_{k=1}^\infty a_k z^{n_k}$ 
$(z \in \mathbb D)$ with Hadamard gaps $\inf_k (n_{k+1}/n_k) >1$ (see \cite[Corollary 2]{HS}, \cite[Chap.~11]{Pom}), 
and also for random holomorphic functions 
$\varphi_\omega(z)=\sum_{n=0}^\infty a_n \omega_n z^n$, where $\omega=(\omega_n)$ is 
a discrete stochastic process consisting of random variables taking values in $\mathbb S$
(see \cite[Section 5]{NP}, \cite[Theorem 3.2]{Sledd}, \cite{Wulan}).

In Section~\ref{Teich}, we provide more detailed background
on the theory of Teichm\"uller spaces, including several definitions of the function spaces introduced in this section. This material serves as preliminaries for the concepts used in the present paper.

Given a conformal map $f$ from $\mathbb{D}$ to a bounded domain $\Omega$ admitting a quasiconformal extension, the logarithm of the derivative $f'$ belongs to the Bloch space, that is, $\varphi=\log f' \in B(\mathbb D)$. The collection $\mathcal T$ of all such $\varphi$ forms a connected open subset of $B(\mathbb D)$ containing the origin (see \cite{Z}), which is interpreted as the Bers fiber space over 
$T \cong {\rm QS}(\mathbb S)/\mbox{\rm M\"ob}(\mathbb S)$ (see \cite[Appendix]{Teo}). We denote this projection by $p:\mathcal T \to T$.

An element $h \in {\rm QS}(\mathbb S)$ is said to be {\it symmetric} if
$$
\lim_{t \to 0}\ \frac{h(e^{i(x+t)})-h(e^{ix})}{h(e^{ix})-h(e^{i(x-t)})}=1
$$ 
uniformly. The subgroup of all such symmetric homeomorphisms is denoted by ${\rm S}(\mathbb S)$. Furthermore, $h \in {\rm QS}(\mathbb S)$ is called 
{\it strongly quasisymmetric} if it is absolutely continuous and its derivative $|h'|$ is a Muckenhoupt $A_\infty$-weight (see \cite{CF}), which in particular implies that $\log h' \in {\rm BMO}(\mathbb{S})$. It is called {\it strongly symmetric} if $\log h' \in {\rm VMO}(\mathbb S)$. The corresponding subgroups are denoted by ${\rm SQS}(\mathbb S)$ and ${\rm SS}(\mathbb S)$, respectively.  We see that ${\rm SS}(\mathbb S)$ is contained in both ${\rm SQS}(\mathbb S)$ and ${\rm S}(\mathbb S)$ (see Section~\ref{Teich}).

Under the identification of $T$ with ${\rm QS}(\mathbb S)$ modulo $\mbox{\rm M\"ob}(\mathbb S)$, we have that ${\rm S}(\mathbb S)$ corresponds to the little universal Teich\-m\"ul\-ler space $T_0$, ${\rm SQS}(\mathbb S)$ to the BMO Teich\-m\"ul\-ler space $T_B$, and ${\rm SS}(\mathbb S)$ to the VMO Teich\-m\"ul\-ler space $T_V$. 
Through the projection $p:\mathcal T \to {\rm QS}(\mathbb S)/\mbox{\rm M\"ob}(\mathbb S)$, these spaces are related by
\begin{align*}
T_0&=p(B_0(\mathbb D) \cap \mathcal T)={\rm S}(\mathbb S)/\mbox{\rm M\"ob}(\mathbb S);\\ 
T_B&=p({\rm BMOA}(\mathbb D) \cap \mathcal T)={\rm SQS}(\mathbb S)/\mbox{\rm M\"ob}(\mathbb S);\\ 
T_V&=p({\rm VMOA}(\mathbb D) \cap \mathcal T)={\rm SS}(\mathbb S)/\mbox{\rm M\"ob}(\mathbb S).
\end{align*}

These relationships between the subspaces of $\mathcal T \subset B(\mathbb D)$ and ${\rm QS}(\mathbb S)$ 
indicate that the question of whether the inclusion ${\rm VMOA}(\mathbb D) \subset {\rm BMOA}(\mathbb D) \cap B_0(\mathbb D)$ is strict is equivalent to asking whether the inclusion ${\rm SS}(\mathbb S) \subset {\rm SQS}(\mathbb S) \cap {\rm S}(\mathbb S)$ is strict. However, constructing an explicit element in this possible gap remains challenging. For instance, while $\log|z-1|$ on $\mathbb S$ lies in ${\rm BMO}(\mathbb S) \setminus {\rm VMO}(\mathbb S)$, the integral of $|z-1|$ near $1$ does not yield a vanishing quasisymmetry quotient.

Let $f:\mathbb D \to \Omega$ be a conformal homeomorphism. To characterize $\varphi = \log f'$, we consider the following geometric properties of the bounded Jordan curve $\Gamma = \partial \Omega$, which were introduced by Pommerenke \cite{Po78}. 
For distinct points $a, b \in \Gamma$,
the subarc between $a$ and $b$ with smaller diameter is denoted by
$\wideparen{ab}$, and its length is denoted by $\ell(\wideparen{ab})$ when it is rectifiable.
A quasicircle $\Gamma$ is said to be {\em asymptotically conformal} if
$$
\lim_{|a-b| \to 0} \max_{w \in \wideparen{ab}} \frac{|a-w|+|w-b|}{|a-b|}=1.
$$
A rectifiable curve $\Gamma$ is called {\em chord-arc} if there exists $C \geq 1$ such that
$\ell(\wideparen{ab})/|a-b| \leq C$ for any $a,b \in \Gamma$, and {\em asymptotically smooth} if
$$
\lim_{|a-b| \to 0}\frac{\ell(\wideparen{ab})}{|a-b|}=1.
$$

The geometric characterization asserts that $\varphi = \log f' \in B_0(\mathbb D) \cap \mathcal T$ if and only if 
$\Gamma = f(\mathbb S)$ is asymptotically conformal, and that $\varphi \in {\rm VMOA}(\mathbb D) \cap \mathcal T$ 
if and only if 
$\Gamma$ is asymptotically smooth \cite[p. 172]{Pom}. In addition, $\varphi$ lies in an open subset containing the origin in 
${\rm BMOA}(\mathbb D)$ if $\Gamma$ is chord-arc \cite{Zin}; conversely, if $\varphi$ has sufficiently small BMOA norm, then $\Gamma$ is chord-arc \cite[Theorem 3]{Pom1}, see also \cite{Se}. More generally, an element of ${\rm BMOA}(\mathbb D) \cap \mathcal T$ is characterized by the Bishop--Jones condition on $\Gamma$
given in \cite[Theorem 4]{BJ}, which, roughly speaking, requires that from every point in the interior domain $\Omega$ bounded by $\Gamma$, a uniformly large portion of $\Gamma$ is visible along a chord-arc curve.

By reformulating the question of inclusion relations between function spaces in terms of the geometric behavior of boundary curves of conformal maps, we obtain the answer through a concrete example.

\begin{theorem}\label{main1}
There exists an asymptotically conformal chord-arc curve that is not asymptotically smooth.
\end{theorem}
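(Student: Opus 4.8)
The plan is to construct $\Gamma$ explicitly as a curve built from infinitely many successive ``wiggles'' whose amplitude and spacing are carefully tuned so that each wiggle contributes a definite excess of arclength over chordlength that does \emph{not} decay, while the turning involved decays to zero. Concretely, I would take $\Gamma$ to agree with a straight segment away from a sequence of points $x_k \to 0$ and, on an interval of length $\ell_k$ around $x_k$, replace the segment by a graph $y = \eta_k(x)$ where $\eta_k$ is a smooth bump of height $h_k$ with $h_k/\ell_k \to 0$ (this forces asymptotic conformality: the chord of any short subarc meeting the $k$th wiggle has direction deviating from the segment by $O(h_k/\ell_k)\to 0$, and the ``triangle inequality ratio'' $(|a-w|+|w-b|)/|a-b|$ is controlled by the square of this slope). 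However, I would arrange that inside each scale $\ell_k$ there are many oscillations — say $N_k$ full periods of amplitude $h_k$ — chosen so that $N_k (h_k/\ell_k)^2$ stays bounded below by a fixed positive constant; then a subarc spanning one full scale has $\ell(\wideparen{ab})/|a-b| \ge 1 + c N_k (h_k/\ell_k)^2 \ge 1+c' > 1$, and since $|a-b|\to 0$ along these subarcs, $\Gamma$ is \emph{not} asymptotically smooth. The global chord-arc bound holds because the total excess length is the sum of the wiggle excesses, which I will make summable (or bounded), while chords of any two points are comparable to the straight-line distance since the wiggles are laid out along a line.

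The key steps, in order, are: (1) fix the geometric template — a horizontal segment with disjoint wiggle windows $I_k=[x_k-\ell_k/2,\,x_k+\ell_k/2]$ at locations $x_k\downarrow 0$, with parameters $\ell_k, h_k, N_k$ to be specified, and within $I_k$ put $y=h_k\,\psi(N_k(x-x_k)/\ell_k)\cdot\chi(x)$ for a fixed smooth periodic profile $\psi$ and a smooth cutoff $\chi$ supported in $I_k$; (2) choose the parameters: demand $h_k/\ell_k\to 0$ (for asymptotic conformality and for the slopes of $\Gamma$ to tend to $0$), $\sum_k N_k h_k^2/\ell_k<\infty$ \emph{per window} so that $\ell(\wideparen{ab})\le |a-b|$ up to a bounded multiplicative constant globally (giving chord-arc), yet $N_k(h_k/\ell_k)^2\ge c_0>0$ so each window alone produces non-vanishing arclength excess; a clean choice is $h_k/\ell_k = 1/\sqrt{N_k}$ with $N_k\to\infty$ slowly and $\ell_k$ decaying geometrically so the windows are disjoint and accumulate only at $0$; (3) verify asymptotic conformality: for $|a-b|$ small, either $\wideparen{ab}$ lies in a region where $\Gamma$ is a straight segment (ratio $=1$) or it meets finitely many windows with index $\to\infty$ as $|a-b|\to 0$, and on those the maximal slope is $\lesssim h_k N_k/\ell_k$; here I must be careful — the \emph{local} slope inside a window is of order $h_kN_k/\ell_k$, which does \emph{not} tend to $0$ unless I instead use a profile with slowly varying amplitude, so the correct design is to let the \emph{amplitude} itself be $h_k$ while the number of oscillations is absorbed into making the per-oscillation slope $h_k/(\ell_k/N_k)=h_kN_k/\ell_k$ small, i.e.\ one really needs $h_kN_k/\ell_k\to0$ \emph{and} $N_k(h_k/\ell_k)^2=(h_kN_k/\ell_k)^2/N_k\cdot N_k^2/N_k$… — so the parameters must satisfy $h_kN_k/\ell_k\to 0$ while $N_k^{-1}(h_kN_k/\ell_k)^2\cdot N_k = (h_kN_k/\ell_k)^2\not\to 0$, which is impossible, revealing that the oscillation excess is governed by the \emph{slope squared integrated}, namely $\approx N_k(\ell_k/N_k)(h_kN_k/\ell_k)^2=\ell_k h_k^2 N_k^2/\ell_k^2$, and one wants this $\gtrsim \ell_k$, i.e.\ $h_kN_k/\ell_k\gtrsim 1$ — contradicting asymptotic conformality; (4) therefore the resolution is to NOT use high-frequency oscillation but a \emph{single} asymmetric bump whose two sides have different, slowly-decaying slopes so that the triangle ratio $(|a-w|+|w-b|)/|a-b|$ is small (order slope squared) while the arc/chord ratio picks up a term of order the slope to the first power times a bounded geometric factor — no: for a single bump arc/chord excess is also second order; the genuine mechanism must be a \emph{self-similar cascade}, i.e.\ a snowflake-type curve truncated at scale $\ell_k$, where asymptotic conformality forces the generator angle $\to 0$ but, crucially, at each fixed small scale one sees $M_k\to\infty$ generations of a fixed-angle-ratio pattern whose cumulative length ratio is $\prod(1+c\theta_j^2)$ with $\sum\theta_j^2$ divergent on each scale window but $\theta_j\to0$ — this is the real construction.

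Consequently the refined step (2$'$) is: build $\Gamma$ as a limit of polygonal curves obtained by a Koch-type substitution with a \emph{variable} generator of turning angle $\theta_k\to0$, applied at scales $2^{-n}$ for $n$ in a block $B_k$ of length $L_k$, where $L_k\theta_k^2\ge c_0>0$ (non-vanishing arc/chord excess within each block, hence failure of asymptotic smoothness as we zoom into deeper blocks) while $\theta_k\to 0$ (asymptotic conformality, since locally the curve looks like a small-angle Koch curve whose Hölder/John constants tend to those of a segment) and $\sum_k L_k\theta_k^2<\infty$ is NOT required — instead chord-arc only needs $\sum (\text{excess on block }k)<\infty$ in the sense that the total length is finite, which holds iff $\prod_k(1+c\theta_k^2)^{L_k}<\infty$, i.e.\ $\sum_k L_k\theta_k^2<\infty$; but that contradicts $L_k\theta_k^2\ge c_0$ over infinitely many $k$ — unless the blocks are \emph{finite in number at each location}, i.e.\ only the \emph{last} block before reaching the accumulation point is active, so the curve is a countable union of pieces each of which is a finite-depth small-angle Koch curve, glued along a line, with the angle $\theta_k\to0$ and depth $L_k\to\infty$ subject only to $L_k\theta_k^2\ge c_0$, placed on disjoint intervals $I_k$ of length $|I_k|\to0$; then the \emph{global} length of $\Gamma$ is $\sum_k |I_k|(1+c\theta_k^2)^{L_k}$, which is finite provided $(1+c\theta_k^2)^{L_k}$ grows sub-geometrically relative to $|I_k|\to 0$, easily arranged. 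Thus: \textbf{Step A} — define the block-Koch piece $\gamma_k$ on $[0,1]$ with angle $\theta_k$, depth $L_k$; \textbf{Step B} — rescale $\gamma_k$ to sit on $I_k$ and concatenate with straight segments to form $\Gamma$, a Jordan curve; \textbf{Step C} — prove $\Gamma$ is a chord-arc: any chord is comparable to the line distance (all $\gamma_k$ have bounded fluctuation $\theta_k\cdot|I_k|\ll|I_k|$) and $\ell(\wideparen{ab})\le C|a-b|$ with $C=\sup_k(1+c\theta_k^2)^{L_k}<\infty$ after choosing $L_k\theta_k^2$ bounded above too; \textbf{Step D} — prove $\Gamma$ is asymptotically conformal: for $|a-b|\to0$, $\wideparen{ab}$ eventually sits inside a single $I_k$ with $k\to\infty$, where it is (a piece of) a Koch curve of small angle $\theta_k\to0$, for which $\max_w(|a-w|+|w-b|)/|a-b|=1+O(\theta_k)\to1$; \textbf{Step E} — prove $\Gamma$ is \emph{not} asymptotically smooth: inside $I_k$ pick $a,b$ spanning one full generator at depth $L_k$, giving $\ell(\wideparen{ab})/|a-b|\ge(1+c\theta_k^2)^{L_k-O(1)}\ge 1+c\,c_0$, and $|a-b|\le|I_k|\to0$. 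The main obstacle — and the delicate part of the whole argument — is the simultaneous reconciliation in Step A–B–D of ``angle $\to0$'' (asymptotic conformality) with ``cumulative excess stays bounded below'' (failure of smoothness): this is exactly why a \emph{single-scale} bump cannot work and a \emph{variable-angle, growing-depth} Koch cascade is forced, and getting the quantitative estimate $\ell(\wideparen{ab})/|a-b|=1+O(\theta_k)$ for the small-angle Koch curve (uniformly in the depth $L_k$!) — i.e., that the arc/chord ratio of the \emph{whole} depth-$L_k$ piece is close to $1$ while a \emph{sub}-piece at the finest generation has ratio bounded away from $1$ — requires a careful telescoping/renormalization estimate, and is where I expect to spend most of the effort.
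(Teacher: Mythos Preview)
Your final approach---placing on shrinking intervals $I_k$ a finite-depth cascade with generator angle $\theta_k\to 0$ and depth $L_k\to\infty$ tuned so that $L_k\theta_k^2$ is bounded above and bounded away from~$0$---is exactly the mechanism the paper uses: the paper takes $\sin^2$ bumps embedded in Frenet frames rather than polygonal Koch generators, with parameters $\theta_k^2 \leftrightarrow \beta_n^{(k)}=k/n^2$ and depth $L_k\leftrightarrow n$ on the interval $[2^{-n},2^{1-n}]$, so that $\sum_{k=1}^n \beta_n^{(k)}\asymp 1$ plays the role of your $L_k\theta_k^2\asymp 1$.

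Two corrections to your write-up, however. In Step~E you want $a,b$ to be the \emph{endpoints} of $I_k$ (depth~$0$), not ``depth~$L_k$''---at the bottom generation the pieces are straight and the ratio is~$1$. More importantly, your final paragraph conflates the two ratios: what must be $1+O(\theta_k)$ uniformly in the depth is the \emph{triangle ratio} $\max_{w\in\wideparen{ab}}(|a-w|+|w-b|)/|a-b|$ (this is asymptotic conformality), while the \emph{arc/chord ratio} $\ell(\wideparen{ab})/|a-b|$ of the full piece over $I_k$ must stay bounded away from~$1$ (this is the failure of asymptotic smoothness). You wrote these the wrong way around; since the arc/chord ratio always dominates the triangle ratio there is no contradiction, and indeed the whole point of the cascade is to separate the two. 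The ``careful telescoping'' you anticipate is real---the paper spends Lemmas~3.3--3.5 on it---but it is the triangle-ratio bound, not the arc/chord bound, that needs to be uniform in depth.
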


\begin{corollary}\label{answer}
$(1)$ The inclusion ${\rm VMOA}(\mathbb D) \subset {\rm BMOA}(\mathbb D) \cap B_0(\mathbb D)$ is strict.\\
$(2)$ The inclusion ${\rm SS}(\mathbb S) \subset {\rm SQS}(\mathbb S) \cap {\rm S}(\mathbb S)$ is strict.
\end{corollary}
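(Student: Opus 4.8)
The plan is to read off both statements directly from Theorem~\ref{main1}, using the geometric characterizations recalled in the introduction together with the identifications $T_0=p(B_0(\mathbb D)\cap\mathcal T)={\rm S}(\mathbb S)/\mbox{\rm M\"ob}(\mathbb S)$, $T_B=p({\rm BMOA}(\mathbb D)\cap\mathcal T)={\rm SQS}(\mathbb S)/\mbox{\rm M\"ob}(\mathbb S)$ and $T_V=p({\rm VMOA}(\mathbb D)\cap\mathcal T)={\rm SS}(\mathbb S)/\mbox{\rm M\"ob}(\mathbb S)$. Let $\Gamma$ be the curve produced by Theorem~\ref{main1}, let $f:\mathbb D\to\Omega$ be a conformal map onto the bounded Jordan domain $\Omega$ with $\partial\Omega=\Gamma$, and set $\varphi=\log f'$. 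A chord-arc curve is a quasicircle, so $f$ admits a quasiconformal extension and $\varphi\in\mathcal T$. Now apply the three characterizations in turn: since $\Gamma$ is chord-arc, $\varphi$ lies in an open neighborhood of the origin in ${\rm BMOA}(\mathbb D)$ \cite{Zin}, hence in particular $\varphi\in{\rm BMOA}(\mathbb D)$; since $\Gamma$ is asymptotically conformal, $\varphi\in B_0(\mathbb D)$; and since $\Gamma$ is \emph{not} asymptotically smooth, the characterization of ${\rm VMOA}(\mathbb D)\cap\mathcal T$ by asymptotic smoothness (recall $\varphi\in\mathcal T$) forces $\varphi\notin{\rm VMOA}(\mathbb D)$. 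Therefore $\varphi\in\bigl({\rm BMOA}(\mathbb D)\cap B_0(\mathbb D)\bigr)\setminus{\rm VMOA}(\mathbb D)$. Since the reverse inclusion ${\rm VMOA}(\mathbb D)\subset{\rm BMOA}(\mathbb D)\cap B_0(\mathbb D)$ already holds (${\rm VMOA}(\mathbb D)$ is a subspace of ${\rm BMOA}(\mathbb D)$, and ${\rm VMOA}(\mathbb D)\subset B_0(\mathbb D)$ by \cite[Corollary 5.2]{Gi}), this proves part~(1).

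For part~(2), let $h\in{\rm QS}(\mathbb S)$ be the conformal welding of $\Gamma$, so that $p(\varphi)$ is the class of $h$. From $\varphi\in{\rm BMOA}(\mathbb D)\cap\mathcal T$ and $\varphi\in B_0(\mathbb D)\cap\mathcal T$, the identifications above give $p(\varphi)\in T_B\cap T_0$, hence $h\in{\rm SQS}(\mathbb S)\cap{\rm S}(\mathbb S)$. It remains to show $h\notin{\rm SS}(\mathbb S)$, equivalently $p(\varphi)\notin T_V$. Suppose to the contrary that $p(\varphi)\in T_V=p({\rm VMOA}(\mathbb D)\cap\mathcal T)$; then by the characterization of ${\rm VMOA}(\mathbb D)\cap\mathcal T$ there is a bounded asymptotically smooth Jordan curve $\Gamma'$ (in particular a quasicircle) whose conformal welding is $h$, up to the choice of exterior normalization. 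By the uniqueness of conformal welding for quasicircles up to M\"obius transformations (see \cite{Le}), $\Gamma'=A(\Gamma)$ for some M\"obius transformation $A$ of $\widehat{\mathbb C}$; because $\Gamma$ and $\Gamma'$ are bounded, the pole of $A$ avoids $\Gamma$ and the pole of $A^{-1}$ avoids $\Gamma'$, so $A$ is conformal on a neighborhood of $\Gamma$ and $A^{-1}$ on a neighborhood of $\Gamma'$. The key observation is that asymptotic smoothness of a rectifiable Jordan curve is preserved by a M\"obius transformation conformal on a neighborhood of the curve: for $a,b$ on the curve with $|a-b|\to0$ such a map agrees with a Euclidean similarity up to a multiplicative factor $1+o(1)$ and carries the smaller subarc $\wideparen{ab}$ to the smaller subarc, so it distorts the ratio $\ell(\wideparen{ab})/|a-b|$ by only $1+o(1)$. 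Applying this to $A^{-1}$ shows that $\Gamma=A^{-1}(\Gamma')$ is asymptotically smooth, contradicting Theorem~\ref{main1}. Hence $h\notin{\rm SS}(\mathbb S)$; since ${\rm SS}(\mathbb S)\subset{\rm SQS}(\mathbb S)\cap{\rm S}(\mathbb S)$ already holds, part~(2) follows.

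Almost all of this is formal bookkeeping with the dictionary between $\Gamma$, $\varphi=\log f'$ and the welding $h$. The one step that needs genuine care is the final reduction in part~(2): passing from $\varphi\notin{\rm VMOA}(\mathbb D)$ to $p(\varphi)\notin T_V$ requires controlling the entire fiber of $p$ over $p(\varphi)$, not merely the single representative $\varphi$. I expect this --- really a short infinitesimal-conformality estimate showing that asymptotic smoothness of a bounded Jordan curve is invariant under the M\"obius transformations that relate any two bounded quasicircles with the same conformal welding --- to be the main, though modest, obstacle. Granting it, Corollary~\ref{answer} follows at once from Theorem~\ref{main1}.
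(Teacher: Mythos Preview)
Your proposal is correct and matches the paper's approach: Corollary~\ref{answer} is stated without a separate proof, both parts being read off from Theorem~\ref{main1} via the geometric characterizations of $B_0(\mathbb D)\cap\mathcal T$, ${\rm BMOA}(\mathbb D)\cap\mathcal T$, ${\rm VMOA}(\mathbb D)\cap\mathcal T$ and the identifications of $T_0,T_B,T_V$ with the circle-homeomorphism groups already recorded in the introduction. The fiber issue you flag in part~(2) can alternatively be bypassed by the fact $\widetilde{\mathcal T}_V=\widetilde{\mathcal T}\cap{\rm VMOA}(\mathbb D)$ noted in Section~\ref{Teich} (so $\varphi\notin{\rm VMOA}(\mathbb D)$ already forces $p(\varphi)\notin T_V$), though your M\"obius-invariance argument is also valid.
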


We remark that a Jordan curve obtained by appropriately connecting the endpoints of the graph of a continuous function on a bounded closed interval cannot realize the gap between the classes of curves stated in Theorem~\ref{main1}. Taking a graph domain $\Omega$ in \cite[Example~4]{GGPPR} as an example, let $\Omega$ be induced by the function $y = x^2 \cos(\pi/x^2)$ on $[-1,1]$, and let $\varphi=\log g'$ be the conformal homeomorphism $g:\mathbb D \to \Omega$. Then non-rectifiability of the boundary $\partial \Omega$ implies that $\varphi \notin {\rm VMOA}(\mathbb D)$, but $\varphi \in {\rm BMOA}(\mathbb D)$ holds. However, contrary to the statement given in \cite{GGPPR}, $\varphi$ does not belong to $B_0(\mathbb D)$.

Beyond the example for Theorem~\ref{main1}, we also establish a positive result: under an additional geometric condition, asymptotic conformality implies asymptotic smoothness for chord-arc curves.
A chord-arc curve $\Gamma$ is said to be {\em uniformly approximable} if, for every $\varepsilon > 0$, 
there exists a positive integer $n \in \mathbb N$ such that every subarc $\wideparen{ab} \subset \Gamma$ 
with endpoints $a, b$ admits a partition 
$a = a_0, a_1, \ldots, a_n = b$ along $\wideparen{ab}$ satisfying
$$
(1+\varepsilon)\sum_{i=1}^n |a_i - a_{i-1}| \geq \ell(\wideparen{ab}).
$$
The gap between the two notions, asymptotic conformality and asymptotic smoothness, can thus be precisely bridged by this additional condition. Namely, a bounded chord-arc curve is asymptotically smooth if and only if it is asymptotically conformal and uniformly approximable (Theorem~\ref{main2}).

As another positive result, we introduce an intermediate Teichm\"uller space $T_{B_0}$,
identified with ${\rm SQS}(\mathbb S) \cap {\rm S}(\mathbb S)$ modulo $\mbox{\rm M\"ob}(\mathbb S)$, which lies between $T_B$ and $T_V$. We claim that $T_{B_0}$ shares the same basic properties as $T_B$ (Theorem~\ref{intermediate}).
More importantly, we investigate this space from the perspective of whether certain properties of $T_V$ can be extended to $T_{B_0}$. We raise questions in this direction and present the results obtained.

The organization of the paper is as follows.
Sections~\ref{counterexample} and \ref{verf} are devoted to the proof of Theorem~\ref{main1}, including preliminaries on curves, the construction of the example, and verification of the required properties, respectively.
In Section~\ref{charac}, we establish a characterization of asymptotic smoothness via uniform approximability (Theorem~\ref{main2}).
Finally, Section~\ref{Teich} surveys the BMO and VMO Teichm\"uller spaces and
introduces an intermediate Teichm\"uller space between them.
In Theorem~\ref{intermediate}, we state its basic properties in quasiconformal Teichm\"uller theory, after which we discuss further questions and potential applications
concerning this space.

%%%%%%%%%%%%%%%%%%%%%%%%%%%%%%%%%%%%%%%%%%%%%%%%%

\section{Construction of the Example}\label{counterexample}
We say that a planar curve $\Gamma$ is of class $C^k$ if it admits a $k$-times continuously differentiable parametrization $\mathbf{r}(t)=(X(t),Y(t))$ with $\dot{\mathbf{r}}(t)\neq 0$ for all $t$. Now, let $\Gamma$ be a $C^2$ planar curve, and let $\Gamma(s)$ denote its arc-length parametrization. We consider the natural Frenet frame along $\Gamma$ (see Figure~\ref{fs}). More precisely, there exists a local coordinate based on the Frenet frame given by
\[
    z(s, u)=\Gamma(s)+u N(s),
    \]
where $T(s)=\dot{\Gamma}(s)$ is the unit tangent vector and $N(s)$ is the unit normal vector
in the positive direction from $T(s)$. Here, $s$ represents the arc-length parameter (position along the curve) and $u$ is the coordinate in the normal direction at $\Gamma(s)$.

\begin{figure}[htp]
 \centering
 \includegraphics[width=6cm]{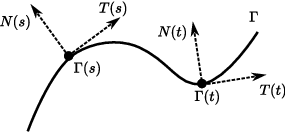}
%\captionsetup{justification=centering}
 \caption{The Frenet frame field along $\Gamma$}
 \label{fs}
\end{figure}

The $2$-dimensional Frenet--Serret formulas describe the kinematic properties of a particle moving along a $C^2$ curve in the complex plane $\mathbb{C}$:
\[
    \dot{T}(s) = \kappa(s)N(s), \quad \dot{N}(s) = -\kappa(s)T(s).
\]

For each $h>0$, define
\[
    f_{h}(t)=h\sin^{2}(\pi t), \quad t\in[0,1].
\]
We also denote by $f_{h}$ the graph of the function $f_{h}(t)$. With the above notation, we embed the graph of $f_{h}(t)$ into the local coordinate system $(s,u)\mapsto z(s, u)$ for $ s\in[0,1]$ (see Figure~\ref{fsembed}). We say that $\Gamma(s)$ is the \emph{projection} of $\tilde{f}_h(s)$ onto $\Gamma$ for $s\in[0,1]$, meaning that the two points share the same abscissa in the local Frenet frame. 

Explicitly, the embedded curve $\tilde{f}_h(s)$ and its derivative are given by:
\begin{align*}
    \tilde{f}_h(s)&\eqqcolon z(s, f_h(s))=\Gamma(s)+f_h(s) N(s),\\
    \dot{\tilde{f}}_h(s)&=\dot{\Gamma}(s)+\dot{f}_h(s) N(s)+f_h(s) \dot{N}(s).
\end{align*}

\begin{figure}[htp]
 \centering
 \includegraphics[width=10cm]{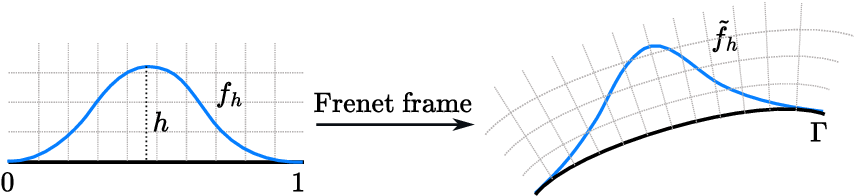}
%\captionsetup{justification=centering}
 \caption{Illustration of $f_h$ embedded into the local Frenet frame of $\Gamma$}
 \label{fsembed}
\end{figure}

By the Frenet--Serret formulas, we obtain
\begin{align*}
    \dot{\tilde{f}}_h(s)
    &=T(s)+\dot{f}_{h} (s) N(s)-\kappa(s) f_{h}(s) T(s)\\
    &=\big(1-\kappa(s) f_{h}(s)\big) T(s)+\dot{f}_{h} (s) N(s).
\end{align*}
Therefore, the norm of $\dot{\tilde{f}}_h(s)$ is given by
\begin{align}\label{tangent}
    \big\|\dot{\tilde{f}}_h(s)\big\|
    &=\sqrt{\big(1-\kappa(s) f_h(s)\big)^2+\big(\dot{f}_{h}(s)\big)^2} \notag\\ 
    &=\sqrt{1+\big(\dot{f}_{h}(s)\big)^2-2 \kappa(s) f_h(s)+\kappa^2(s) f_h^2(s)}.
\end{align}

In this section, we shall construct a curve $\gamma$ that is asymptotically conformal and chord-arc, but not asymptotically smooth
as required in Theorem \ref{main1}. The construction proceeds as follows.

Denote $\gamma_{n}^{(1)}$ as the graph of 
\[
    f_{\frac{1}{n\cdot 2^{n}}}\left(2^{n}(t-\tfrac{1}{2^{n}})\right), \quad t\in\left[\tfrac{1}{2^{n}},\tfrac{1}{2^{n-1}}\right], \quad n=1,\,2,\cdots.
\]
See Figure~\ref{gamma1} below.

\begin{figure}[htp]
 \centering
 \includegraphics[width=8cm]{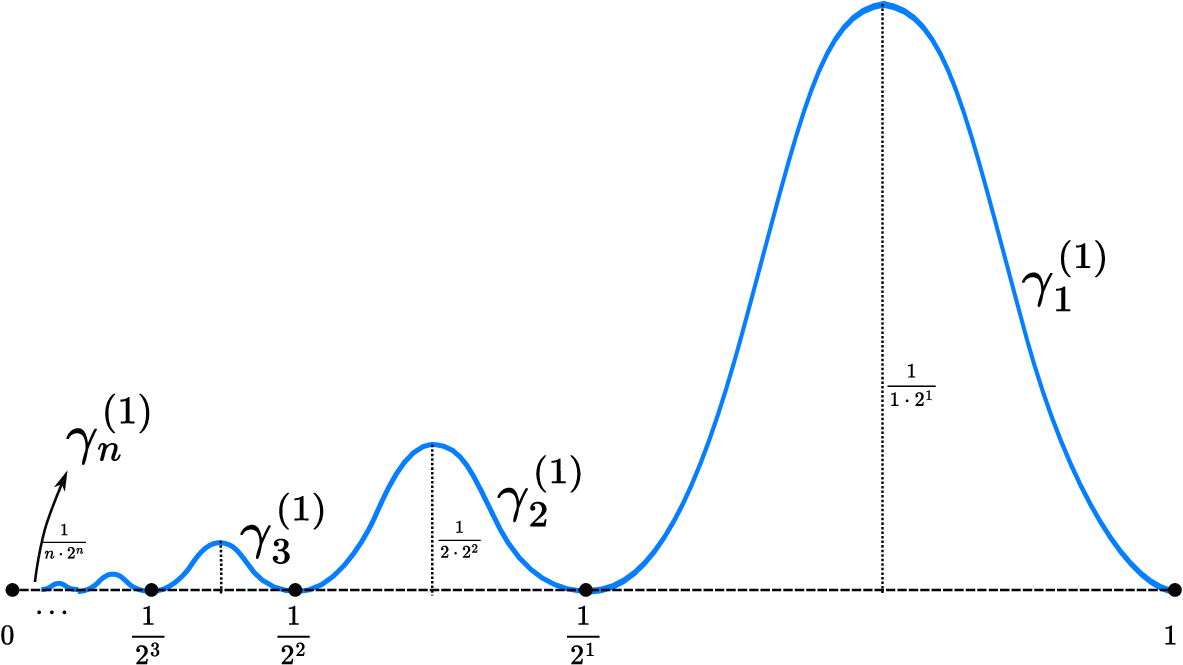}
%\captionsetup{justification=centering}
 \caption{Graphs of $f_{\frac{1}{n\cdot 2^{n}}}\left(2^{n}\left(t-\frac{1}{2^{n}}\right)\right)$}
 \label{gamma1}
\end{figure}

Let $\Gamma(s)$ denote the arc-length parametrization of a planar curve $\Gamma$ and let $C\cdot\Gamma$ denote the curve obtained by scaling $\Gamma$ by a factor of $C$ while keeping $\Gamma(0)$ fixed. 
Set 
\[
    \beta_{n}^{(k)}=\frac{k}{n^{2}}, \quad k=1,\cdots, n.
\] 

For any fixed $n\geq 2$, we construct the curve $\gamma_n^{(2)}$ from $\gamma_n^{(1)}$ as follows (see Figure~\ref{gamma_2_from_gamma_1}). First, divide $\gamma_{n}^{(1)}$ into subarcs $\sigma_{i}^{(1)}$ determined by its inflection points. If necessary, reverse the orientation of $\sigma_{i}^{(1)}$ so that $\sigma_{i}^{(1)}$ can be considered as a strictly convex upward curve with negative curvature. Set 
\[
    \varepsilon_{n}^{(1)}=\min\left\{\tfrac{\sqrt{\beta_{n}^{(1)}}}{K_{n}^{(1)}},\frac{1}{2}\varepsilon_{n}^{(0)}\right\},\quad \varepsilon_{n}^{(0)}=\frac{1}{2^{n+1}},
\]
where 
\[
    K_{n}^{(1)}= \sup_{s}|\kappa_{n}^{(1)}(s)|,
\]
and $\kappa_{n}^{(1)}(s)$ is the curvature of $\gamma_{n}^{(1)}$.

For each arc $\sigma_{i}^{(1)}$, there exists a constant $\alpha_{i}^{(1)} \in [1,2)$ such that $\sigma_{i}^{(1)}$ can be partitioned into subarcs $\{\sigma_{i_m}^{(1)}\}_{m=1}^{N_i^{(1)}}$ satisfying
\[
    \ell\big(\sigma_{i_m}^{(1)}\big) = \alpha_{i}^{(1)}\varepsilon_{n}^{(1)}, \quad m = 1,\ldots,N_i^{(1)}.
\]
Indeed, the integer $N_i^{(1)}$ is uniquely determined by the inequalities
\[
    N_i^{(1)}\varepsilon_{n}^{(1)} \leq \ell\big(\sigma_i^{(1)}\big) < \big(N_i^{(1)}+1\big)\varepsilon_{n}^{(1)},
\]
and then we set
\[
    \alpha_i^{(1)} =\frac{\ell\big(\sigma_i^{(1)}\big)}{N_i^{(1)}\varepsilon_{n}^{(1)}}.
\]

Consider the curve obtained by embedding $f_{\sqrt{\beta_n^{(1)}}}$ onto the local coordinate of the normalized arc $\ell\big(\sigma_{i_m}^{(1)}\big)^{-1}\cdot\sigma_{i_{m}}^{(1)}$, and denote this curve by $\Lambda_{i_{m}}^{(1)}$. Rescaling $\Lambda_{i_{m}}^{(1)}$ back by the factor $\ell\big(\sigma_{i_m}^{(1)}\big)$ yields a rescaled curve, denoted by $\lambda_{i_ m}^{(1)}$; namely, 
\[
    \lambda_{i_ m}^{(1)}=\ell\big(\sigma_{i_m}^{(1)}\big)\cdot\Lambda_{i_{m}}^{(1)}.
\]
We then define
\[
    \gamma_n^{(2)}\coloneqq\bigcup_{i,\, m}\lambda_{i_m}^{(1)}.
\]
\begin{figure}[htp]
 \centering
 \includegraphics[width=10cm]{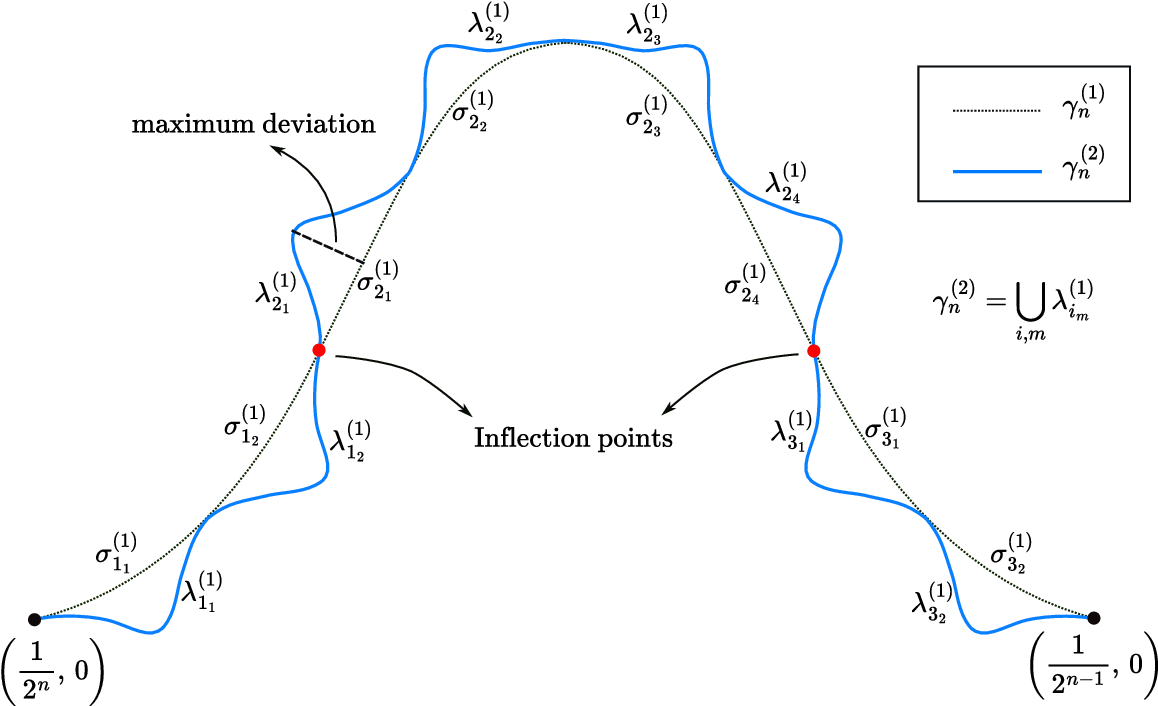}
%\captionsetup{justification=centering}
 \caption{Illustration of how $\gamma_n^{(2)}$ is obtained from $\gamma_n^{(1)}$}
 \label{gamma_2_from_gamma_1}
\end{figure}

For any $k\leq n$ with a fixed $n$, 
$\gamma_{n}^{(k)}$ is constructed inductively in the following way. Divide $\gamma_{n}^{(k-1)}$ into subarcs $\sigma_{i}^{(k-1)}$ by its inflection points. If necessary, reverse the orientation of $\sigma_{i}^{(k-1)}$ so that each subarc $\sigma_{i}^{(k-1)}$ can be regarded as a strictly convex upward curve with negative curvature. Set 
\[
    \varepsilon_{n}^{(k-1)}=\min\left\{\tfrac{\sqrt{\beta_{n}^{(k-1)}}}{K_{n}^{(k-1)}},\frac{1}{2}\varepsilon_{n}^{(k-2)}\right\},
\]
where 
\[
    K_{n}^{(k-1)}=\sup_s|\kappa_{n}^{(k-1)}(s)|,
\]
and $\kappa_{n}^{(k-1)}(s)$ is the curvature of $\gamma_{n}^{(k-1)}$. By appropriately choosing $\alpha_{i}^{(k-1)}\in[1,2)$ as in the previous step, we can partition $\sigma_{i}^{(k-1)}$ into subarcs $\sigma_{i_{m}}^{(k-1)}$ such that
\begin{align}\label{length_sigma_im_k-1}
     \ell\big(\sigma_{i_m}^{(k-1)}\big)=\alpha_{i}^{(k-1)}\varepsilon_{n}^{(k-1)}, \quad m=1,\cdots, N_i^{(k-1)}.
\end{align}
For each normalized arc $\ell\big(\sigma_{i_m}^{(k-1)}\big)^{-1}\cdot\sigma_{i_m}^{(k-1)}$, embed $f_{\sqrt{\beta_n^{(k-1)}}}$ into its local coordinate, and denote the resulting curve by $\Lambda_{i_{m}}^{(k-1)}$. Rescaling $\Lambda_{i_{m}}^{(k-1)}$ back by the factor $\ell\big(\sigma_{i_m}^{(k-1)}\big)$ yields the curve
\[
    \lambda_{i_ m}^{(k-1)}=\ell\big(\sigma_{i_m}^{(k-1)}\big)\cdot\Lambda_{i_{m}}^{(k-1)}.
\]
Then define
\[
    \gamma_n^{(k)}\coloneqq\bigcup_{i,\,m}\lambda_{i_m}^{(k-1)}.
\]

The curve $\gamma$ for Theorem \ref{main1} is defined by
\begin{align*}
    \gamma\coloneqq\left( \bigcup_{n=1}^{\infty} \gamma_{n}^{(n)} \right) \ \cup \ C_{1} \ \cup \ C_{2} \ \cup \ C_{3},
\end{align*}
where the components are defined by
\begin{align*}
    & C_{1}\coloneqq\left\{z:\left| z+\tfrac{i}{8}\right|=\tfrac{1}{8},\operatorname{Re} z\leq 0\right\},\\
    & C_{2}\coloneqq\left\{z:\operatorname{Im} z=-\tfrac{1}{4}, 0\leq\operatorname{Re} z\leq 1\right\},\\
    & C_{3}\coloneqq\left\{z:\left| z-\left(1-\tfrac{i}{8}\right)\right| =\tfrac{1}{8},\operatorname{Re} z\geq 1\right\}.
\end{align*}
See Figure~\ref{gamma} below.

\begin{figure}[htp]
 \centering
 \includegraphics[width=10cm]{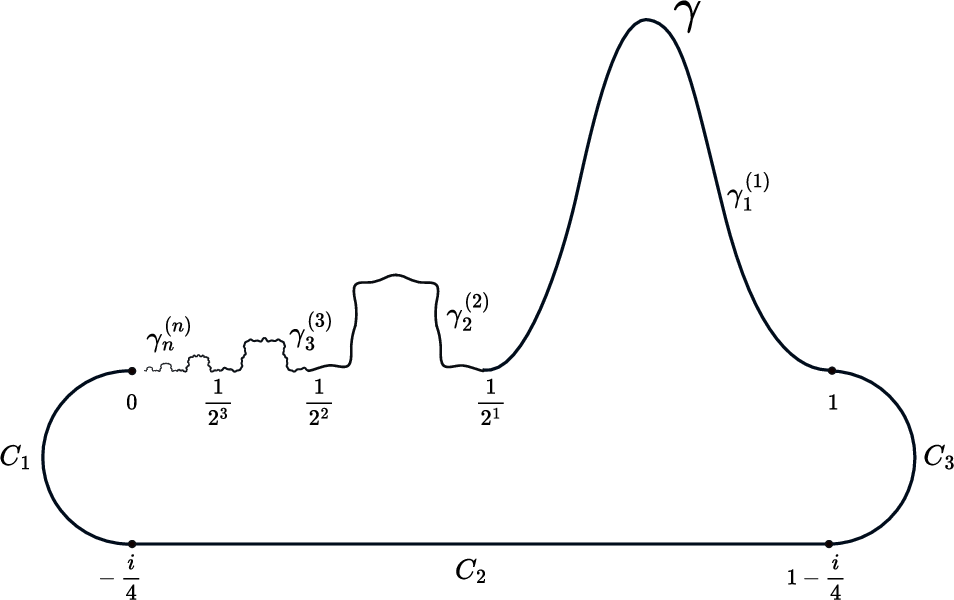}
%\captionsetup{justification=centering}
 \caption{Construction of the curve $\gamma$}
 \label{gamma}
\end{figure}

\section{Verification of the Example Curve}\label{verf}
In this section, we prove that the curve constructed in the previous section serves as an example that an asymptotically conformal chord-arc curve
is not necessarily asymptotically smooth. This implies Theorem~\ref{main1}. In the first subsection, we prepare lemmas for the proof in the second subsection.

\subsection{Lemmas}

\begin{lemma}\label{lengthest_of_embeded_curve}
    Let $\Gamma$ be a $C^2$ planar curve of length $1$, parametrized by arc length $\Gamma(s)$ for $s \in [0,1]$. Moreover, suppose that its curvature $\kappa(s)$ satisfies
    \[
        -1 < \kappa(s) < 0, \quad  s \in [0,1].
    \]  
    Let $\tilde{f}_h$ be the curve obtained by embedding $f_h$ into the local coordinate system of $\Gamma.$ Then, for all sufficiently small $h > 0$, the length $\ell\big(\tilde{f}_h\big)$ satisfies 
    \[
        1 + h^2 \leq \ell\big(\tilde{f}_h\big) \leq 1 + 4h^2 + Kh,
    \]
    where $K = \sup_{s \in [0,1]}|\kappa(s)|$.
\end{lemma}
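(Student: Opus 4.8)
The plan is to reduce the statement to the scalar integral
\[
\ell\big(\tilde f_h\big)=\int_0^1\big\|\dot{\tilde f}_h(s)\big\|\,ds
\]
and to estimate it directly from the explicit speed formula~\eqref{tangent}, keeping the terms linear in $h$ carefully separated from those that are quadratic. Write $A(s)=1-\kappa(s)f_h(s)$ and $B(s)=\dot f_h(s)$, so that $\big\|\dot{\tilde f}_h(s)\big\|=\sqrt{A(s)^2+B(s)^2}$. The sign hypothesis $\kappa(s)<0$ together with $f_h\ge0$ gives the crucial positivity $A(s)\ge1$, which is what makes both one-sided bounds clean. The first step is to record the elementary data for $f_h(s)=h\sin^2(\pi s)$, namely $\dot f_h(s)=\pi h\sin(2\pi s)$, the pointwise bounds $0\le f_h\le h$ and $|\dot f_h|\le\pi h$, and the moments $\int_0^1 f_h(s)\,ds=h/2$, $\int_0^1\dot f_h(s)^2\,ds=\pi^2h^2/2$, and $\int_0^1\dot f_h(s)^4\,ds=3\pi^4h^4/8$.

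For the upper bound I would apply the elementary inequality $\sqrt{A^2+B^2}\le A+\tfrac{B^2}{2A}$ (valid for $A>0$) and then use $A(s)\ge1$ to replace $\tfrac{B^2}{2A}$ by $\tfrac{B^2}{2}$; integrating the pointwise estimate $\big\|\dot{\tilde f}_h(s)\big\|\le 1-\kappa(s)f_h(s)+\tfrac12\dot f_h(s)^2$ over $[0,1]$ yields
\[
\ell\big(\tilde f_h\big)\le 1+\int_0^1|\kappa(s)|f_h(s)\,ds+\tfrac12\int_0^1\dot f_h(s)^2\,ds\le 1+\tfrac{K}{2}h+\tfrac{\pi^2}{4}h^2,
\]
and since $\tfrac{K}{2}\le K$ and $\tfrac{\pi^2}{4}<4$ this is $\le 1+Kh+4h^2$, with room to spare. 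The only contribution linear in $h$ is the curvature cross-term $-2\kappa f_h$, and that is precisely the source of the $Kh$ in the statement.

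For the lower bound I would discard the curvature contribution in the favorable direction: $A(s)\ge1$ gives $\big\|\dot{\tilde f}_h(s)\big\|\ge\sqrt{1+\dot f_h(s)^2}$. Applying the convexity estimate $\sqrt{1+t}\ge 1+\tfrac t2-\tfrac{t^2}{8}$, valid for all $t\ge0$ (the second derivative of $\sqrt{1+t}-1-\tfrac t2+\tfrac{t^2}{8}$ is nonnegative and the function vanishes to first order at $t=0$), with $t=\dot f_h(s)^2$ and integrating gives
\[
\ell\big(\tilde f_h\big)\ge 1+\tfrac12\int_0^1\dot f_h(s)^2\,ds-\tfrac18\int_0^1\dot f_h(s)^4\,ds=1+\tfrac{\pi^2}{4}h^2-\tfrac{3\pi^4}{64}h^4\ge 1+h^2
\]
for all sufficiently small $h>0$, since the leading coefficient $\pi^2/4$ strictly exceeds $1$ and dominates the $O(h^4)$ correction.

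There is no deep obstacle here; the care lies entirely in the bookkeeping of orders of $h$: one must keep the linear-in-$h$ contribution confined to the curvature cross-term and bounded by $Kh$, and then check that the $h^2$-coefficients land on the correct side of the claimed inequalities ($\pi^2/4<4$ for the upper bound, $\pi^2/4>1$ for the lower bound, so that the $+h^2$ survives for small $h$). Using the sharp split $\sqrt{A^2+B^2}\le A+\tfrac{B^2}{2A}$ rather than a single crude estimate of the form $\sqrt{1+\mathrm{const}\cdot h}\le 1+\mathrm{const}\cdot h$ is what isolates the $Kh$ term cleanly and prevents the constants from being spoiled.
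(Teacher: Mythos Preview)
Your proof is correct and follows the same overall strategy as the paper---reduce to the speed integral from~\eqref{tangent}, exploit the sign condition $\kappa<0$ to get $A(s)=1-\kappa(s)f_h(s)\ge1$, and separate the curvature cross-term (linear in $h$) from the $\dot f_h^2$ contribution (quadratic in $h$). The execution differs: the paper Taylor-expands $\sqrt{1+x}$ about $x=0$ and carries an $O(h^3)$ remainder through both bounds, whereas you use the exact algebraic inequalities $\sqrt{A^2+B^2}\le A+\tfrac{B^2}{2A}$ and $\sqrt{1+t}\ge 1+\tfrac t2-\tfrac{t^2}{8}$. Your route is slightly more elementary and self-contained, since every constant is explicit and no remainder estimate is needed; the paper's Taylor approach is a bit quicker to write but leaves the reader to check that the implied constants in the $O(h^3)$ term are harmless. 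Either way the arithmetic lands in the same place: $\tfrac{\pi^2}{4}h^2$ as the main quadratic term, with $\tfrac{\pi^2}{4}>1$ giving the lower bound and $\tfrac{\pi^2}{4}<4$ (plus the $\tfrac{K}{2}h\le Kh$ curvature term) giving the upper bound.
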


\begin{proof}
By applying a Taylor expansion of 
the formula $\big\| \dot{\tilde f}_{h}(s) \big\|$ 
in \eqref{tangent} around $h=0$, and using the Frenet coordinate expression for $\tilde{f}_h$, we obtain
\begin{align*}
    \ell\big(\tilde{f}_{h}\big)=
     &\int_{0}^{1}\big \| \dot{\tilde f}_{h} (s)\big\| d s\\
    =&\int_0^1\sqrt{1+\big(\dot{f}_{h}(s)\big)^2-2\,\kappa(s) f_h(s)+\kappa^2(s) f_h^2(s)}ds\\
    =&1+\int_0^1\left(\tfrac{1}{2}\big(\dot{f}_{h}(s)\big)^2-\kappa(s) f_{h}(s)+\tfrac{1}{2} \kappa^2(s) f_h^2(s)\right) d s+O\left(h^3\right),
\end{align*}
where the $O(h^3)$ term follows from the fact that $f_h$ and $\dot{f}_h$ are of order $O(h)$.

Since $-\kappa(s) f_h(s)$ is nonnegative by the assumption on $\kappa$, and $\kappa^2(s) f_h^2(s)$ is positive, we may drop these terms to obtain
\begin{align*}
    \ell\big(\tilde{f}_{h}\big)\geq 1+\frac{1}{2}\int_{0}^{1}\big(\dot{f}_{h} (s)\big)^{2} d s+O\left(h^{3}\right)=1+\frac{\pi^{2}}{4} h^{2}+O\left(h^{3}\right)\geq 1+h^2
\end{align*}
for all sufficiently small $h$.

On the other hand, using the uniform curvature bound $K$, we have
\begin{align*}
    \ell\big(\tilde{f}_h\big)
    &\leq 1+\frac{1}{2}\int_0^1\big(\dot{f}_{h}(s)\big)^2ds+K\int_0^1 f_h(s) ds+\frac{1}{2}\int_0^1 f_h^2(s) ds+O\left(h^3\right)\\
    &=1+\tfrac{\pi^2}{4} h^2+\tfrac{K}{2} h+\tfrac{3}{16} h^2+O\left(h^3\right)\\
    &\leq 1+4 h^2+K h
\end{align*}
for sufficiently small $h$. This yields the desired two-sided estimate.
\end{proof}

We define the \emph{maximum deviation} of a curve $\Gamma_1$ from another curve $\Gamma_2$ as
\[
    D\left(\Gamma_1,\Gamma_2\right)\coloneqq \sup_{x\in\Gamma_1}\ \operatorname{dist}\left(x,\Gamma_2\right),
\]
where $\operatorname{dist}(y, \Gamma)\coloneqq  \inf_{z \in \Gamma} |y - z|$ is the Euclidean distance from $y$ to $\Gamma$. 

\begin{lemma}\label{dist_from_fh_to _gamma}
    Let $\Gamma$ be a $C^2$ planar curve of length $1$, parametrized by arc length $\Gamma(s)$ for $s \in [0,1]$. Let $\tilde{f}_h$ be the curve obtained by embedding $f_h$ into the local coordinate system of $\Gamma.$ Then, the maximum deviation of $\tilde{f}_{h}$ from $\Gamma$ satisfies 
    \[
        D\big(\tilde{f}_{h}, \Gamma\big ) \leq h
    \]
    for all sufficiently small $h > 0$.
\end{lemma}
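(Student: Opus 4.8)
The plan is to exploit the fact that the embedding places each point of $\tilde f_h$ directly above its footpoint on $\Gamma$ along the Frenet normal direction, so the distance to $\Gamma$ is controlled by a single explicit length, namely $|f_h(s)|$.

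First I would fix an arbitrary $s\in[0,1]$ and observe that the point $\tilde f_h(s)=\Gamma(s)+f_h(s)N(s)$ lies at Euclidean distance exactly $|f_h(s)N(s)|=f_h(s)$ from the point $\Gamma(s)\in\Gamma$, since $N(s)$ is a unit vector and $f_h(s)=h\sin^2(\pi s)\ge 0$. Hence $\operatorname{dist}\big(\tilde f_h(s),\Gamma\big)\le f_h(s)=h\sin^2(\pi s)\le h$, the last inequality because $\sin^2(\pi s)\le 1$ for $s\in[0,1]$. Since every point of the curve $\tilde f_h$ is of the form $\tilde f_h(s)$ for some $s\in[0,1]$, taking the supremum over $s$ yields $D\big(\tilde f_h,\Gamma\big)=\sup_{x\in\tilde f_h}\operatorname{dist}(x,\Gamma)\le h$.

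The estimate itself requires no smallness of $h$; the hypothesis ``for all sufficiently small $h>0$'' is only there to guarantee that $\tilde f_h$ is a genuine simple arc contained in a tubular neighborhood of $\Gamma$ on which the Frenet coordinates $(s,u)\mapsto\Gamma(s)+uN(s)$ are a diffeomorphism, which holds once $h<1/K$ with $K=\sup_s|\kappa(s)|$, so that the normal segments of length $h$ issuing from $\Gamma$ do not cross. I expect essentially no obstacle in the argument beyond recording this crude one-sided bound: any sharper control on $\operatorname{dist}(\tilde f_h(s),\Gamma)$, which would bring in the curvature through a comparison with the osculating circle, is unnecessary for the lemma as stated and for its later use.
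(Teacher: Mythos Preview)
Your proof is correct and follows essentially the same approach as the paper: both bound $\operatorname{dist}(\tilde f_h(s),\Gamma)$ by the distance to the single witness point $\Gamma(s)$, which equals $|f_h(s)|\le h$, and then take the supremum over $s$. Your version is in fact slightly more direct, since the paper inserts an unnecessary triangle-inequality step (and even writes $|\Gamma(t)-\Gamma(s)|=|t-s|$, which should be an inequality) before arriving at the same estimate.
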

\begin{proof}
For any $t \in [0,1]$, the distance from $\tilde{f}_h(t)$ to $\Gamma$ is defined by 
\[
    \operatorname{dist}\big(\tilde{f}_h(t), \Gamma\big) = \inf_{s \in [0,1]} \big|\tilde{f}_h(t) - \Gamma(s) \big|.  
\]  
Substituting $\tilde{f}_h(t) = \Gamma(t) + f_h(t) N(t)$ and applying the triangle inequality, we obtain
\[
    \big|\tilde{f}_h(t) - \Gamma(s) \big|\leq | \Gamma(t) - \Gamma(s)| + |f_h(t)| \cdot | N(t)|
.  
\]  
Since $\Gamma$ is parametrized by arc length, $|\Gamma(t) - \Gamma(s)| =| t - s |$. Thus,  
\begin{align}\label{eq: |fh-gamma|}
    \big|\tilde{f}_h(t) - \Gamma(s) \big|\leq |t - s| + |f_h(t)|.
\end{align}
Taking the infimum  of inequality \eqref{eq: |fh-gamma|} over $s \in [0,1]$, the minimum occurs when $s = t$, yielding
\begin{align}\label{eq: dist<h}
    \operatorname{dist}\big(\tilde{f}_h(t), \Gamma\big) \leq |f_h(t)|\leq h.  
\end{align}
Finally, taking the supremum of inequality \eqref{eq: dist<h} over $t \in [0,1]$, giving  
\begin{align*}
    D\big(\tilde{f}_{h}, \Gamma\big) = \sup_{t \in [0,1]} \operatorname{dist}\big(\tilde{f}_{h}(t), \Gamma\big) \leq h. 
\end{align*}
This proves the statement.
\end{proof}

\begin{claim}\label{D_gammak_to_gamman}
    The maximum deviation of $\gamma_n^{(n)}$ from $\gamma_n^{(k)}$ satisfies
    \[
        D\big(\gamma_n^{(n)},\gamma_n^{(k)}\big)\leq \frac{4\varepsilon_{n}^{(k)}}{\sqrt{n}}.
    \]
\end{claim}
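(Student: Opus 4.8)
The plan is to telescope along the finite chain $\gamma_n^{(k)},\gamma_n^{(k+1)},\dots,\gamma_n^{(n)}$, control the deviation introduced by a single step $\gamma_n^{(j)}\to\gamma_n^{(j+1)}$, and sum a geometric series. First I would record that the deviation functional obeys the one-sided triangle inequality $D(\Gamma_1,\Gamma_3)\le D(\Gamma_1,\Gamma_2)+D(\Gamma_2,\Gamma_3)$: given $x\in\Gamma_1$, pick $y\in\Gamma_2$ with $|x-y|=\operatorname{dist}(x,\Gamma_2)$, so that $\operatorname{dist}(x,\Gamma_3)\le|x-y|+\operatorname{dist}(y,\Gamma_3)\le D(\Gamma_1,\Gamma_2)+D(\Gamma_2,\Gamma_3)$, and take the supremum over $x$. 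Iterating this,
$$
D\big(\gamma_n^{(n)},\gamma_n^{(k)}\big)\le\sum_{j=k}^{n-1}D\big(\gamma_n^{(j+1)},\gamma_n^{(j)}\big),
$$
so it remains to estimate each consecutive deviation.

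For the single-step bound I would use the construction directly: $\gamma_n^{(j+1)}=\bigcup_{i,m}\lambda_{i_m}^{(j)}$ with $\lambda_{i_m}^{(j)}=\ell\big(\sigma_{i_m}^{(j)}\big)\cdot\Lambda_{i_m}^{(j)}$, where $\Lambda_{i_m}^{(j)}$ is $f_{\sqrt{\beta_n^{(j)}}}$ embedded into the local Frenet coordinate of the unit-length arc $\ell\big(\sigma_{i_m}^{(j)}\big)^{-1}\cdot\sigma_{i_m}^{(j)}$. Applying Lemma~\ref{dist_from_fh_to _gamma} to that normalized arc with height $h=\sqrt{\beta_n^{(j)}}$ gives $D\big(\Lambda_{i_m}^{(j)}, \ell(\sigma_{i_m}^{(j)})^{-1}\cdot\sigma_{i_m}^{(j)}\big)\le\sqrt{\beta_n^{(j)}}$. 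Since $D$ scales linearly under a homothety and $\ell\big(\sigma_{i_m}^{(j)}\big)=\alpha_i^{(j)}\varepsilon_n^{(j)}<2\varepsilon_n^{(j)}$, scaling back by $\ell\big(\sigma_{i_m}^{(j)}\big)$ shows that every point of $\lambda_{i_m}^{(j)}$ lies within distance $2\varepsilon_n^{(j)}\sqrt{\beta_n^{(j)}}$ of $\sigma_{i_m}^{(j)}\subset\gamma_n^{(j)}$. Taking the supremum over $i,m$ yields $D\big(\gamma_n^{(j+1)},\gamma_n^{(j)}\big)\le2\varepsilon_n^{(j)}\sqrt{\beta_n^{(j)}}$.

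To finish I would sum using two facts built into the construction: since $\beta_n^{(j)}=j/n^2$ we have $\sqrt{\beta_n^{(j)}}=\sqrt j/n\le 1/\sqrt n$ for $j\le n$, and since $\varepsilon_n^{(j)}\le\tfrac12\varepsilon_n^{(j-1)}$ we have $\varepsilon_n^{(j)}\le2^{-(j-k)}\varepsilon_n^{(k)}$ for $j\ge k$. Hence
$$
\sum_{j=k}^{n-1}D\big(\gamma_n^{(j+1)},\gamma_n^{(j)}\big)\le\frac{2}{\sqrt n}\sum_{j=k}^{n-1}\varepsilon_n^{(j)}\le\frac{2\varepsilon_n^{(k)}}{\sqrt n}\sum_{l=0}^{\infty}2^{-l}=\frac{4\varepsilon_n^{(k)}}{\sqrt n},
$$
which is the assertion.

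The main obstacle is not the arithmetic but verifying that Lemma~\ref{dist_from_fh_to _gamma} is legitimately invoked at every stage: it requires the base curve to be $C^2$, of unit length, parametrized by arc length, and its ``for sufficiently small $h$'' clause implicitly demands that the Frenet tube be embedded, i.e. the base curvature small relative to $h$. This is precisely what the definition of $\varepsilon_n^{(j)}$ secures, since $\varepsilon_n^{(j)}\le\sqrt{\beta_n^{(j)}}/K_n^{(j)}$ forces the curvature of the normalized arc $\ell(\sigma_{i_m}^{(j)})^{-1}\cdot\sigma_{i_m}^{(j)}$ to be at most $\ell(\sigma_{i_m}^{(j)})K_n^{(j)}<2\sqrt{\beta_n^{(j)}}$, so that the graph height $h=\sqrt{\beta_n^{(j)}}$ and the normalized curvature are comparable and of order $O(1/\sqrt n)$. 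I would also note explicitly that reversing the orientation of a subarc leaves its point set and all deviations unchanged, and that $\sigma_{i_m}^{(j)}\subset\gamma_n^{(j)}$, so distances to $\gamma_n^{(j)}$ are bounded by distances to $\sigma_{i_m}^{(j)}$.
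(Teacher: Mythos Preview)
Your proof is correct and follows essentially the same approach as the paper: telescope via the triangle inequality for $D$, bound each consecutive step $D(\gamma_n^{(j+1)},\gamma_n^{(j)})\le 2\varepsilon_n^{(j)}\sqrt{\beta_n^{(j)}}$ using Lemma~\ref{dist_from_fh_to _gamma} and the scaling of $D$, then sum the geometric series using $\varepsilon_n^{(j)}\le 2^{-(j-k)}\varepsilon_n^{(k)}$ and $\sqrt{\beta_n^{(j)}}\le 1/\sqrt n$. Your additional remarks justifying the triangle inequality for $D$ and the applicability of Lemma~\ref{dist_from_fh_to _gamma} (via the curvature bound built into $\varepsilon_n^{(j)}$) are welcome refinements but do not change the structure of the argument.
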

\begin{proof}
   By Lemma~\ref{dist_from_fh_to _gamma} and the definition of the maximum deviation, we obtain
\begin{align}\label{eq: gnk}
    D\big(\gamma_{n}^{(k)},\gamma_{n}^{(k-1)}\big)
    &=\sup_{i,\, m}\ D\big(\lambda_{i_m}^{(k-1)},\sigma_{i_m}^{(k-1)}\big)\notag\\
    &\leq\sup_{i,\, m} \ \ell\big(\sigma_{i_m}^{(k-1)}\big)D\Big(\Lambda_{i_m}^{(k-1)},\ell\big(\sigma_{i_m}^{(k-1)}\big)^{-1}\cdot\sigma_{i_m}^{(k-1)}\Big)\notag\\
    &\leq\sup_{i}\ell\big(\sigma_{i_m}^{(k-1)}\big)\sqrt{\beta_n^{(k-1)}}\leq 2\varepsilon_n^{(k-1)}\sqrt{\beta_n^{(k-1)}},
 \end{align}
where the last inequality follows from equality \eqref{length_sigma_im_k-1}.

From the definition of $\varepsilon_{n}^{(j)}$, we have
\[
    \varepsilon_{n}^{(j)}\leq\frac{1}{2^{j-k}}\varepsilon_{n}^{(k)},\quad j\geq k+1.
\]
The above estimate, together with inequality \eqref{eq: gnk}, implies that
\begin{align*}
    D\big(\gamma_n^{(n)},\gamma_n^{(k)}\big)
    &\leq\sum_{j=k+1}^n D\big(\gamma_n^{(j)},\gamma_n^{(j-1)}\big)\leq 2\sum_{j=k+1}^n\varepsilon_n^{(j-1)}\sqrt{\beta_n^{(j-1)}}\\
    &=2\sum_{j=k}^{n-1}\varepsilon_n^{(j)}\sqrt{\beta_n^{(j)}}\leq 2\sum_{j=k}^{n-1}\frac{1}{2^{j-k}}\sqrt{\beta_n^{(j)}}\varepsilon_n^{(k)}\leq\frac{4\varepsilon_n^{(k)}}{\sqrt{n}},
\end{align*}
where in the last inequality we use the bound $\beta_n^{(j)}\leq \frac{1}{n}$.
\end{proof}

\begin{claim}\label{projection}
   Let $x_{1}^{(k)},\,x_{2}^{(k)}$ be two points on the curve $\lambda_{i_m}^{(k)}$, and let $x_{1}^{(k-1)},\,x_{2}^{(k-1)}$ be their projections on the curve $\sigma_{i_m}^{(k)}$ for $1 \leq k \leq n-1$. Then, for sufficiently large $n$, the ratio of arc lengths satisfies
    \[
    \frac{\ell\big(\wideparen{{x_{1}^{(k)}x_{2}^{(k)}}}\big)}{\ell\big(\wideparen{x_{1}^{(k-1)}x_{2}^{(k-1)}}\big)}\leq 1+8\beta_{n}^{(k)}.
    \]
\end{claim}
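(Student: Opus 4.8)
The plan is to reduce the claim, by scale invariance of ratios of arc lengths, to one pointwise upper bound for the speed of $f_h$ embedded into the Frenet frame of the \emph{normalized} arc, where $h=\sqrt{\beta_n^{(k)}}$. I would first write $\lambda_{i_m}^{(k)}=\ell\big(\sigma_{i_m}^{(k)}\big)\cdot\Lambda_{i_m}^{(k)}$ and $\sigma_{i_m}^{(k)}=\ell\big(\sigma_{i_m}^{(k)}\big)\cdot\widehat\sigma$, where $\widehat\sigma\coloneqq\ell\big(\sigma_{i_m}^{(k)}\big)^{-1}\cdot\sigma_{i_m}^{(k)}$ has length $1$. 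Since a homothety preserves ratios of lengths, the quotient $\ell\big(\wideparen{x_1^{(k)}x_2^{(k)}}\big)/\ell\big(\wideparen{x_1^{(k-1)}x_2^{(k-1)}}\big)$ equals the corresponding quotient for $\Lambda_{i_m}^{(k)}$ over $\widehat\sigma$. Parametrizing $\widehat\sigma$ by arc length $s\in[0,1]$ and letting $s_1<s_2$ be the abscissae of the two chosen points in this Frenet frame, that quotient is $\big(\int_{s_1}^{s_2}\|\dot{\tilde f}_h(s)\|\,ds\big)/(s_2-s_1)\le\sup_{s\in[0,1]}\|\dot{\tilde f}_h(s)\|$, so it is enough to bound this supremum by $1+8\beta_n^{(k)}$.

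To bound the supremum I would first control the curvature $\tilde\kappa$ of $\widehat\sigma$. Scaling a curve by a factor $c$ multiplies its curvature by $1/c$, and $\ell\big(\sigma_{i_m}^{(k)}\big)=\alpha_i^{(k)}\varepsilon_n^{(k)}$ with $\alpha_i^{(k)}\in[1,2)$, whence
\[
    \sup_s|\tilde\kappa(s)|\le\ell\big(\sigma_{i_m}^{(k)}\big)\,K_n^{(k)}<2\varepsilon_n^{(k)}K_n^{(k)}\le2\sqrt{\beta_n^{(k)}}=2h
\]
by the defining inequality $\varepsilon_n^{(k)}\le\sqrt{\beta_n^{(k)}}/K_n^{(k)}$. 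Substituting this together with $|f_h(s)|\le h$ and $|\dot f_h(s)|=\pi h|\sin(2\pi s)|\le\pi h$ into \eqref{tangent} gives
\[
    \|\dot{\tilde f}_h(s)\|^2=\big(1-\tilde\kappa(s)f_h(s)\big)^2+\big(\dot f_h(s)\big)^2\le(1+2h^2)^2+\pi^2h^2\le1+(4+\pi^2)h^2+4h^4.
\]
Since $\beta_n^{(k)}=k/n^2\le1/n$, the parameter $h\le n^{-1/2}$ can be made as small as we like once $n$ is large (uniformly in $i$, $m$, $k$), so the right-hand side is at most $1+16h^2$; hence $\|\dot{\tilde f}_h(s)\|\le\sqrt{1+16h^2}\le1+8h^2$, and because $h^2=\beta_n^{(k)}$ this is the asserted bound (with room to spare: the same computation in fact gives ratio $\le1+7\beta_n^{(k)}$).

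The genuinely delicate points here are bookkeeping ones rather than any single estimate. First, one must check that the normal map $(s,u)\mapsto\widehat\sigma(s)+uN(s)$ is injective on the tube where the construction lives, so that the ``projection'' is well defined; this holds because the normal displacement $|f_h(s)|\le h$ stays below the reach $1/\sup_s|\tilde\kappa(s)|\ge1/(2h)$ of $\widehat\sigma$ as soon as $2h^2<1$. Second, one must confirm that every ``$n$ sufficiently large'' threshold appearing above is uniform over $i$, $m$, and $k\le n$ — which it is, since every estimate depends only on $h=\sqrt{\beta_n^{(k)}}\le n^{-1/2}$ and on the universal bound $\alpha_i^{(k)}<2$, never on the individual subarcs. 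Apart from these, the argument is the Taylor-type computation already carried out in Lemma~\ref{lengthest_of_embeded_curve}, now applied uniformly on subintervals rather than on all of $[0,1]$.
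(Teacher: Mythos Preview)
Your proof is correct and follows essentially the same approach as the paper: normalize the arc, bound the curvature of the normalized curve by $2\sqrt{\beta_n^{(k)}}$ via the defining inequality for $\varepsilon_n^{(k)}$, and then control the speed $\|\dot{\tilde f}_h\|$ from \eqref{tangent}. The only cosmetic difference is that the paper Taylor-expands the integrand and integrates over $[t,r]$, whereas you bound the speed pointwise and pass to the supremum---a slight streamlining of the same computation.
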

\begin{proof}
Consider the magnified curves $
\ell\big(\sigma_{i_m}^{(k)}\big)^{-1} \cdot \sigma_{i_m}^{(k)}$ and $\ell\big(\sigma_{i_m}^{(k)}\big)^{-1} \cdot \lambda_{i_m}^{(k)}$, and let $\hat{x}_{i}^{(k)}$ and $\hat{x}_{i}^{(k-1)}$ be the points corresponding to $x_{i}^{(k)}$ and $x_{i}^{(k-1)}$ on these magnified curves, respectively, $i = 1, 2$. Suppose that $\Gamma(s)$ is the arc-length parametrization of $\ell\big(\sigma_{i_m}^{(k)}\big)^{-1} \cdot \sigma_{i_m}^{(k)}$, $s\in[0,1]$ and set
\[
    \hat{x}_{1}^{(k-1)} = \Gamma(t), \quad \hat{x}_{2}^{(k-1)} = \Gamma(r),\quad 0\leq t\leq r\leq 1.
\]
Then the corresponding points on the magnified curve are given by
\[
    \begin{cases}
        \hat{x}_{1}^{(k)} = \Gamma(t) + f_{\sqrt{\beta_n^{(k)}}}(t) \, N(t), \\
        \hat{x}_{2}^{(k)} = \Gamma(r) + f_{\sqrt{\beta_n^{(k)}}}(r) \, N(r).
    \end{cases}
\]
By the definition of $\varepsilon_{n}^{(k)}$, the curvature of the normalized curve $\ell\big(\sigma_{i_m}^{(k)}\big)^{-1} \cdot \sigma_{i_m}^{(k)}$ parametrized by arc length satisfies 
\begin{equation}\label{curvature}
    |\kappa(s)| =\ell\big(\sigma_{i_m}^{(k)})|\kappa_n^{(k)}(s)|
    \leq \ell\big(\sigma_{i_m}^{(k)}) \frac{\sqrt{\beta_{n}^{(k)}}}{\varepsilon_n^{(k)}} \leq 2\sqrt{\beta_{n}^{(k)}}, \quad s\in[0,1].
\end{equation}

By representing $\ell\big(\wideparen{\hat{x}_{1}^{(k)}\hat{x}_{2}^{(k)}}\  \,\big)$ as in \eqref{tangent}
and noting that 
\begin{align}\label{eq: f'f_est}
    \lvert \dot{f}_{\sqrt{\beta_{n}^{(k)}}}(u)\rvert \leq \pi\sqrt{\beta_{n}^{(k)}}, \quad \lvert f_{\sqrt{\beta_{n}^{(k)}}}(u)\rvert \leq \sqrt{\beta_{n}^{(k)}},
\end{align}
a direct computation by the Taylor expansion yields
\begin{align*}
    &\ell\big(\wideparen{\hat{x}_{1}^{(k)}\hat{x}_{2}^{(k)}}\  \,\big)\\
    =&\int_{t}^{r}\sqrt{1+\big(\dot{f}_{\sqrt{\beta_{n}^{(k)}}}(u)\big)^{2}-2\kappa(u)f_{\sqrt{\beta_{n}^{(k)}}}(u)+\kappa^{2}(u)f_{\sqrt{\beta_{n}^{(k)}}}^{2}(u)}\,du\\
    \leq&\int_{t}^{r}\left(1+\frac{1}{2}\big(\dot{f}_{\sqrt{\beta_{n}^{(k)}}}(u)\big)^{2}-\kappa(u)f_{\sqrt{\beta_{n}^{(k)}}}(u)+O\bigl((\beta_{n}^{(k)})^2\bigr)\right)\,du\\
    \leq&(r-t)+\frac{\pi^{2}}{2}\beta_{n}^{(k)}(r-t)+2\beta_{n}^{(k)}(r-t)+O\bigl((\beta_{n}^{(k)})^2\bigr)(r-t)\\
    \leq&(r-t)\big(1+8\beta_{n}^{(k)}\big)
\end{align*}
for sufficiently large $n$. We conclude that
\[
    \frac{\ell\big(\wideparen{x_{1}^{(k)}x_{2}^{(k)}}\big)}{\ell\big(\wideparen{x_{1}^{(k-1)}x_{2}^{(k-1)}}\big)}=\frac{\ell\big(\wideparen{\hat{x}_{1}^{(k)}\hat{x}_{2}^{(k)}}\,\ \big)}{\ell\big(\wideparen{\hat{x}_{1}^{(k-1)}\hat{x}_{2}^{(k-1)}}\ \,\big)}\leq\frac{(r-s)\big(1+8\beta_{n}^{(k)}\big)}{r-s}=1+8\beta_{n}^{(k)}.
\]
This completes the proof.
\end{proof}
\begin{claim}\label{asymconf_gamma1}
    Let $n_{1},\, n_{2} \in \mathbb{N}$, and let $x_{1} \in \gamma_{n_{1}}^{(1)}$, $x_{2} \in \gamma_{n_{2}}^{(1)}$.
    For any point $w$ on the arc $\wideparen{x_{1}x_{2}}$ connecting $x_{1}$ and $x_{2}$, we have
    \[
        \max_{w \in \wideparen{x_{1}x_{2}}}
        \frac{|x_{1} - w| + |w - x_{2}|}{|x_{1} - x_{2}|}
        \leq 1 + \frac{\pi}{\min\{n_{1}, n_{2}\}}.
    \]
\end{claim}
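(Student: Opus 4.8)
The plan is to reduce the statement to the elementary geometry of Lipschitz graphs: each building block $\gamma_n^{(1)}$ is the graph of a function whose slope is at most $\pi/n$, and these blocks fit together into a single simple graph, so the subarc spanning several consecutive blocks is a Lipschitz graph whose Lipschitz constant is governed by the smallest index present. The chord--arc type quotient for such a graph is then bounded by a one-line computation.

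\emph{Step 1.} I would first record the slope bound. Writing the function defining $\gamma_n^{(1)}$ as $g_n(t)=\tfrac{1}{n2^{n}}\sin^{2}\!\bigl(\pi 2^{n}(t-2^{-n})\bigr)$ on $[2^{-n},2^{-(n-1)}]$ and differentiating gives $g_n'(t)=\tfrac{\pi}{n}\sin\!\bigl(2\pi 2^{n}(t-2^{-n})\bigr)$, hence $|g_n'(t)|\le \pi/n$. Since $f_h(0)=f_h(1)=0$, consecutive blocks $\gamma_{n}^{(1)}$ and $\gamma_{n-1}^{(1)}$ meet at the common point $(2^{-(n-1)},0)$ on the horizontal axis, so $\bigcup_{n\ge1}\gamma_n^{(1)}$ is the graph of a continuous, piecewise-$C^1$ function $G$ on $(0,1]$ with $|G'(t)|\le \pi/n$ whenever $t\in[2^{-n},2^{-(n-1)}]$; in particular it is a simple arc.

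\emph{Step 2.} Next, assuming without loss of generality $n_1\le n_2$ and, writing $x_j=(t_j,G(t_j))$, that $t_2\le t_1$, the subarc $\wideparen{x_1x_2}$ of $\bigcup_n\gamma_n^{(1)}$ is precisely the graph of $G$ over $[t_2,t_1]$, so every point $w$ on it has the form $(t,G(t))$ with $t_2\le t\le t_1$. Each such $t$ lies in some interval $[2^{-n},2^{-(n-1)}]$ with $n_1\le n\le n_2$, so, setting $M\coloneqq \pi/\min\{n_1,n_2\}=\pi/n_1$, we have $|G'(t)|\le M$ throughout $[t_2,t_1]$.

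\emph{Step 3.} Finally, integrating the derivative bound gives $|G(t_1)-G(t)|\le M(t_1-t)$ and $|G(t)-G(t_2)|\le M(t-t_2)$, whence $|x_1-w|\le (t_1-t)\sqrt{1+M^2}$ and $|w-x_2|\le (t-t_2)\sqrt{1+M^2}$, so $|x_1-w|+|w-x_2|\le (t_1-t_2)\sqrt{1+M^2}$, while $|x_1-x_2|\ge t_1-t_2$. Dividing and using $\sqrt{1+M^2}\le 1+M$ yields, for every $w\in\wideparen{x_1x_2}$,
\[
\frac{|x_1-w|+|w-x_2|}{|x_1-x_2|}\le 1+M=1+\frac{\pi}{\min\{n_1,n_2\}},
\]
and taking the supremum over $w$ completes the proof. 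I do not expect a genuine obstacle here: the argument is entirely elementary once Step 1 is in place. The two points that require a little care are the exact slope computation and the observation that the blocks $\gamma_n^{(1)}$ glue into one graph (resting on $f_h$ vanishing at both endpoints of $[0,1]$), since this is what makes $\wideparen{x_1x_2}$ project bijectively onto the horizontal interval $[t_2,t_1]$ and hence legitimizes treating it as a Lipschitz graph.
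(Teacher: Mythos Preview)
Your proof is correct and follows essentially the same approach as the paper: both compute the slope bound $|g_n'(t)|\le \pi/n$, observe that the subarc is a Lipschitz graph over a horizontal interval with Lipschitz constant $\pi/\min\{n_1,n_2\}$, and combine the estimate $|x_1-x_2|\ge |t_1-t_2|$ with the bound $\sqrt{1+M^2}\le 1+M$. The only cosmetic difference is that the paper routes the numerator through the arc length, $|x_1-w|+|w-x_2|\le \ell(\wideparen{x_1x_2})\le (1+\pi/n_2)|r-t|$, whereas you bound each chord $|x_1-w|$, $|w-x_2|$ separately; the two are equivalent.
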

\begin{proof}
Without loss of generality, assume $n_{1} \geq n_{2}$, and set
\begin{align*}
     x_{1}&=\left(t, f_{\frac{1}{n_{1}\cdot 2^{n_{1}}}}\left(2^{n_{1}}(t-\tfrac{1}{2^{n_{1}}}\right)\right),\\
     x_{2}&=\left(r, f_{\frac{1}{n_{2}\cdot 2^{n_{2}}}}\left(2^{n_{2}}(r-\tfrac{1}{2^{n_{2}}}\right)\right).
\end{align*}
Observe that 
\begin{align}\label{eq: |f'|_est}
    \sqrt{1+\left(\dot{f}_{\frac{1}{n\cdot 2^{n}}}\left(2^{n}(t-\tfrac{1}{2^{n}})\right)\right)^2}\leq 1+\dot{f}_{\frac{1}{n\cdot 2^{n}}}\left(2^{n}(t-\tfrac{1}{2^{n}})\right)\leq 1+\frac {\pi}{n},
\end{align}
which implies
\begin{align}\label{n1=n2}
    |x_{1}-w| + |w-x_{2}| \leq \ell(\wideparen{x_{1}x_{2}}) \leq \big(1+\frac{\pi}{n_{2}}\big)(r-t).
\end{align}

Since $|x_{1}-x_{2}| \geq |r-t|$, combining with \eqref{eq: |f'|_est} yields
\[
    \frac{|x_{1}-w| + |w-x_{2}|}{|x_{1}-x_{2}|} \leq 1 + \frac{\pi}{n_{2}}.
\]
This yields the statement.
\end{proof}

\begin{claim}\label{slope}
    Let $\Gamma(s) = (X(s), Y(s))$, $s \in [0,1]$, be a $C^2$ convex upward curve parametrized by arc length, with $\Gamma(0) = (0,0)$ and the curvature satisfying $|\kappa(s)|\leq 2\sqrt{\beta_{n}^{(k)}}$. Assume further that
    \begin{enumerate}[\rm(i\rm)]
        \item $X'(s) > 0$ for all $s \in (0,1)$ {\rm;}  
        \item the derivative $(Y \circ X^{-1})'(t) \to 0$ as $t = X(s) \to 0$.   
    \end{enumerate}
    Consider the perturbed curve
    \[
        \Lambda(s) = \Gamma(s) + f_{\sqrt{\beta_{n}^{(k)}}}(s) N(s),
    \]  
    where $N(s)$ is the unit normal vector to $\Gamma$ at $s$. Then, for sufficiently large $n$, $\Lambda$ can be reparametrized as a graph over the $X$-axis
    \[
        \Lambda(t) = (t, \tilde{Y}(t))
    \]  
    such that
    \[
        \tilde{Y}(t) \leq 8\sqrt{\beta_{n}^{(k)}}t.
    \]
\end{claim}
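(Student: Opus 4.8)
The plan is to set $h:=\sqrt{\beta_n^{(k)}}$ (so $h\to 0$ as $n\to\infty$, since $\beta_n^{(k)}=k/n^2\le 1/n$) and to control everything through the total turning of the tangent. Write $T(s)=(\cos\theta(s),\sin\theta(s))$ and $N(s)=(-\sin\theta(s),\cos\theta(s))$; the Frenet--Serret formula gives $\theta'(s)=\kappa(s)$, whence $|\theta(s)-\theta(0)|\le\int_0^1|\kappa|\le 2h$ for every $s$. The first task is to pin down $\theta(0)$. By (i) and continuity, $X'(0)\ge 0$; if $X'(0)=0$ then, by (ii) together with $X'(0)^2+Y'(0)^2=1$, we would also get $Y'(0)=0$, a contradiction, so $X'(0)>0$ and then (ii) forces $Y'(0)/X'(0)=0$, i.e.\ $T(0)=(1,0)$ and $\theta(0)=0$. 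Since a convex upward curve traversed with $X'>0$ has $\kappa\le 0$, the angle $\theta$ is non-increasing, so $\theta(s)\in[-2h,0]$ for all $s$. Consequently $X'(s)=\cos\theta(s)\ge\cos 2h$, $Y'(s)=\sin\theta(s)\le 0$ (hence $Y(s)=\int_0^sY'\le 0$), and $|Y'(s)|\le\sin 2h\le 2h$.

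Next I would verify that $\Lambda$ really is a graph over the $X$-axis. By \eqref{tangent}, $\Lambda'(s)=\bigl(1-\kappa(s)f_h(s)\bigr)T(s)+\dot f_h(s)N(s)$, whose first coordinate is $\tilde X'(s)=\bigl(1-\kappa f_h\bigr)\cos\theta-\dot f_h\sin\theta$. Using $|\kappa f_h|\le 2h^2$, $\cos\theta\ge\cos 2h$, $|\dot f_h|\le\pi h$ (cf.\ \eqref{eq: f'f_est}) and $|\sin\theta|\le 2h$, one gets $\tilde X'(s)\ge(1-2h^2)\cos 2h-2\pi h^2$, which is positive for $n$ large. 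Hence $\tilde X$ is a strictly increasing $C^1$-diffeomorphism of $[0,1]$ onto $[0,\tilde X(1)]$ (note $\tilde X(0)=X(0)-f_h(0)Y'(0)=0$ because $f_h(0)=0$ and $\Gamma(0)=(0,0)$), so $\Lambda$ reparametrizes as $(t,\tilde Y(t))$ for $t\in[0,\tilde X(1)]$.

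For the height bound, set $s=\tilde X^{-1}(t)$, so that $\tilde Y(t)=Y(s)+f_h(s)\cos\theta(s)$ and $t=X(s)-f_h(s)\sin\theta(s)$. Since $Y(s)\le 0$ and $0\le\cos\theta(s)\le 1$, we get $\tilde Y(t)\le f_h(s)=h\sin^2(\pi s)\le 4hs$, where I would invoke the elementary inequality $\sin^2(\pi s)\le 4s$ on $[0,1]$ (immediate, since $s\mapsto 4s-\sin^2\pi s$ vanishes at $0$ and has derivative $4-\pi\sin 2\pi s\ge 4-\pi>0$). On the other hand $\sin\theta(s)\le 0$ and $f_h\ge 0$ give $t=X(s)-f_h(s)\sin\theta(s)\ge X(s)=\int_0^s\cos\theta\ge s\cos 2h\ge s/2$ for $n$ large, so $s\le 2t$ and therefore $\tilde Y(t)\le 4hs\le 8ht=8\sqrt{\beta_n^{(k)}}\,t$, as claimed.

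The only nontrivial point---the hard part---is extracting $T(0)=(1,0)$, and hence the sign $Y(s)\le 0$, from hypothesis (ii): if the tangent at the base point were not exactly horizontal, $Y$ could grow linearly at a rate independent of $h$, and no absolute constant (in particular not $8$) would work. Everything else consists of Taylor/turning estimates of the type already used in Lemma~\ref{lengthest_of_embeded_curve} and Claim~\ref{projection}.
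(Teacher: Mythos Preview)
Your proof is correct. Both arguments share the same opening: write $T(s)=(\cos\theta(s),\sin\theta(s))$, use hypothesis~(ii) to pin down $\theta(0)=0$, and then bound $|\theta(s)|\le 2h$ via the curvature bound; both then check $\tilde X'(s)>0$ so that $\Lambda$ is a graph. The difference lies in the endgame. The paper bounds the \emph{derivative}: it estimates $|\hat Y'(s)|\le 6h$ term by term (using bounds on $Y'$, $X''$, $\dot f_h$), applies the chain rule to get $|\tilde Y'(t)|\le \tfrac{4}{3}|\hat Y'(s)|\le 8h$, and integrates from $\tilde Y(0)=0$. You instead bound the \emph{value} directly: the convexity hypothesis ($\kappa\le 0$) gives $\theta(s)\le 0$, hence $Y'(s)\le 0$ and $Y(s)\le 0$, so $\tilde Y(t)=Y(s)+f_h(s)\cos\theta(s)\le f_h(s)\le 4hs$, and then $t\ge X(s)\ge s\cos 2h\ge s/2$ closes the argument. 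Your route is shorter and makes genuine use of the convexity sign, which the paper's proof never invokes (it uses only $|\kappa|$). In exchange, the paper's derivative bound yields the two-sided estimate $|\tilde Y(t)|\le 8ht$, which is marginally stronger but not needed for the applications in Lemma~\ref{x1x2_in_small_bump} and Theorem~\ref{asymp_conf}.
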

\begin{proof}
The Frenet frame for the curve $\Gamma$ consists of the tangent and normal vectors
\[
    T(s)=(X'(s),Y'(s)), \quad N(s)=(-Y'(s),X'(s)).
\]
The curvature of a planar curve is defined as the rate of change of the tangent angle $ \theta(s)$ with respect to arc length parameter $s$, that is, $\kappa = d\theta/ds$. 

Condition (ii) ensures that
\[
    \frac{Y'(s)}{X'(s)}=(Y\circ X^{-1})'(t)\to 0 \quad \text{as } s\to 0,
\]
so that $\theta(s)\to 0$ as $s\to 0$. Hence
\[
    |\theta(s)|=|\theta(s)-\theta(0)| \leq \int_{0}^{s}|\kappa(t)|\,dt 
    \leq 2\sqrt{\beta_{n}^{(k)}}\,s \leq 2\sqrt{\beta_{n}^{(k)}}.
\]
For sufficiently large $n$, this bound for $\theta$ yields
\begin{align}\label{x'}
    \frac{4}{5}< X'(s)&=\cos\theta(s)\leq 1,\notag\\
    |Y'(s)|&=|\sin\theta(s)|\leq|\theta(s)|\leq 2\sqrt{\beta_{n}^{(k)}}.
\end{align}
Differentiating again and applying the Frenet--Serret formulas give
\[  
    T'(s)=(X''(s),Y''(s))=\kappa(s)N(s)=(-Y'(s)\kappa(s),X'(s)\kappa(s)),
\]
and thus by \eqref{x'} and the assumption $|\kappa(s)|\leq 2\sqrt{\beta_{n}^{(k)}}$, we obtain
\begin{align}\label{x''}
    |X''(s)|&=|Y'(s)||\kappa(s)|\leq 4\beta_{n}^{(k)},\notag\\
    |Y''(s)|&=|X'(s)||\kappa(s)|\leq 2\sqrt{\beta_{n}^{(k)}}.
\end{align}

Now consider the perturbed curve
\begin{align*}
    \Lambda(s)&=\Gamma(s)+f_{\sqrt{\beta_{n}^{(k)}}}(s)N(s)\eqqcolon(\hat{X}(s),\hat{Y}(s)),
\end{align*}
where 
\[
    \hat{X}(s)=X(s)-Y'(s)f_{\sqrt{{\beta_{n}^{(k)}}}}(s), \quad \hat{Y}(s)=Y(s)+X'(s)f_{\sqrt{\beta_{n}^{(k)}}}(s).
\]
Differentiating $\hat{X}$ with respect to $s$, we have
\begin{align}\label{X'}
    \hat{X}'(s)&=X'(s)-Y''(s)f_{\sqrt{\beta_{n}^{(k)}}}(s)-Y'(s)\dot{f}_{\sqrt{\beta_{n}^{(k)}}}(s)\notag\\
    &\geq\frac{4}{5}-2\beta_{n}^{(k)}-2\pi\beta_{n}^{(k)}>\frac{3}{4}.
\end{align}
Thus $\hat{X}'(s)>0$, and $\Lambda$ can indeed be reparametrized by $t=\hat{X}(s)$ as
\[
    \Lambda=\left(t,\hat{Y}(\hat{X}^{-1}(t))\right)\eqqcolon(t,\tilde{Y}(t)).
\]
Applying \eqref{X'}, it follows that
\begin{align}\label{YX^-1}
    | \left(\hat{Y}(\hat{X}^{-1}(t))\right)'\big | =| \hat{Y}'(\hat{X}^{-1}(t))\cdot(\hat{X}^{-1}(t))'| = \left| \frac{\hat{Y}'(s)}{\hat{X}'(s)}\right| \leq\frac{4}{3}|\hat{Y}'(s)|.
\end{align}

Next, estimates \eqref{x'} and \eqref{x''} yield
\begin{align}\label{Y'}
    |\hat{Y}'(s)| &= | Y'(s)+X''(s)f_{\sqrt{\beta_{n}^{(k)}}}(s)+X'(s)\dot{f}_{\sqrt{\beta_{n}^{(k)}}}(s)| \notag\\
    &\leq |Y'(s)|+| X''(s)f_{\sqrt{\beta_{n}^{(k)}}}(s)| +| X'(s)\dot{f}_{\sqrt{\beta_{n}^{(k)}}}(s)|  \notag\\
    &\leq 2\sqrt{\beta_{n}^{(k)}}+4\beta_{n}^{(k)}\sqrt{\beta_{n}^{(k)}}+\pi\sqrt{\beta_{n}^{(k)}}\leq 6\sqrt{\beta_{n}^{(k)}}.
\end{align}
Combining \eqref{YX^-1} with \eqref{Y'}, we conclude
\[
    |\tilde{Y}'(t)|=|\big(\hat{Y}(\hat{X}^{-1}(t))\big)'| \leq 8\sqrt{\beta_{n}^{(k)}}.
\]
Since $\tilde{Y}(0) = 0$, integration gives
\begin{align*}
    \tilde{Y}(t)\leq 8\sqrt{\beta_{n}^{(k)}}t.
\end{align*}
This completes the proof.
\end{proof}

 \bigskip

Recall that 
\[
    \gamma_{n}^{(k)}=\bigcup_{i,\, m}\lambda_{i_m}^{(k-1)},\quad f_{h}(t)=h\cdot\sin^{2}(\pi t),\quad \beta_{n}^{(k)}=\frac{k}{n^{2}}.
\]
We call $\lambda_{i_m}^{(k-1)}$ a \emph{bump} of the curve $\gamma_{n}^{(k)}$. We say $x_{1},\,x_{2} $ lie on \emph{adjacent bumps} of $\gamma_{n}^{(k)}$ if 
\[
    x_{1},\,x_{2} \in \lambda_{i_{m-1}}^{(k-1)} \cup \lambda_{i_{m}}^{(k-1)}\quad \text{or} \quad  x_{1},\,x_{2} \in \lambda_{(i-1)_M}^{(k-1)} \cup \lambda_{i_{1}}^{(k-1)},
\]
   where 
\[
    \sigma_{i_{m-1}}^{(k-1)} \cup \sigma_{i_{m}}^{(k-1)} \quad \text{or} \quad \sigma_{(i-1)_M}^{(k-1)} \cup \sigma_{i_{1}}^{(k-1)}
\]
are adjacent subarcs in the partition of $\gamma_{n}^{(k-1)}$,  $\sigma_{(i-1)_M}^{(k-1)}$ is the last subarc of $\sigma_{{i-1}}^{(k-1)}$ and $\sigma_{i_{1}}^{(k-1)}$ is the first subarc of $\sigma_i^{(k-1)}$, $M=N_{i-1}^{(k-1)}$, $2\leq k \leq n$.
\begin{lemma}\label{x1x2_in_small_bump}
    For any sufficiently large $n$, let $x_1, \, x_2$ lie on adjacent bumps of $\gamma_{n}^{(k)}$ with $2\leq k\leq n$. Then
    \[
        \frac{\ell\big(\wideparen{x_{1}x_{2}}\big)}{|x_{1}-x_{2}|} \leq 1 + \frac{32}{\sqrt{n}}.
    \]
    In particular, the same estimate holds if $x_1$ and $x_2$ lie on the same bump of $\gamma_n^{(k)}$.
\end{lemma}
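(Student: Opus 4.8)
The plan is to reduce the estimate on $\gamma_n^{(k)}$ to an estimate at the bottom level $k=1$ by repeatedly applying Claim \ref{projection}, and then to close the argument using Claim \ref{asymconf_gamma1} together with Claim \ref{slope}. First, I would observe that the two points $x_1,x_2$ on adjacent (or the same) bumps of $\gamma_n^{(k)}$ project, under the map sending $\lambda_{i_m}^{(k-1)}$ back to $\sigma_{i_m}^{(k-1)}$, to two points $x_1^{(k-1)},x_2^{(k-1)}$ lying on the corresponding (adjacent or same) subarcs $\sigma_{i_m}^{(k-1)}$ of $\gamma_n^{(k-1)}$. Iterating this projection $k-1$ times yields points $x_1^{(1)},x_2^{(1)}$ on $\gamma_n^{(1)}$, and by Claim \ref{projection} the arc-length ratio is controlled:
\[
\frac{\ell\big(\wideparen{x_1 x_2}\big)}{\ell\big(\wideparen{x_1^{(1)}x_2^{(1)}}\big)} \leq \prod_{j=2}^{k}\big(1+8\beta_n^{(j)}\big) \leq \prod_{j=1}^{n}\Big(1+\frac{8j}{n^2}\Big) \leq \exp\Big(\frac{8}{n^2}\sum_{j=1}^n j\Big) \leq \exp\Big(\frac{4(n+1)}{n}\Big),
\]
which is bounded but unfortunately only by a constant close to $e^4$, not by $1+O(1/\sqrt n)$. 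So a naive telescoping of Claim \ref{projection} across all levels is too lossy; the refinement I would make is that when $x_1,x_2$ lie on a single bump or two adjacent bumps of $\gamma_n^{(k)}$, their projections down to level $1$ stay within a region of $\gamma_n^{(1)}$ of diameter comparable to a single $\varepsilon_n^{(j)}$-scale arc, so only the top few factors $1+8\beta_n^{(j)}$ near $j=k$ actually contribute; the lower-level bumps are invisible at this scale because $\wideparen{x_1^{(j)}x_2^{(j)}}$ sits inside one subarc $\sigma_{i_m}^{(j)}$, on which $\gamma_n^{(j)}$ and $\gamma_n^{(j-1)}$ differ by a graph perturbation of height $\lesssim \varepsilon_n^{(j)}\sqrt{\beta_n^{(j)}}$ over a base of length $\asymp\varepsilon_n^{(j)}$.

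The technically cleanest route, which I would pursue instead, is to bound $\ell(\wideparen{x_1x_2})$ and $|x_1-x_2|$ directly in terms of geometry at level $k$. For the numerator, the arc $\wideparen{x_1x_2}$ lies on at most two bumps $\lambda_{i_m}^{(k-1)}$, each of which is the graph (after the reparametrization furnished by Claim \ref{slope}) of a function over its base subarc $\sigma_{i_m}^{(k-1)}\subset\gamma_n^{(k-1)}$ with slope $\leq 8\sqrt{\beta_n^{(k-1)}}$; combined with the chord-arc-type bound for $\gamma_n^{(k-1)}$ restricted to two adjacent subarcs (which in turn reduces downward), this gives $\ell(\wideparen{x_1x_2}) \leq (1+c\sqrt{\beta_n^{(k-1)}})\,L$ where $L$ is the base length. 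For the denominator, I would use Claim \ref{slope} again: writing $x_1,x_2$ in the graph coordinate $(t,\tilde Y(t))$ over the essentially straight base direction, $|x_1-x_2| \geq |t_1-t_2| \geq (1-c\sqrt{\beta_n^{(k-1)}})L$, because the base subarcs of $\gamma_n^{(k-1)}$, having curvature $\leq 2\sqrt{\beta_n^{(k-1)}}$ over length $\leq 2\varepsilon_n^{(k-1)}$, deviate from a straight segment by a relative amount $O(\beta_n^{(k-1)})$. Dividing, and using $\beta_n^{(k-1)} = (k-1)/n^2 \leq 1/n$ so that $\sqrt{\beta_n^{(k-1)}} \leq 1/\sqrt n$, yields $\ell(\wideparen{x_1x_2})/|x_1-x_2| \leq 1 + C/\sqrt n$, and tracking the constants through the three applications (two of Claim \ref{slope}, one chord-length estimate) gives $C \leq 32$ for $n$ large. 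The base case $k=1$ is exactly Claim \ref{asymconf_gamma1} (with $n_1=n_2=n$ when the points are on one or two adjacent bumps, which here are literally consecutive lobes of a single $f_{1/(n2^n)}$ graph piece), giving the ratio $\leq 1+\pi/n \leq 1+32/\sqrt n$.

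The main obstacle I anticipate is precisely the bookkeeping that prevents the error from accumulating across the $k$ levels: one must argue carefully that when $x_1,x_2$ are confined to one or two adjacent bumps of $\gamma_n^{(k)}$, the relevant base arc at every lower level $j<k$ is a single subarc $\sigma_{i_m}^{(j)}$ (never spanning adjacent subarcs at that lower level), so that the perturbation at level $j$ contributes only through the shape of one graph piece — whose slope and curvature are uniformly $O(\sqrt{\beta_n^{(j)}})$ — rather than through a product of $j$-many factors. Making this "localization" precise requires checking that the partition scales $\varepsilon_n^{(j)}$ are rapidly decreasing (indeed $\varepsilon_n^{(j)} \leq 2^{-1}\varepsilon_n^{(j-1)}$ by construction), so that a bump at level $k$ is geometrically tiny compared to a subarc $\sigma^{(j)}$ for $j < k$; this is where the dyadic definition of $\varepsilon_n^{(j)}$ does the essential work, and the rest is the routine Taylor-expansion estimates already rehearsed in Claims \ref{projection} and \ref{slope}.
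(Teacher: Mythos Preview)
Your second paragraph has the right instinct --- estimate $\ell(\wideparen{x_1x_2})$ and $|x_1-x_2|$ directly at level $k$ --- and this is exactly what the paper does. But the paper's execution differs from your plan in two material ways, and your third paragraph chases a non-issue.

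First, the paper does not introduce an external ``horizontal'' direction or invoke Claim~\ref{slope} to obtain a global graph coordinate. It works entirely in the Frenet coordinate of the rescaled base arc: after normalizing $\sigma_{i_{m-1}}^{(k)}\cup\sigma_{i_m}^{(k)}$ to have arc-length parametrization $\Gamma(s)$ on $[0,2]$, the bump is literally $\tilde x(s)=\Gamma(s)+f_{\sqrt{\beta_n^{(k)}}}(s)N(s)$, and both the arc-length integral (via formula~\eqref{tangent} and Taylor expansion) and the chord $|\tilde x_1-\tilde x_2|$ (via triangle inequality plus the Lipschitz bounds $|\dot f_{\sqrt{\beta}}|\le\pi\sqrt\beta$, $|N(r)-N(t)|\le|\kappa|(r-t)$) are estimated directly in terms of the parameter difference $r-t$. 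The only input about the base is the curvature bound $|\kappa|\le 2\sqrt{\beta_n^{(k)}}$ coming from the definition of $\varepsilon_n^{(k)}$; nothing about $\gamma_n^{(j)}$ for $j<k$ enters, so there is \emph{no recursion} and your localization worry is a red herring.

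Second, you miss the case split that drives the constant $32$. Adjacent bumps may lie over the same convex piece $\sigma_i^{(k)}$ (the easy case, yielding $\le 1+8/\sqrt n$), or they may straddle an inflection point as $\lambda_{(i-1)_M}^{(k)}\cup\lambda_{i_1}^{(k)}$. In the latter case the normal direction flips sign and the scaling factors $\alpha_{i-1}^{(k)},\alpha_i^{(k)}$ differ, so the second point is $\Gamma(r)-\tfrac{\alpha_i}{\alpha_{i-1}}f_{\sqrt\beta}\bigl(\tfrac{\alpha_{i-1}}{\alpha_i}(r-1)\bigr)N(r)$. The chord estimate now requires controlling three separate error terms $I_1,I_2,I_3$, and it is only here that Claim~\ref{slope} is invoked (to get $f_{\sqrt\beta}(t)\le 8\sqrt\beta\,t$ for the $I_1$ bound). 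This case is where the bulk of the work lies and where the constant degrades to $32$; your proposal does not address it.
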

\begin{proof}
First, assume $x_{1},\,x_{2} \in \lambda_{i_{m-1}}^{(k)} \cup \lambda_{i_{m}}^{(k)}$, and let $\tilde{x}_{1},\tilde{x}_{2}$ be points on $\Lambda_{i_{m-1}}^{(k)}\cup\Lambda_{i_{m}}^{(k)}$ corresponding to $x_{1},\,x_{2}$, respectively (see Figure~\ref{l1l2}). Assume $\Gamma(s)$ is the arc-length parametrization of the curve $\tfrac{1}{\alpha_{i}^{(k)}\varepsilon_{n}^{(k)}}\cdot\big(\sigma_{i_{m-1}}^{(k)}\cup\sigma_{i_{m}}^{(k)}\big)$, $s\in[0,2]$. Set
\[
    \begin{cases}
        \tilde{x}_{1}=\Gamma(t)+f_{\sqrt{\beta_{n}^{(k)}}}(t)N(t),\\
        \tilde{x}_{2}=\Gamma(r)+f_{\sqrt{\beta_{n}^{(k)}}}(r)N(r).
    \end{cases}
\]

\begin{figure}[htp]
 \centering
 \includegraphics[width=8cm]{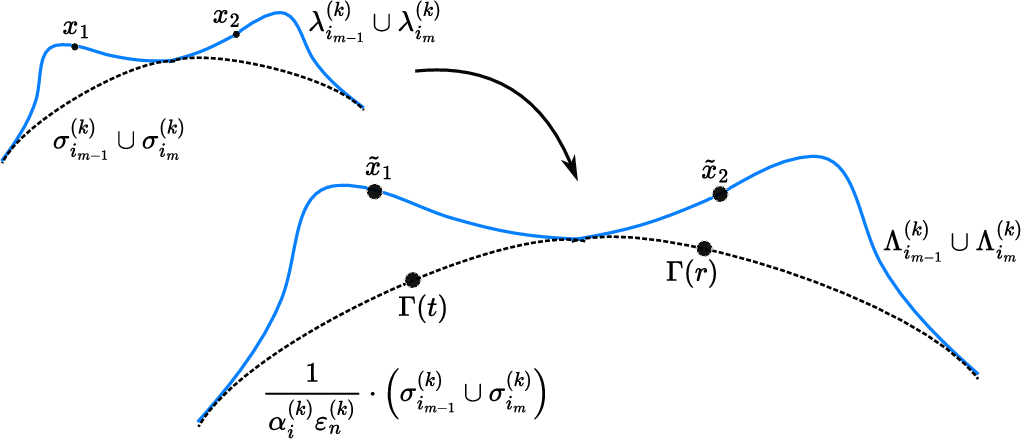}
%\captionsetup{justification=centering}
 \caption{Adjacent subarcs $\sigma_{i_{m-1}}^{(k)} \cup \sigma_{i_{m}}^{(k)}$ and $ \lambda_{i_{m-1}}^{(k)} \cup \lambda_{i_{m}}^{(k)}$}
 \label{l1l2}
\end{figure}

From the definition of $\varepsilon_{n}^{(k)}$, the curvature of the curve $\tfrac{1}{\alpha_{i}^{(k)}\varepsilon_{n}^{(k)}}\cdot\big(\sigma_{i_{m-1}}^{(k)}\cup\sigma_{i_{m}}^{(k)}\big)$ satisfies 
\[
    |\kappa(s)|\leq 2\sqrt{\beta_{n}^{(k)}}, \quad s\in[0,2],
\]
as shown in \eqref{curvature}.
By the Taylor expansion of the formula obtained as \eqref{tangent}, 
inequalities \eqref{eq: f'f_est}, and an estimate $\beta_n^{(k)}=\frac{k}{n^2} \leq \frac{1}{n}$,
we have
\begin{align}\label{eq: l(x_1x_2)}
    \ell\big(\wideparen{\tilde{x}_{1}\tilde{x}_{2}}\ \,\big) 
    &= \int_{t}^{r}\sqrt{1+\big(\dot{f}_{\sqrt{\beta_{n}^{(k)}}}(u)\big)^{2}-2\kappa(u)f_{\sqrt{\beta_{n}^{(k)}}}(u)+\kappa^{2}(u)f_{\sqrt{\beta_{n}^{(k)}}}^{2}(u)}\,du\notag\\
    &\leq\int_{t}^{r}\left(1+\tfrac{1}{2}\big(\dot{f}_{\sqrt{\beta_{n}^{(k)}}}(u)\big)^{2}-\kappa(u)f_{\sqrt{\beta_{n}^{(k)}}}(u)+O\bigl((\beta_{n}^{(k)})^2\bigr)\right)\,du\notag\\
    &\leq (r-t)+\tfrac{\pi^2}{2}\beta_{n}^{(k)}(r-t)+2\beta_{n}^{(k)}(r-t)+O\bigl((\beta_{n}^{(k)})^2\bigr)(r-t)\notag\\
    &\leq (r-t)\big(1+\pi^{2}\beta_{n}^{(k)}+2\beta_{n}^{(k)}\big)\leq (r-t)\left(1+\tfrac{12}{n}\right).
\end{align}

Furthermore, we obtain
\begin{align*}
    |\tilde{x}_{1}-\tilde{x}_{2}| =
    & | \Gamma(r)-\Gamma(t)+f_{\sqrt{\beta_{n}^{(k)}}}(r)N(r)-f_{\sqrt{\beta_{n}^{(k)}}}(t)N(t)| \\
    \geq& | \Gamma(r)-\Gamma(t)|  - \left(| f_{\sqrt{\beta_{n}^{(k)}}}(r)N(r)-f_{\sqrt{\beta_{n}^{(k)}}}(t)N(r)| \right.\\
    &\left.+| f_{\sqrt{\beta_{n}^{(k)}}}(t)N(r)-f_{\sqrt{\beta_{n}^{(k)}}}(t)N(t)| \right)\\
    =& | \Gamma(r)-\Gamma(t)|  - \left(| f_{\sqrt{\beta_{n}^{(k)}}}(r)-f_{\sqrt{\beta_{n}^{(k)}}}(t)| +f_{\sqrt{\beta_{n}^{(k)}}}(t)|N(r)-N(t)|\right).
\end{align*}
By the Mean Value Theorem applied at some $\xi \in (t,r)$, the Frenet--Serret formulas shows that
\begin{align}
    |N(r)-N(t)|=|\dot{N}(\xi)|(r-t)=|\kappa(\xi)|(r-t).
\end{align}
Together with the Lipschitz continuity of the cosine function and the estimate in \eqref{curvature},
it follows that
\begin{align}\label{eq: |x_1-x_2|}
    |\tilde{x}_{1}-\tilde{x}_{2}| \geq
    &\left(r-t\right)-\left(\tfrac{\sqrt{k}}{n}|\cos (2 \pi r)-\cos (2 \pi t)|+\tfrac{\sqrt{k}}{n}|\kappa(\xi)| (r-t)\right) \notag\\
    \geq&(r-t)-\left(\tfrac{2\pi\sqrt{k}}{n}(r-t)+\tfrac{\sqrt{k}}{n}\cdot\tfrac{2\sqrt{k}}{n}(r-t)\right)\notag \\
   \geq&(r-t)\big(1-\tfrac{2 \pi}{\sqrt{n}}-\tfrac{2}{n}\big) \geq(r-t)\big(1-\tfrac{7}{\sqrt{n}}\big).
\end{align}
From \eqref{eq: l(x_1x_2)} and \eqref{eq: |x_1-x_2|}, we have
\begin{equation}\label{firstconclusion}
    \frac{\ell\big(\wideparen{x_{1} x_{2}}\big)}{\lvert x_{1}-x_{2}\rvert }=\frac{\ell\big(\wideparen{\tilde{x}_{1} \tilde{x}_{2}}\ \,\big)}{\lvert \tilde{x}_{1}-\tilde{x}_{2}\rvert } \leq \frac{1+\frac{12}{n}}{1-\frac{7}{\sqrt{n}}} \leq 1+\frac{8}{\sqrt{n}}
\end{equation} 
for sufficiently large $n$.

By a similar argument, the result also holds for the case $x_{1},\,x_{2}\in \lambda_{(i-1)_{M}}^{(k)} \cup \lambda_{i_{1}}^{(k)}$. For the reader's convenience, we provide a proof for this case below.

It suffices to show that the result is valid for any $x_{1}\in\lambda_{(i-1)_M}^{(k)}, x_{2}\in\lambda_{i_{1}}^{(k)}$. Assume $\tilde{x}_{1},\tilde{x}_{2}$ are points on $\tfrac{1}{\alpha_{i-1}^{(k)}\varepsilon_{n}^{(k)}}\cdot\bigl (\lambda_{(i-1)_M}^{(k)}\cup\lambda_{i_{1}}^{(k)}\bigr)$ corresponding to $ x_{1},\,x_{2} $ (see Figure~\ref{l1l2_appendix}). Let $\Gamma(s)$ be the arc-length parametrization of the curve
\[
    \tfrac{1}{\alpha_{i-1}^{(k)}\varepsilon_{n}^{(k)}}\cdot\bigl(\sigma_{(i-1)_M}^{(k)}\cup\sigma_{i_{1}}^{(k)}\bigr),\quad s\in\Bigl[0, 1+\tfrac{\alpha_{i}^{(k)}}{\alpha_{i-1}^{(k)}}\Bigr].
\]
Let the projection of $\tilde{x}_{1},\tilde{x}_{2}$ onto $\tfrac{1}{\alpha_{i-1}^{(k)}\varepsilon_{n}^{(k)}}\cdot\bigl(\sigma_{(i-1)_M}^{(k)}\cup\sigma_{i_{1}}^{(k)}\bigr)$ be $\Gamma(t)$, $\Gamma(r)$, respectively. Then we have 
\[ 
    \begin{cases}
        \tilde{x}_{1}=\Gamma(t)+f_{\sqrt{\beta_{n}^{(k)}}}(t)N(t),\\
        \tilde{x}_{2}=\Gamma(r)-\tfrac{\alpha_{i}^{(k)}}{\alpha_{i-1}^{(k)}}f_{\sqrt{\beta_{n}^{(k)}}}\bigl(\tfrac{\alpha_{i-1}^{(k)}}{\alpha_{i}^{(k)}}(r-1)\bigr)N(r)\eqqcolon\Gamma(r)+ g_{\sqrt{\beta_{n}^{(k)}}}(r)N(r).
    \end{cases}
\]

\begin{figure}[htp]
 \centering
 \includegraphics[width=8cm]{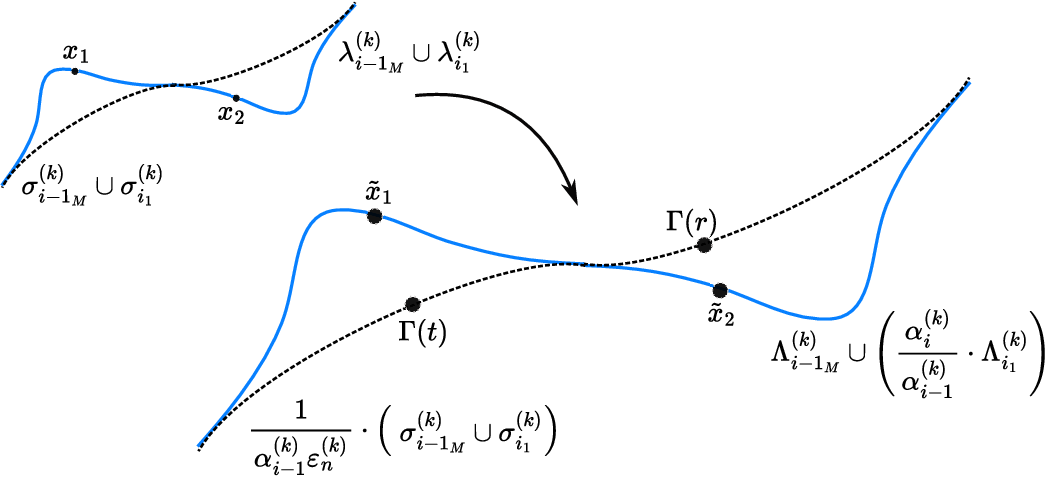}
%\captionsetup{justification=centering}
 \caption{Adjacent subarcs $\sigma_{(i-1)_M}^{(k)} \cup \sigma_{i_1}^{(k)}$ and $ \lambda_{(i-1)_{M}}^{(k)} \cup \lambda_{i_{1}}^{(k)}$}
 \label{l1l2_appendix}
\end{figure}
From \eqref{curvature}, the curvature of the curve $\tfrac{1}{\alpha_{i}^{(k)}\varepsilon_{n}^{(k)}}\cdot\bigl(\sigma_{i_{m-1}}^{(k)}\cup\sigma_{i_{m}}^{(k)}\bigr)$ satisfies 
\[
    |\kappa(s)|\leq 2\sqrt{\beta_{n}^{(k)}}, \quad s\in\Bigl[0, 1+\tfrac{\alpha_{i}^{(k)}}{\alpha_{i-1}^{(k)}}\Bigr].
\]
By the estimate under the Taylor expansion as before and inequalities \eqref{eq: f'f_est}, we have
\begin{align}\label{l(x_1x_2)_appendix}
    \ell\bigl(\wideparen{\tilde{x}_{1}\tilde{x}_{2}}\ \,\bigr) =& \int_{t}^{1}\sqrt{1+\big(\dot{f}_{\sqrt{\beta_{n}^{(k)}}}(u)\big)^{2}-2\kappa(u)f_{\sqrt{\beta_{n}^{(k)}}}(u)+\kappa^{2}(u)f_{\sqrt{\beta_{n}^{(k)}}}^{2}(u)}\,du\notag\\
    &+\int_{1}^{r}\sqrt{1+\bigl(\dot{g}_{\sqrt{\beta_{n}^{(k)}}}(u)\bigr)^{2}-2\kappa(u)g_{\sqrt{\beta_{n}^{(k)}}}(u)+\kappa^{2}(u)g_{\sqrt{\beta_{n}^{(k)}}}^{2}(u)}\,du \notag\\
    \leq&\int_{t}^{1}\Bigl(1+\tfrac{1}{2}\big(\dot{f}_{\sqrt{\beta_{n}^{(k)}}}(u)\big)^{2}-\kappa(u)f_{\sqrt{\beta_{n}^{(k)}}}(u)+O\bigl((\beta_{n}^{(k)})^2\bigr)\Bigr)\,du\notag\\
    &+\int_{1}^{r}\Bigl(1+\tfrac{1}{2}\bigl(\dot{g}_{\sqrt{\beta_{n}^{(k)}}}(u)\bigr)^{2}-\kappa(u)g_{\sqrt{\beta_{n}^{(k)}}}(u)+O\bigl((\beta_{n}^{(k)})^2\bigr)\Bigr)\,du\notag\\
    \leq &(r-t)+\tfrac{\pi^2}{2}\beta_{n}^{(k)}(r-t)+4\beta_{n}^{(k)}(r-t)+O\bigl((\beta_{n}^{(k)})^2\bigr)(r-t)\notag\\
    \leq &(r-t)\left(1+\pi^{2}\beta_{n}^{(k)}+4\beta_{n}^{(k)}\right)\leq (r-t)\left(1+\tfrac{14}{n}\right).
\end{align}
Furthermore, 
\begin{align}\label{eq: tildex1x2_appendix}
     |\tilde{x}_{1}-\tilde{x}_{2}|
    =& \bigl| \Gamma(r)-\Gamma(t)+ g_{\sqrt{\beta_{n}^{(k)}}}(r)N(r)-f_{\sqrt{\beta_{n}^{(k)}}}(t)N(t)\bigr |  \notag\\
\geq &\lvert \Gamma(r)-\Gamma(t)\rvert -\Bigl(\bigl|  g_{\sqrt{\beta_{n}^{(k)}}}(r)N(r)-f_{\sqrt{\beta_{n}^{(k)}}}(t)N(r)\bigr| \notag\\
     &+\big| f_{\sqrt{\beta_{n}^{(k)}}}(t)N(r)-f_{\sqrt{\beta_{n}^{(k)}}}(t)N(t)\bigr| \Bigr) \notag\\
\geq &(r-t)-\Bigl(\Bigl| -\tfrac{\alpha_{i}^{(k)}}{\alpha_{i-1}^{(k)}}f_{\sqrt{\beta_{n}^{(k)}}}\bigl(\tfrac{\alpha_{i-1}^{(k)}}{\alpha_{i}^{(k)}}(r-1)\bigr)-f_{\sqrt{\beta_{n}^{(k)}}}\bigl(\tfrac{\alpha_{i-1}^{(k)}}{\alpha_{i}^{(k)}}(r-1)\bigr)\Bigr| \notag\\
     & +\Bigl| f_{\sqrt{\beta_{n}^{(k)}}}\bigl(\tfrac{\alpha_{i-1}^{(k)}}{\alpha_{i}^{(k)}}(r-1)\bigr)-f_{\sqrt{\beta_{n}^{(k)}}}(t)\Bigr| +\Bigl| f_{\sqrt{\beta_{n}^{(k)}}}(t)N(r)-f_{\sqrt{\beta_{n}^{(k)}}}(t)N(t)\Bigr|  \Bigr)\notag\\
\eqqcolon& (r-t)-(I_{1}+I_{2}+I_{3}).
\end{align}

Applying Claim~\ref{slope}, and noting that $\frac{\alpha_{i-1}^{(k)}}{\alpha_{i}^{(k)}}\in [1,2]$ and $t\leq 1\leq r$, 
we bound the first term $I_1$ as
\begin{align}\label{I_1}
    I_{1} &=\Bigl | \tfrac{\alpha_{i}^{(k)}}{\alpha_{i-1}^{(k)}}+1\Bigr| \cdot f_{\sqrt{\beta_{n}^{(k)}}}\bigl(\tfrac{\alpha_{i-1}^{(k)}}{\alpha_{i}^{(k)}}(r-1)\bigr)\notag\\
    &\leq 8 \sqrt{\beta_{n}^{(k)}} \Bigl\lvert \tfrac{\alpha_{i}^{(k)}}{\alpha_{i-1}^{(k)}}+1\Bigr\rvert \Bigl\lvert \tfrac{\alpha_{i-1}^{(k)}}{\alpha_{i}^{(k)}}(r-1)\Bigr\rvert \notag\\
          &\leq \tfrac{8}{\sqrt{n}}(r-1)\Bigl\lvert  1+\tfrac{\alpha_{i-1}^{(k)}}{\alpha_{i}^{(k)}}\Bigr\rvert  \leq \tfrac{24}{\sqrt{n}}(r-t).
\end{align}
Moreover, for the second term $I_2$, we have
\begin{align}\label{I_2}
    I_{2} &= \tfrac{\sqrt{k}}{2n}\Bigl\lvert \cos\bigl(2\pi(r-1)\cdot\tfrac{\alpha_{i-1}^{(k)}}{\alpha_{i}^{(k)}}\bigr)-\cos\left(2\pi(t-1)\right)\Bigr\rvert  \notag\\
    &\leq \tfrac{\pi}{\sqrt{n}}\Bigl\lvert (r-1)\tfrac{\alpha_{i-1}^{(k)}}{\alpha_{i}^{(k)}}-(t-1)\Bigr\rvert  = \tfrac{\pi}{\sqrt{n}}\bigl((r-1)\tfrac{\alpha_{i-1}^{(k)}}{\alpha_{i}^{(k)}}+1-t\bigr) \notag\\
    &\leq \tfrac{\pi}{\sqrt{n}}(2(r-1)+1-t) 
    %= \tfrac{\pi}{\sqrt{n}}(2r-1-t) 
    \leq 
    %\tfrac{\pi}{\sqrt{n}}(2r-2t) =
    \tfrac{2\pi}{\sqrt{n}}(r-t).
\end{align}

Finally, by the Mean Value Theorem and the Frenet--Serret formulas, the third term $I_3$ is estimated as follows:
\begin{align}\label{I_3}
    I_{3} = f_{\sqrt{\beta_{n}^{(k)}}}(t)|\kappa(\xi)|(r-t) \leq \sqrt{\beta_{n}^{(k)}}\cdot 2\sqrt{\beta_{n}^{(k)}}(r-t) \leq \tfrac{2}{n}(r-t).
\end{align}

From inequalities \eqref{eq: tildex1x2_appendix}, \eqref{I_1}, \eqref{I_2} and \eqref{I_3}, we deduce that 
\begin{align}\label{tildex1x2}
    |\tilde{x}_{1}-\tilde{x}_{2}| \geq (r-s)\bigl(1-\tfrac{24}{\sqrt{n}}-\tfrac{2\pi}{\sqrt{n}}-\tfrac{2}{n}\bigr) \geq (r-s)\bigl(1-\tfrac{31}{\sqrt{n}}\bigr)
\end{align}
for sufficiently large $n$. Therefore, combining \eqref{l(x_1x_2)_appendix} and \eqref{tildex1x2}, we conclude
\begin{align*}
    \frac{\ell\big(\wideparen{x_{1}x_{2}}\big)}{|x_{1}-x_{2}|} = \frac{\ell\big(\wideparen{\tilde{x}_{1}\tilde{x}_{2}}\ \,\big)}{|\tilde{x}_{1}-\tilde{x}_{2}|} \leq \frac{1+\frac{12}{n}}{1-\frac{31}{\sqrt{n}}} \leq 1+\frac{32}{\sqrt{n}}. 
\end{align*}
Combined with \eqref{firstconclusion}, this completes the proof of Lemma~\ref{x1x2_in_small_bump}.
\end{proof}

\begin{lemma}\label{x1x2_in_big_bump}
    For any sufficiently large $n$, let $x_1, \, x_2$ lie on the curve $\gamma_{n}^{(k)}$ with $1\leq k\leq n$. Then
    \[
        \max_{w\in\wideparen{x_{1}x_{2}}}\frac{|x_{1}-w|+|w-x_{2}|}{|x_{1}-x_{2}|}\leq 1+\frac{45}{\sqrt{n}}.
    \]
\end{lemma}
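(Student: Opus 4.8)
The plan is to split into cases according to the position of $x_1,x_2$ in the hierarchy of bumps, reducing each case to a curve on which asymptotic conformality is already controlled. If $k=1$, the estimate is immediate from Claim~\ref{asymconf_gamma1} with $n_1=n_2=n$, which gives $1+\pi/n\le 1+45/\sqrt n$. If $k\ge 2$ and $x_1,x_2$ lie on the same bump or on adjacent bumps of $\gamma_n^{(k)}$, then Lemma~\ref{x1x2_in_small_bump} gives $\ell(\wideparen{x_1x_2})\le(1+32/\sqrt n)|x_1-x_2|$, and since $|x_1-w|+|w-x_2|\le\ell(\wideparen{x_1w})+\ell(\wideparen{wx_2})=\ell(\wideparen{x_1x_2})$ the claim follows with $32<45$ to spare.

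So assume $k\ge 2$ and that $x_1,x_2$ lie on non-adjacent bumps of $\gamma_n^{(k)}$. The key structural observation is that the relation ``lie on the same or adjacent bumps'' descends under projection: if it holds at level $j$, it holds for the projections at level $j-1$, since two adjacent base subarcs $\sigma_{i_{m-1}}^{(j-1)},\sigma_{i_m}^{(j-1)}$ sit inside one inflection subarc, hence inside a single bump of $\gamma_n^{(j-1)}$, and the boundary type of adjacency straddles at most one bump boundary. Hence there is a largest index $j^\ast\in\{2,\dots,k\}$ for which the projections $y_1,y_2$ of $x_1,x_2$ onto $\gamma_n^{(j^\ast)}$ lie on the same or adjacent bumps of $\gamma_n^{(j^\ast)}$; if no such index exists I set $j^\ast=1$ with $y_1,y_2$ the projections onto $\gamma_n^{(1)}$. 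Non-adjacency at level $k$ forces $j^\ast\le k-1$, and maximality forces the projections of $x_1,x_2$ onto $\gamma_n^{(j^\ast+1)}$ to be non-adjacent, so the arc $\wideparen{y_1y_2}\subset\gamma_n^{(j^\ast)}$ contains a complete base subarc and therefore $\ell(\wideparen{y_1y_2})\ge\varepsilon_n^{(j^\ast)}$.

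Next I would balance two deviations. Since $y_1,y_2$ lie on the same or adjacent bumps of $\gamma_n^{(j^\ast)}$ (or, when $j^\ast=1$, on the graph $\gamma_n^{(1)}$ of slope $\le\pi/n$, so that Claim~\ref{asymconf_gamma1} and the graph estimate apply), Lemma~\ref{x1x2_in_small_bump} gives $\ell(\wideparen{y_1y_2})\le(1+32/\sqrt n)|y_1-y_2|$; combined with $\ell(\wideparen{y_1y_2})\ge\varepsilon_n^{(j^\ast)}$ this yields $M\coloneqq|y_1-y_2|\ge\varepsilon_n^{(j^\ast)}/(1+32/\sqrt n)$, and for the projection $w'$ of any $w\in\wideparen{x_1x_2}$ it yields $|y_1-w'|+|w'-y_2|\le\ell(\wideparen{y_1y_2})\le(1+32/\sqrt n)M$. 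In the other direction, the displacement incurred when projecting from $\gamma_n^{(k)}$ down to $\gamma_n^{(j^\ast)}$ telescopes exactly as in the proof of Claim~\ref{D_gammak_to_gamman}:
\[
\Delta\coloneqq\max\bigl\{|x_1-y_1|,\,|x_2-y_2|,\,|w-w'|\bigr\}\ \le\ \sum_{j=j^\ast}^{k-1}2\,\varepsilon_n^{(j)}\sqrt{\beta_n^{(j)}}\ \le\ \frac{4\,\varepsilon_n^{(j^\ast)}}{\sqrt n},
\]
using $\varepsilon_n^{(j+1)}\le\tfrac12\varepsilon_n^{(j)}$ and $\beta_n^{(j)}\le 1/n$, so that $\Delta/M=O(1/\sqrt n)$. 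Then the triangle inequality gives $|x_1-w|+|w-x_2|\le(1+32/\sqrt n)M+4\Delta$ and $|x_1-x_2|\ge M-2\Delta$, hence
\[
\frac{|x_1-w|+|w-x_2|}{|x_1-x_2|}\ \le\ \frac{(1+32/\sqrt n)+4\Delta/M}{1-2\Delta/M}\ \le\ 1+\frac{45}{\sqrt n}
\]
for $n$ large, after inserting the bounds on $M$ and $\Delta$ and estimating the resulting explicit function of $1/\sqrt n$.

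The hard part is precisely this non-adjacent case: when $x_1$ and $x_2$ are many bumps apart there is no direct lower bound for $|x_1-x_2|$ in terms of a single bump size, so one is forced to identify the coarsest level $j^\ast$ at which the two points are still ``close'' and to compare $\gamma_n^{(k)}$ with $\gamma_n^{(j^\ast)}$ there. Making $j^\ast$ behave well depends on the inheritance of the ``same/adjacent bump'' relation under projection, and the error estimate depends on the telescoping bound $\Delta\le 4\varepsilon_n^{(j^\ast)}/\sqrt n$ being small relative to $M\asymp\varepsilon_n^{(j^\ast)}$; this is what keeps the final constant bounded (rather than accumulating over the $O(n)$ intermediate levels) and forces the exponent $1/\sqrt n$ rather than something smaller.
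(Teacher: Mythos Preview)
Your proof is correct and follows essentially the same approach as the paper: select the coarsest level $j$ at which the projections of $x_1,x_2$ are close enough for Lemma~\ref{x1x2_in_small_bump} (or Claim~\ref{asymconf_gamma1} when $j=1$) to apply, then transfer the estimate back to level $k$ via the telescoping deviation bound underlying Claim~\ref{D_gammak_to_gamman}. The only differences are cosmetic---the paper uses the chord-distance threshold $|x_1^{(j)}-x_2^{(j)}|\ge\varepsilon_n^{(j)}/2$ in place of your same/adjacent-bump criterion (so your ``inheritance'' observation is not actually needed), and your explicit constant, if tracked carefully, comes out somewhat larger than $45$, though still $O(1/\sqrt n)$, which is all that is used downstream.
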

\begin{proof}
Let $x_{i}=x_{i}^{(k)}\in\gamma_{n}^{(k)}$ for $i=1,2$,
and let $x_{i}^{(j)}$ be the projection of $x_{i}^{(k)}$ on $\gamma_{n}^{(j)}$ defined inductively for
$1\leq j\leq k$. 

If $| x_{1}^{(k-1)}-x_{2}^{(k-1)}| <\tfrac{\varepsilon_{n}^{(k-1)}}{2}$, then $x_1=x_{i}^{(k)}$ and $x_2=x_{2}^{(k)}$ lie on adjacent bumps of $\gamma_{n}^{(k)}$. By Lemma~\ref{x1x2_in_small_bump}, we have
\[
    \max_{w\in\wideparen{x_{1}x_{2}}}\frac{|x_{1}-w|+|w-x_{2}|}{|x_{1}-x_{2}|}\leq 1+\frac{32}{\sqrt{n}}.
\]
Otherwise, 
%by the choice of $\varepsilon_{n}^{(j)}$, 
there must exist $1\leq j\leq k-1$ such that
\[
    \begin{cases}
        | x_{1}^{(j)}-x_{2}^{(j)}| \geq\frac{\varepsilon_{n}^{(j)}}{2},\\
        | x_{1}^{(j-1)}-x_{2}^{(j-1)}| <\frac{\varepsilon_{n}^{(j-1)}}{2}.
    \end{cases}
\]
Note that if the first condition holds for $j=1$, we ignore the second condition in this case. Since $| x_{1}^{(j-1)}-x_{2}^{(j-1)}| <\frac{\varepsilon_{n}^{(j-1)}}{2}$, we deduce that $x_{1}^{(j)}$ and $x_{2}^{(j)}$ lie on adjacent bumps $\lambda_{i_{m-1}}^{(j-1)}\cup\lambda_{i_{m}}^{(j-1)}$ or $\lambda_{{(i-1)}_M}^{(j-1)}\cup\lambda_{i_{1}}^{(j-1)}$ of $\gamma_{n}^{(j)}$. By Lemma~\ref{x1x2_in_small_bump} again, we obtain
\begin{align}\label{j}
    \max_{w^{(j)}\in\wideparen{x_{1}^{(j)}x_{2}^{(j)}}}\frac{\lvert x_{1}^{(j)}-w^{(j)}\rvert +\lvert w^{(j)}-x_{2}^{(j)}\rvert }{\lvert x_{1}^{(j)}-x_{2}^{(j)}\rvert }\leq\frac{\ell\big(\wideparen{x_{1}^{(j)}x_{2}^{(j)}}\big)}{\lvert x_{1}^{(j)}-x_{2}^{(j)}\rvert }\leq 1+\frac{32}{\sqrt{n}}.
\end{align}

For any $w$ lying on $\wideparen{x_{1}x_{2}}$, the following estimate holds by Claim~\ref{D_gammak_to_gamman}:
\begin{align*}
    &\quad \frac{\lvert x_{1}-w\rvert + \lvert w-x_{2}\rvert}{\lvert x_{1}-x_{2}\rvert} \\
    &\leq \frac{\lvert x_{1}-x_{1}^{(j)}\rvert + \lvert x_{1}^{(j)}-w^{(j)}\rvert  + \lvert w^{(j)}-w\rvert + \lvert w-w^{(j)}\rvert + \lvert w^{(j)}-x_{2}^{(j)}\rvert + \lvert x_{2}^{(j)}-x_{2}\rvert}{\lvert x_{1}^{(j)}-x_{2}^{(j)}\rvert  - \big(\lvert x_{1}^{(j)}-x_{1}\rvert + \lvert x_{2}^{(j)}-x_{2}\rvert \big)} \\
    &\leq \frac{\tfrac{4\varepsilon_{n}^{(j)}}{\sqrt{n}}+ \lvert x_{1}^{(j)}-w^{(j)}\rvert  + \lvert w^{(j)}-x_{2}^{(j)}\rvert}{\lvert x_{1}^{(j)}-x_{2}^{(j)}\rvert - \tfrac{2\varepsilon_{n}^{(j)}}{\sqrt{n}}}.
\end{align*}
Together with the condition $\lvert x_{1}^{(j)}-x_{2}^{(j)}\rvert \geq\frac{\varepsilon_{n}^{(j)}}{2}
$, Lemma \ref{x1x2_in_small_bump}, and inequality~\eqref{j}, it follows that
\[
    \begin{aligned}
        &\quad\max_{w\in\wideparen{x_{1}x_{2}}}\frac{|x_{1}-w|+|w-x_{2}|}{|x_{1}-x_{2}|}\\
        &\leq \max_{w^{(j)}\in\wideparen{x_{1}^{(j)}x_{2}^{(j)}}}\frac{\frac{8}{\sqrt{n}}\lvert x_1^{(j)}-x_2^{(j)}\rvert +\big(1+\frac{32}{\sqrt{n}}\big)\lvert x_1^{(j)}-x_2^{(j)}\rvert }{\lvert x_1^{(j)}-x_2^{(j)}\rvert -\frac{4}{\sqrt{n}}\lvert x_1^{(j)}-x_2^{(j)}\rvert } = \frac{1+\frac{40}{\sqrt{n}}}{1-\frac{4}{\sqrt{n}}}\le 1+\frac{45}{\sqrt{n}}
    \end{aligned}
\]
for sufficiently large $n$.
\end{proof}

\begin{lemma}\label{x1x2_in_union_of_big_bumps}
For any fixed sufficiently large $N$, let $x_{1},\,x_{2}$ lie on the curve $\bigcup\limits_{n\geq N}\gamma_{n}^{(n)}$. Then
    \[
        \max\limits_{w\in\wideparen{x_{1}x_{2}}}\frac{|x_{1}-w|+|w-x_{2}|}{|x_{1}-x_{2}|}\leq 1+\frac{78}{\sqrt{N}}.
    \]
\end{lemma}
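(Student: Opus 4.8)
The plan is to reduce the general case to the two situations already treated: two points on a single $\gamma_n^{(n)}$ (Lemma~\ref{x1x2_in_big_bump}) and two points on the coarse curves $\gamma_n^{(1)}$ (Claim~\ref{asymconf_gamma1}). If $x_1,x_2$ lie on the same $\gamma_n^{(n)}$ with $n\ge N$, Lemma~\ref{x1x2_in_big_bump} gives at once the bound $1+45/\sqrt n\le 1+78/\sqrt N$. So assume $x_1\in\gamma_{n_1}^{(n_1)}$, $x_2\in\gamma_{n_2}^{(n_2)}$ with $n_1\ne n_2$; write $n:=\min\{n_1,n_2\}\ge N$ and say $n_1<n_2$. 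Since all the pieces with index $\ge N$ cluster near the origin while the rest of $\gamma$ has macroscopic diameter, the subarc $\wideparen{x_1x_2}$ stays inside $\bigcup_{m=n_1}^{n_2}\gamma_m^{(m)}$, so for $w\in\wideparen{x_1x_2}$ we have $w\in\gamma_m^{(m)}$ for some $n_1\le m\le n_2$.

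First I would pass to the coarse level. The iterated Frenet frames of the construction furnish a monotone projection $\gamma_m^{(m)}\to\gamma_m^{(1)}$; let $y_i$ be the projection of $x_i$ onto $\gamma_{n_i}^{(1)}$ and $w'$ the projection of $w$ onto $\gamma_m^{(1)}$. By Claim~\ref{D_gammak_to_gamman} with $k=1$, and using $\varepsilon_m^{(1)}\le\tfrac12\varepsilon_m^{(0)}=2^{-m-2}$, each of $|x_1-y_1|,|x_2-y_2|,|w-w'|$ is at most $4\varepsilon_m^{(1)}/\sqrt m\le 2^{-n}/\sqrt n=:\Delta$. Since the projection preserves the order along the curve, $w'$ lies on the subarc of $\bigcup_m\gamma_m^{(1)}$ from $y_1$ to $y_2$, so Claim~\ref{asymconf_gamma1} gives $|y_1-w'|+|w'-y_2|\le(1+\pi/n)|y_1-y_2|$. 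Two triangle inequalities then yield
\[
|x_1-w|+|w-x_2|\ \le\ \big(1+\tfrac{\pi}{n}\big)\,|y_1-y_2|+4\Delta,\qquad |x_1-x_2|\ \ge\ |y_1-y_2|-2\Delta .
\]
Whenever $|y_1-y_2|\ge 2^{-n-1}$ — which holds automatically if $n_2\ge n_1+2$, since then the abscissa ranges $[2^{-n_1},2^{-n_1+1}]$ and $[2^{-n_2},2^{-n_2+1}]$ of $\gamma_{n_1}^{(1)}$ and $\gamma_{n_2}^{(1)}$ are disjoint with gap $\ge 2^{-n_1-1}$ — we get $\Delta/|y_1-y_2|\le 2/\sqrt n$, and the two displayed bounds combine to a ratio $\le \big(1+\pi/n+8/\sqrt n\big)/\big(1-4/\sqrt n\big)\le 1+78/\sqrt N$ once $N$ is large.

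The remaining, delicate case is $n_2=n_1+1$ with $x_1,x_2$ both close to the common endpoint $p=(2^{-n_1},0)$ of $\gamma_{n_1}^{(n_1)}$ and $\gamma_{n_1+1}^{(n_1+1)}$ (equivalently $|y_1-y_2|<2^{-n-1}$); then $p\in\wideparen{x_1x_2}$. For this I would prove a \emph{local flatness estimate}: in coordinates in which the real axis is the common tangent of $\gamma_{n_1}^{(1)}$ and $\gamma_{n_1+1}^{(1)}$ at $p$, near $p$ the curve $\gamma_{n_1}^{(n_1)}$ is a graph over the $x$-axis, with $x$-coordinate increasing away from $p$, lying in a cone $\{|y|\le (c/\sqrt{n_1})(x-2^{-n_1})\}$, and $\gamma_{n_1+1}^{(n_1+1)}$ lies in the reflected cone. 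Indeed $\gamma_{n_1}^{(1)}=\{(x,f_{1/(n_1 2^{n_1})}(2^{n_1}(x-2^{-n_1})))\}$ already satisfies $0\le y\le \tfrac{3}{n_1}(x-2^{-n_1})$ because $f_h(t)=h\sin^2(\pi t)\le 3ht$ on $[0,1]$; adding the bumps moves the curve by at most $\Delta$ in the normal direction (Claim~\ref{D_gammak_to_gamman}), but — crucially — every bump returns to its base, so the \emph{net} vertical excursion at distance $d$ from $p$ stays $O(d/\sqrt{n_1})$ after summing over the scales $\varepsilon_{n_1}^{(1)}>\varepsilon_{n_1}^{(2)}>\cdots$; and each embedded bump is a graph over the $x$-axis with $\hat X'>3/4$ by Claim~\ref{slope}, so the abscissa never decreases below $2^{-n_1}$. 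Granting the cone estimate, put $a:=X(x_1)-2^{-n_1}\ge 0$, $b:=2^{-n_1}-X(x_2)\ge 0$, so that $|x_1-p|+|p-x_2|\le\sqrt{1+c^2/n_1}\,(a+b)\le\sqrt{1+c^2/n_1}\,|x_1-x_2|$. For $w\in\wideparen{x_1x_2}$, say $w\in\wideparen{x_1p}\subset\gamma_{n_1}^{(n_1)}$, Lemma~\ref{x1x2_in_big_bump} gives $|x_1-w|+|w-p|\le(1+45/\sqrt{n_1})|x_1-p|$, hence
\[
|x_1-w|+|w-x_2|\ \le\ \big(1+\tfrac{45}{\sqrt{n_1}}\big)\big(|x_1-p|+|p-x_2|\big)\ \le\ \big(1+\tfrac{45}{\sqrt{n_1}}\big)\sqrt{1+\tfrac{c^2}{n_1}}\,|x_1-x_2|\ \le\ \big(1+\tfrac{78}{\sqrt N}\big)|x_1-x_2|,
\]
the last step holding for $N$ large (the constant $78$ comfortably absorbs $45$ together with the lower-order cone/near-collinearity corrections, the latter also controllable via Lemma~\ref{x1x2_in_small_bump}).

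The main obstacle is exactly this local flatness (cone) estimate near a junction point $p$. The nested bumps can carry arbitrarily large local slope, so one cannot argue by bounding the slope directly; instead one must show that the cumulative vertical displacement at distance $d$ from $p$ — full bumps at scales below $d$, the parabolic "feet" of the coarser bumps at scales above $d$, and the contribution of $\gamma_{n_1}^{(1)}$ itself — telescopes geometrically to $O(d/\sqrt{n_1})$, and that the abscissa along $\gamma_{n_1}^{(n_1)}$ never backtracks below $2^{-n_1}$. Everything else is routine bookkeeping with the constants already produced by Claims~\ref{D_gammak_to_gamman} and \ref{asymconf_gamma1} and Lemmas~\ref{x1x2_in_small_bump} and \ref{x1x2_in_big_bump}.
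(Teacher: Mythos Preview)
Your three-case decomposition is exactly the paper's, and your treatment of the non-adjacent case (project everything to the level-$1$ curves, use Claim~\ref{asymconf_gamma1}, and absorb the $O(2^{-n}/\sqrt n)$ deviations from Claim~\ref{D_gammak_to_gamman} against the $\ge 2^{-n-1}$ chord) is correct and matches the paper's Case~3.

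The gap is precisely where you locate it: in the adjacent case $n_2=n_1+1$ you \emph{assume} the cone estimate $|\operatorname{Im} z|\le (c/\sqrt{n_1})\,|\operatorname{Re} z - 2^{-n_1}|$ for $z\in\gamma_{n_1}^{(n_1)}$ near $p$, and the monotonicity of the abscissa along $\gamma_{n_1}^{(n_1)}$, but you do not prove either. Your sketch (``full bumps return to base, so contributions telescope'') is not a proof, and the monotonicity claim does not follow from Claim~\ref{slope}: that claim gives $\hat X'>3/4$ for \emph{one} embedding step over a base curve with zero initial slope, not for the $n_1$-fold iterate. Without these two facts the inequality $|x_1-p|+|p-x_2|\le\sqrt{1+c^2/n_1}\,(a+b)\le\sqrt{1+c^2/n_1}\,|x_1-x_2|$ is unjustified (both the cone factor and the bound $a+b\le|x_1-x_2|$ rely on $X(x_1)\ge 2^{-n_1}\ge X(x_2)$).

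The paper does \emph{not} attempt a cone estimate on the full curve $\gamma_{n_1}^{(n_1)}$. Instead it reuses the level-finding device from Lemma~\ref{x1x2_in_big_bump}: choose $j$ with $|x_1^{(j)}-M_{n_2}|\ge \varepsilon_{n_1}^{(j)}/2$ but $|x_1^{(j-1)}-M_{n_2}|<\varepsilon_{n_1}^{(j-1)}/2$, and likewise $k$ for $x_2$. Then $x_1^{(j)}$ lies on the \emph{first} bump of $\gamma_{n_1}^{(j)}$ at $M_{n_2}$, so a \emph{single} application of Claim~\ref{slope} (the base curve $\sigma^{(j-1)}$ has zero slope at $M_{n_2}$ because every $f_h$ satisfies $f_h(0)=f_h'(0)=0$) yields $|x_1^{(j)}-M_{n_2}|\le(1+8/\sqrt N)\operatorname{Re}(x_1^{(j)}-M_{n_2})$. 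Combined with $|x_1-x_1^{(j)}|\le 4\varepsilon_{n_1}^{(j)}/\sqrt{n_1}\le (8/\sqrt{n_1})|x_1^{(j)}-M_{n_2}|$ from Claim~\ref{D_gammak_to_gamman}, this gives both the numerator bound $|x_1-M_{n_2}|+|M_{n_2}-x_2|\le(1+8/\sqrt N)^2\operatorname{Re}(x_2^{(k)}-x_1^{(j)})$ and the denominator bound $|x_1-x_2|\ge(1-16/\sqrt N)\operatorname{Re}(x_2^{(k)}-x_1^{(j)})$, which is exactly the near-collinearity you need. In short: rather than iterating the cone bound down to level $n_1$, drop to the level where one step of Claim~\ref{slope} suffices and let Claim~\ref{D_gammak_to_gamman} eat the remaining layers.
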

\begin{proof}
Recall that we denote $\big(\tfrac{1}{2^{n}},0\big)$ by $M_{n}$. 
Assume that $x_{1}\in\gamma_{n_{1}}^{(n_{1})}, x_{2}\in\gamma_{n_{2}}^{(n_{2})}$ with $n_1\geq n_2\geq N$.
     
\textbf{Case 1.} $n_{1}=n_{2}$: In this case, the claim is an immediate consequence of Lemma~\ref{x1x2_in_big_bump}.
     
\textbf{Case 2.} $n_{1}=n_{2}+1$: We distinguish two possibilities according to the position of $w$.
     
If $w\in\wideparen{x_{1}M_{n_{2}}}$, then
\begin{align}\label{case2_1}
    |x_{1}-w|+|w-x_{2}| 
    &\leq \lvert x_{1}-w\rvert +\lvert w-M_{n_{2}}\rvert +\lvert M_{n_{2}}-x_{2}\rvert  \notag\\
    &\leq \big(1+\tfrac{45}{\sqrt{n_1}}\big)\lvert x_{1}-M_{n_{2}}\rvert +\lvert M_{n_{2}}-x_{2}\rvert \notag\\
    &\leq \big(1+\tfrac{45}{\sqrt{n_1}}\big)\left(\lvert x_{1}-M_{n_{2}}\rvert +\lvert M_{n_{2}}-x_{2}\rvert \right),
\end{align}
where the second inequality follows from Lemma~\ref{x1x2_in_big_bump}.

If $w\in\wideparen{M_{n_{2}}x_{2}}$, then
\begin{align}\label{case2_2}
    |x_{1}-w|+|w-x_{2}| 
    &\leq \lvert x_{1}-M_{n_2}\rvert +\lvert M_{n_{2}}-w\rvert +\lvert w-x_{2}\rvert  \notag\\
    &\leq \lvert x_{1}-M_{n_{2}}\rvert +\big(1+\tfrac{45}{\sqrt{n_2}}\big)\lvert M_{n_{2}}-x_{2}\rvert \notag\\
    &\leq \big(1+\tfrac{45}{\sqrt{n_2}}\big)\left(\lvert x_{1}-M_{n_{2}}\rvert +\lvert M_{n_{2}}-x_{2}\rvert \right),
\end{align}
where, once again, the second inequality follows from Lemma~\ref{x1x2_in_big_bump}.

As in the proof of Lemma~\ref{x1x2_in_big_bump}, there exist indices $2\leq j,\,k\leq n-1$ such that
\[
    \begin{cases}
        |x_{1}^{(j)}-M_{n_{2}}|\geq\frac{\varepsilon_{n_{1}}^{(j)}}{2},\\
        |x_{1}^{(j-1)}-M_{n_{2}}|<\frac{\varepsilon_{n_{1}}^{(j-1)}}{2},
    \end{cases}
\]
and
\[
    \begin{cases}
        |x_{2}^{(k)}-M_{n_{2}}|\geq\frac{\varepsilon_{n_{2}}^{(k)}}{2},\\
        |x_{2}^{(k-1)}-M_{n_{2}}|<\frac{\varepsilon_{n_{2}}^{(k-1)}}{2}.
    \end{cases}
\]
Then, by Claim~\ref{D_gammak_to_gamman}, 
\begin{align}\label{eq: x1-Mn2+Mn2-x2}
        & \lvert x_{1}-M_{n_{2}}\rvert +\lvert M_{n_{2}}-x_{2}\rvert \notag\\
   \leq & \lvert x_{1}-x_{1}^{(j)}\rvert +\lvert x_{1}^{(j)}-M_{n_{2}}\rvert +\lvert M_{n_{2}}-x_{2}^{(k)}\rvert +\lvert x_{2}^{(k)}-x_{2}\rvert  \notag\\
   \leq & \tfrac{4\varepsilon_{n_{1}}^{(j)}}{\sqrt{n_1}}+\lvert x_{1}^{(j)}-M_{n_{2}}\rvert +\lvert M_{n_{2}}-x_{2}^{(k)}\rvert +\tfrac{4\varepsilon_{n_{2}}^{(k)}}{\sqrt{n_2}}.
\end{align}
By the choice of $j$ and $k$, together with inequality~\eqref{eq: x1-Mn2+Mn2-x2}, we obtain
\begin{align}\label{eq: x1-Mn2+Mn2-x2_est}
        & \lvert x_{1}-M_{n_{2}}\rvert +\lvert M_{n_{2}}-x_{2}\rvert \notag\\
   \leq &\big(1+\tfrac{8}{\sqrt{n_{1}}}\big)\lvert x_{1}^{(j)}-M_{n_{2}}\rvert  +\big(1+ \tfrac{8}{\sqrt{n_{2}}}\big)\lvert x_{2}^{(k)}-M_{n_{2}}\rvert \notag\\
   \leq & \big(1+\tfrac{8}{\sqrt{N}}\big)\big(\lvert x_{1}^{(j)}-M_{n_{2}}\rvert +\lvert M_{n_{2}}-x_{2}^{(k)}\rvert \big)\notag\\
   \leq & \big(1+\tfrac{8}{\sqrt{N}}\big)\big(1+\tfrac{8}{\sqrt{N}}\big)\big(\operatorname{Re}\lvert x_{1}^{(j)}-M_{n_{2}}\rvert +\operatorname{Re}\lvert M_{n_{2}}-x_{2}^{(k)}\rvert \big)\notag\\
   =    &\big(1+\tfrac{8}{\sqrt{N}}\big)^2\operatorname{Re}\big(x_{2}^{(k)}-x_{1}^{(j)}\big).
\end{align}
Here we apply the estimates
\[
    \lvert x_{1}^{(j)}-M_{n_{2}}\rvert \leq\big(1+\tfrac{8}{\sqrt{N}}\big)\operatorname{Re}\lvert x_{1}^{(j)}-M_{n_{2}}\rvert ,
\]
and
\[
    \lvert M_{n_{2}}-x_{2}^{(k)}\rvert \leq\big(1+\tfrac{8}{\sqrt{N}}\big)\operatorname{Re}\lvert M_{n_{2}}-x_{2}^{(k)}\rvert ,
\]
which follow from Claim~\ref{slope}.

Together with the choice of $j$ and $k$, and Claim~\ref{D_gammak_to_gamman}, it follows that
\begin{align}\label{case2_denom}
    \lvert x_{1}-x_{2}\rvert  &\geq \lvert x_{1}^{(j)}-x_{2}^{(k)}\rvert -\big(\lvert x_{2}^{(k)}-x_{2}\rvert +\lvert x_{1}^{(j)}-x_{1}\rvert \big)\notag\\
    &\geq  \lvert x_{1}^{(i)}-x_{2}^{(k)}\rvert -\big(\tfrac{4\varepsilon_{n_{2}}^{(k)}}{\sqrt{n_{2}}}+\tfrac{4\varepsilon_{n_{1}}^{(j)}}{\sqrt{n_{1}}}\big)\notag \\
    &\geq \lvert x_{1}^{(j)}-x_{2}^{(k)}\rvert -\tfrac{8}{\sqrt{N}}\big(\lvert x_{2}^{(k)}-M_{n_{2}}\rvert +\lvert x_{1}^{(j)}-M_{n_{2}}\rvert \big) \notag\\
    &\geq \lvert x_{1}^{(j)}-x_{2}^{(k)}\rvert -\tfrac{8}{\sqrt{N}}\big(1+\tfrac{8}{\sqrt{N}}\big)\big(\operatorname{Re}\lvert x_{2}^{(k)}-M_{n_{2}}\rvert +\operatorname{Re}\lvert x_{1}^{(j)}-M_{n_{2}}\rvert \big) \notag\\
    &\geq \lvert x_{1}^{(j)}-x_{2}^{(k)}\rvert -\tfrac{16}{\sqrt{N}}\lvert x_{1}^{(j)}-x_{2}^{(k)}\rvert \notag \\
    &\geq \big(1-\tfrac{16}{\sqrt{N}}\big)\operatorname{Re}\big(x_{2}^{(k)}-x_{1}^{(j)}\big).
\end{align}
Combining \eqref{case2_1}, \eqref{case2_2}, \eqref{eq: x1-Mn2+Mn2-x2_est} and \eqref{case2_denom}, we conclude
\begin{align*}
    \max_{x\in\wideparen{x_1x_2}}\frac{\lvert x_{1}-w\rvert +\lvert w-x_{2}\rvert }{\lvert x_{1}-x_{2}\rvert } \leq \frac{\big(1+\frac{45}{\sqrt{N}}\big)\big(1+\frac{8}{\sqrt{N}}\big)^2}{1-\frac{16}{\sqrt{N}}} \leq 1+\frac{78}{\sqrt{N}}
\end{align*}
for sufficiently large $N$.

\textbf{Case 3.} $n_{1}> n_{2}+1$: Assume $w\in\gamma_{n_{3}}^{(n_{3})},\,n_{1}\geq n_{3}\geq n_{2}$. 

The definition of $\varepsilon_{n}^{(1)}$ implies that
\[\varepsilon_n^{(1)}\leq \tfrac{1}{2^n\cdot 2^2}.\]
Thus by Claim~\ref{D_gammak_to_gamman}, it follows that 
\begin{align*}
     &\frac{\lvert x_{1}-w\rvert +\lvert w-x_{2}\rvert }{\lvert x_{1}-x_{2}\rvert } \\
\leq &\frac{\lvert x_{1}-x_{1}^{(1)}\rvert +\lvert x_{2}-x_{2}^{(1)}\rvert +2\lvert w-w^{(1)}\rvert+\lvert x_{1}^{(1)}-w^{(1)}\rvert +\lvert w^{(1)}-x_{2}^{(1)}\rvert}{\lvert x_{1}^{(1)}-x_{2}^{(1)}\rvert-\lvert x_{1}-x_{1}^{(1)}\rvert-\lvert x_{2}-x_{2}^{(1)}\rvert } \\     
\leq &\frac{\frac{1}{2^{n_{1}}\sqrt{n_{1}}}+\frac{1}{2^{n_{2}}\sqrt{n_{2}}}+\frac{2}{2^{n_{3}}\sqrt{n_{3}}}+\lvert x_{1}^{(1)}-w^{(1)}\rvert +\lvert w^{(1)}-x_{2}^{(1)}\rvert }{\lvert x_{1}^{(1)}-x_{2}^{(1)}\rvert -\big(\frac{1}{2^{n_{1}}\sqrt{n_{1}}}+\frac{1}{2^{n_{2}}\sqrt{n_{2}}}\big)}.
\end{align*}
Applying Claim~\ref{asymconf_gamma1} and noting that $\lvert x_{1}^{(1)}-x_{2}^{(1)}\rvert \geq \frac{1}{2^{n_2+1}}$, we obtain
\begin{align*}
     \frac{\lvert x_{1}-w\rvert +\lvert w-x_{2}\rvert }{\lvert x_{1}-x_{2}\rvert }
     &\leq \frac{\frac{4}{2^{n_{2}}\sqrt{n_{2}}}+\big(1+\frac{\pi}{n_{2}}\big)\lvert x_{1}^{(1)}-x_{2}^{(1)}\rvert }{\lvert x_{1}^{(1)}-x_{2}^{(1)}\rvert -\frac{2}{2^{n_{2}}\sqrt{n_{2}}}} \\
     &\leq \frac{\frac{8}{\sqrt{n_2}}\lvert x_{1}^{(1)}-x_{2}^{(1)}\rvert +\big(1+\frac{\pi}{n_{2}}\big)\lvert x_{1}^{(1)}-x_{2}^{(1)}\rvert }{\lvert x_{1}^{(1)}-x_{2}^{(1)}\rvert -\frac{4}{\sqrt{n_2}}\lvert x_{1}^{(1)}-x_{2}^{(1)}\rvert }\\
     &=\frac{1 + \frac{\pi}{n_{2}} + \frac{8}{\sqrt{n_{2}}}}{1 - \frac{4}{\sqrt{n_{2}}}} \leq 1 + \frac{13}{\sqrt{n_{2}}} \leq 1 + \frac{13}{\sqrt{N}}.
\end{align*}
Combining Cases 1, 2, and 3, we complete the proof of the lemma.
\end{proof}

\subsection{Verification}
Theorem~\ref{main1} stated in the introduction follows from Theorems~\ref{not_asymp_smooth},
\ref{asymp_conf}, and \ref{chord-arc}, established in this subsection.

\begin{theorem}\label{not_asymp_smooth}
    The curve $\gamma$ constructed in Section~\ref{counterexample} is not asymptotically smooth.
\end{theorem}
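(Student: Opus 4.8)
The plan is to defeat asymptotic smoothness by exhibiting, for each large $n$, two points of $\gamma$ at distance $2^{-n}$ whose connecting subarc is at least a fixed constant $c_0>1$ times $2^{-n}$ long; since $2^{-n}\to0$ this forces $\ell(\wideparen{ab})/|a-b|$ to stay $\ge c_0$ along a sequence with $|a-b|\to0$. The natural pair is the set of endpoints $M_n=(2^{-n},0)$ and $M_{n-1}=(2^{-(n-1)},0)$ of the building block $\gamma_n^{(n)}$, so that $|M_n-M_{n-1}|=2^{-n}$. Because $f_h$ vanishes at both ends of its interval of definition, every bump step leaves fixed the endpoints of each subarc it replaces; hence $\gamma_n^{(n)}$ still joins $M_n$ to $M_{n-1}$, and for large $n$ the subarc $\wideparen{M_nM_{n-1}}$ of smaller diameter is $\gamma_n^{(n)}$ itself, since $\gamma_n^{(n)}$ lies within distance $O(2^{-n})$ of the segment $[M_n,M_{n-1}]$ (Claim~\ref{D_gammak_to_gamman}) whereas the complementary subarc contains $C_1$ and $C_3$ and so has diameter bounded away from $0$ independently of $n$. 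Everything thus reduces to bounding $\ell(\gamma_n^{(n)})$ from below.

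First I would show that each bump step multiplies the total length by at least $1+\beta_n^{(k-1)}$. Writing $\gamma_n^{(k)}=\bigcup_{i,m}\lambda_{i_m}^{(k-1)}$ with $\lambda_{i_m}^{(k-1)}=\ell\big(\sigma_{i_m}^{(k-1)}\big)\cdot\Lambda_{i_m}^{(k-1)}$, where $\Lambda_{i_m}^{(k-1)}$ embeds $f_{\sqrt{\beta_n^{(k-1)}}}$ into the arc-length coordinate of the normalized arc $\ell\big(\sigma_{i_m}^{(k-1)}\big)^{-1}\cdot\sigma_{i_m}^{(k-1)}$, I note that this normalized arc is $C^2$, has length $1$, and by \eqref{curvature} has curvature of absolute value at most $2\sqrt{\beta_n^{(k-1)}}<1$ with $\kappa\le0$ (each $\sigma_i^{(k-1)}$ having been oriented convex upward, which is all the lower bound of the lemma uses). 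Lemma~\ref{lengthest_of_embeded_curve} then applies with $h=\sqrt{\beta_n^{(k-1)}}$ and gives $\ell\big(\Lambda_{i_m}^{(k-1)}\big)\ge 1+h^2=1+\beta_n^{(k-1)}$, hence $\ell\big(\lambda_{i_m}^{(k-1)}\big)\ge\big(1+\beta_n^{(k-1)}\big)\,\ell\big(\sigma_{i_m}^{(k-1)}\big)$. Summing over $i$ and $m$ yields $\ell\big(\gamma_n^{(k)}\big)\ge\big(1+\beta_n^{(k-1)}\big)\,\ell\big(\gamma_n^{(k-1)}\big)$.

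Iterating this from $k=2$ up to $k=n$, and using $\ell\big(\gamma_n^{(1)}\big)\ge|M_n-M_{n-1}|=2^{-n}$ (arc length dominates chord length), I would conclude
\[
\frac{\ell\big(\wideparen{M_nM_{n-1}}\big)}{|M_n-M_{n-1}|}=\frac{\ell\big(\gamma_n^{(n)}\big)}{2^{-n}}\ \ge\ \prod_{j=1}^{n-1}\Big(1+\frac{j}{n^2}\Big).
\]
A one-line estimate with $\log(1+x)\ge x-x^2/2$ gives $\sum_{j=1}^{n-1}\log\big(1+j/n^2\big)\ge\tfrac{n-1}{2n}-\tfrac{1}{6n}$, so the product is at least $e^{1/2-2/(3n)}$, which exceeds a fixed $c_0>1$ for all $n$ large. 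Since $|M_n-M_{n-1}|=2^{-n}\to0$ while the quotient stays $\ge c_0$, the limit defining asymptotic smoothness cannot equal $1$, so $\gamma$ is not asymptotically smooth.

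The one point requiring care — the main obstacle — is the uniformity in the application of Lemma~\ref{lengthest_of_embeded_curve}: its conclusion $\ell(\tilde f_h)\ge1+h^2$ holds only for ``$h$ sufficiently small'', and that threshold must not depend on which normalized subarc is being bumped at a given stage. Inspecting the proof of that lemma, the $O(h^3)$ remainder in the Taylor expansion is governed solely by a uniform curvature bound (here $\le1$) and by the absolute size of $f_h$ and $\dot f_h$ (both $O(h)$ with universal constants), so a single threshold works for all stages at once; since every $h=\sqrt{\beta_n^{(k-1)}}\le1/\sqrt n$, it is met once $n$ is large. Beyond this, the argument is the elementary telescoping product above together with the geometric observation identifying $\wideparen{M_nM_{n-1}}$ with $\gamma_n^{(n)}$.
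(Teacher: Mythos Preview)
Your proposal is correct and follows essentially the same route as the paper: choose the endpoints $M_n=(2^{-n},0)$ and $M_{n-1}=(2^{-(n-1)},0)$, apply Lemma~\ref{lengthest_of_embeded_curve} with $h=\sqrt{\beta_n^{(k-1)}}$ to obtain the per-step growth factor $1+\beta_n^{(k-1)}$, telescope the product, and bound $\sum_{j}\log(1+j/n^2)$ from below. Your write-up is in fact slightly more careful than the paper's in two respects (you justify why the smaller-diameter subarc is $\gamma_n^{(n)}$, and you explicitly address the uniformity of the ``$h$ sufficiently small'' threshold in Lemma~\ref{lengthest_of_embeded_curve}), and your logarithm estimate yields a sharper constant ($e^{1/2-2/(3n)}$ versus the paper's $e^{1/5}$), but these are cosmetic differences.
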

\begin{proof}
We begin by recalling that $\gamma_{n}^{(k-1)}=\bigcup_{i,\, m}\sigma_{i_m}^{(k-1)}$, where $\sigma_{i_m}^{(k-1)}$ forms a partition of $\gamma_{n}^{(k-1)}$. Consequently, we obtain
\begin{align}\label{eq: ratioinf}
    \frac{\ell\big(\gamma_{n}^{(k)}\big)}{\ell\big(\gamma_{n}^{(k-1)}\big)}
    &=\frac{\ell\big(\bigcup_{i,\, m}\lambda_{i_m}^{(k-1)}\big)}{\ell\big(\bigcup_{i,\, m}\sigma_{i_m}^{(k-1)}\big)}
     =\frac{\sum_{i,\, m} \ell\big(\lambda_{i_m}^{(k-1)}\big)}{\sum_{i,\, m} \ell\big(\sigma_{i_m}^{(k-1)}\big)} \geq \inf_{i,\, m} \frac{\ell\big(\lambda_{i_{m}}^{(k-1)}\big)}{\ell\big(\sigma_{i_{m}}^{(k-1)}\big)}.
\end{align}
Furthermore, by the construction process of $\gamma$ together with Lemma~\ref{lengthest_of_embeded_curve}, we have
\begin{align}\label{eq: lratio}
    \frac{\ell\big(\lambda_{i_{m}}^{(k-1)}\big)}{\ell\big(\sigma_{i_{m}}^{(k-1)}\big)}
    =\ell\big(\Lambda_{i_{m}}^{(k-1)}\big)\geq 1+\beta_{n}^{(k-1)}.
\end{align}
In addition, a direct computation yields
\begin{align}\label{eq: length_est_gamma_n^1}
    \frac{1}{2^n}\left(1+\frac{1}{n^2}\right)\leq \ell\big(\gamma_n^{(1)}\big)\leq\frac{1}{2^n}\left(1+\frac{4}{n^2}\right).
\end{align}

Therefore, combining \eqref{eq: ratioinf}, \eqref{eq: lratio}, and \eqref{eq: length_est_gamma_n^1}, we deduce that for sufficiently large $n$
\begin{align*}
    \ell\big(\gamma_n^{(n)}\big)
    &=\prod_{k=2}^n\tfrac{\ell\big(\gamma_n^{(k)}\big)}{\ell\big(\gamma_n^{(k-1)}\big)}\cdot \ell\big(\gamma_n^{(1)}\big)\\
    %\geq\prod_{k=2}^n\big(1+\beta_n^{(k-1)}\big)\cdot\tfrac{1}{2^n}\big(1+\tfrac{1}{n^2}\big)\\
    &\geq\prod_{k=2}^n\big(1+\beta_n^{(k-1)}\big)\cdot\tfrac{1}{2^n}\big(1+\tfrac{1}{n^2}\big)\geq\tfrac{1}{2^n}\exp\left(\sum_{k=2}^n\log\big(1+\beta_n^{(k-1)}\big)\right)\\
    &\geq\tfrac{1}{2^n}\exp\left(\tfrac{1}{2}\sum_{k=2}^n\tfrac{k-1}{n^2}\right)=\tfrac{1}{2^n}\exp\left(\tfrac{1}{2}\tfrac{n(n-1)}{2n^2}\right)\geq\tfrac{1}{2^n}\cdot e^{\tfrac{1}{5}},
\end{align*}
where in the last line we use the estimate $\log(1+x)\geq \tfrac{x}{2}$ for sufficiently small $x$.

Now, consider the points $x_1=\big(\tfrac{1}{2^n},0\big)\in\gamma$ and $x_2=\left(\tfrac{1}{2^{n-1}},0\right)\in\gamma$. Then
\[
    \frac{\ell\big(\wideparen{x_1x_2}\big)}{\lvert x_1-x_2\rvert }
    =\frac{\ell\big(\gamma_n^{(n)}\big)}{\lvert x_1-x_2\rvert }
    \geq e^{\frac{1}{5}},
\]
which contradicts the asymptotically smooth condition. Hence, the curve $\gamma$ cannot be asymptotically smooth.
\end{proof}

\begin{theorem}\label{asymp_conf}
    The curve $\gamma$ constructed in Section~\ref{counterexample} is asymptotically conformal.
\end{theorem}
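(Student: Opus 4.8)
The plan is to reduce to short subarcs and then to case on where the short arc $\wideparen{ab}$ sits. For any Jordan curve, $\operatorname{diam}(\wideparen{ab})\to 0$ as $|a-b|\to 0$ — this follows from the uniform continuity of a homeomorphic parametrization of $\gamma$ and of its inverse, together with the fact that for $|a-b|$ small the smaller-diameter subarc is the one traced over a short parameter interval. Hence, fixing $\varepsilon>0$, it suffices to bound $(|a-w|+|w-b|)/|a-b|$ by $1+\varepsilon$ for all $a,b\in\gamma$ with $|a-b|$ small and all $w\in\wideparen{ab}$.

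First I fix $N$ with $78/\sqrt N<\varepsilon/2$ and write $\gamma=S\cup\mathcal R$, where $S:=\bigcup_{n\ge N}\gamma_n^{(n)}$ is the subarc from $0$ to $M_{N-1}$ and $\mathcal R:=C_1\cup C_2\cup C_3\cup\bigcup_{1\le n\le N}\gamma_n^{(n)}$ is the complementary subarc from $0$ to $M_N$; note these overlap on $\gamma_N^{(N)}$. I record two preliminary facts. \emph{(1)} $\mathcal R$ is a compact $C^1$ arc: each $\gamma_n^{(n)}$ is $C^1$ because $f_h(0)=f_h(1)=\dot f_h(0)=\dot f_h(1)=0$ forces every bump to meet the underlying arc tangentially, and the finitely many junctions $M_0=1,M_1,\dots,M_{N-1}$ together with the two corners of the segment $C_2$ all carry a common horizontal unit tangent. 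Consequently $\mathcal R$ satisfies asymptotic conformality in the pairwise sense: there is $\delta_1>0$ with $(|a-w|+|w-b|)/|a-b|<1+\varepsilon$ whenever $a,b$ lie on a subarc of $\mathcal R$ of diameter $<\delta_1$ and $w$ lies between them (the only input is continuity of the unit tangent $T$: $|a-b|\ge(1-\eta)\ell(\wideparen{ab})\ge(1-\eta)(|a-w|+|w-b|)$ once $T$ varies by at most $\eta$ on $\wideparen{ab}$). \emph{(2)} By Claim~\ref{D_gammak_to_gamman} and the height bound $0\le f_{1/(n2^n)}\le 1/(n2^n)$ on $\gamma_n^{(1)}$, every $x\in\gamma_n^{(n)}$ satisfies $|\operatorname{Im}x|\le(4/\sqrt n)\operatorname{Re}x$, so $\operatorname{Re}x\ge(1-8/n)|x|$; in particular $S$ lies in a thin cone about the positive real axis near $0$, with $\operatorname{Re}x\ge(1-8/N)|x|$ for $x\in S$.

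Now take $a,b\in\gamma$ with $|a-b|$ small enough that $\operatorname{diam}(\wideparen{ab})<\min\{\delta_1,2^{-N-2}\}$. If $\wideparen{ab}$ is disjoint from $\bigcup_{n>N}\gamma_n^{(n)}$, then $\wideparen{ab}\subset\mathcal R$ and fact \emph{(1)} finishes it. Otherwise $\wideparen{ab}$ meets $\bigcup_{n>N}\gamma_n^{(n)}$, hence lies in a small ball about $0$, so $\wideparen{ab}\subset S\cup(C_1\cap\{\text{near }0\})\cup\{0\}$, in which $0$ is the only point of contact between the two pieces. If $0\notin\wideparen{ab}$, then by connectedness $\wideparen{ab}\subset S$ — and Lemma~\ref{x1x2_in_union_of_big_bumps} gives ratio $\le 1+78/\sqrt N<1+\varepsilon$ — or $\wideparen{ab}$ is a short subarc of the circular arc $C_1$, where the estimate is immediate. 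If $0\in\wideparen{ab}$, write $\wideparen{ab}=\wideparen{a0}\cup\wideparen{0b}$ with $a\in S$, $b\in C_1$ (the two sides of $\gamma$ at $0$). For $w\in\wideparen{a0}\subset S$, Lemma~\ref{x1x2_in_union_of_big_bumps} with its second endpoint tending to $0$ gives $|a-w|+|w|\le(1+78/\sqrt N)|a|$; for $w\in\wideparen{0b}\subset C_1$, smoothness of the circular arc $C_1$ at $0$ gives $|w|+|w-b|\le(1+o(1))|b|$. Together with the triangle inequality this yields $|a-w|+|w-b|\le(1+78/\sqrt N+o(1))(|a|+|b|)$, while fact \emph{(2)} and $|b|\le(1+o(1))|\operatorname{Re}b|$ (as $C_1$ is tangent to $\mathbb R$ at $0$) give $|a-b|\ge\operatorname{Re}a+|\operatorname{Re}b|\ge(1-o(1))(|a|+|b|)$; hence the ratio is $<1+\varepsilon$ for $N$ large and $|a-b|$ small. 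There is no separate case for $\wideparen{ab}$ straddling the junction $M_N$: there both endpoints lie on $\gamma_N^{(N)}\subset S$, so the previous case applies — this is exactly why $S$ and $\mathcal R$ are taken to overlap on $\gamma_N^{(N)}$.

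The routine parts — the $C^1$ estimate on $\mathcal R$, the elementary facts about the circular arc $C_1$, and the cone computation — I would dispatch quickly. The one genuinely delicate point is the transition across $0$: making precise that a short $\wideparen{ab}$ through $0$ splits into an $S$-piece (controlled by Lemma~\ref{x1x2_in_union_of_big_bumps}) and a $C_1$-piece, and that the denominator $|a-b|$ is bounded below by $(1-o(1))(|a|+|b|)$ because both pieces leave $0$ tangentially to $\mathbb R$ — from the $S$-side via the cone estimate of fact \emph{(2)}. The remaining effort is the bookkeeping of which regime a given short arc falls into, arranged via the overlap of $S$ and $\mathcal R$ so that no problematic junction is left over.
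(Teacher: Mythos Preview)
Your proof is correct and follows the same strategy as the paper: split $\gamma$ into a compact $C^1$ piece and the tail $\bigcup_{n\ge N}\gamma_n^{(n)}$ handled by Lemma~\ref{x1x2_in_union_of_big_bumps}, with the transition at $0$ treated separately. For that transition the paper projects to the base curves $\gamma_n^{(1)}$ and invokes Claims~\ref{D_gammak_to_gamman}, \ref{asymconf_gamma1}, \ref{slope} directly, whereas you reuse Lemma~\ref{x1x2_in_union_of_big_bumps} in the limit as the second endpoint tends to $0$ together with your cone estimate (fact~(2), itself a coarse form of Claim~\ref{slope})---a slightly more modular packaging of the same ingredients.
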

\begin{proof}
For any $\varepsilon>0$, choose $N$ sufficiently large so that $\frac{78}{\sqrt{N}}<\varepsilon$ and
all the preceding lemmas and claims apply for $n \geq N$. Fix such an $\varepsilon$ and $N$. Since
\[
    C_1\cup C_2\cup C_3\cup\left(\bigcup_{n=1}^{N+1}\gamma_n^{(n)}\right)
\]
is a $C^1$ curve, there exists $\delta_1>0$ such that for all $x_1,\,x_2$ on this curve with $\lvert x_1-x_2\rvert<\delta_1$, one has
\[
    \max_{w\in\wideparen{x_1x_2}}
    \frac{\lvert x_1-w\rvert+\lvert w-x_2\rvert}{\lvert x_1-x_2\rvert}
    <1+\varepsilon.
\]

Suppose $x_1\in C_1$, $x_2\in\gamma_n^{(n)}$ with $\lvert x_1-x_2\rvert <\tfrac{1}{2^N}$ for some $n\geq N$. 

If $w\in\gamma_{n_1}^{(n_1)}$ with $n_1\geq n$, then by Claim~\ref{D_gammak_to_gamman} and the definition of $\varepsilon_{n}^{(1)}$, we obtain
\begin{align}\label{continued}
        &\lvert x_{1}-w\rvert +\lvert w-x_{2}\rvert \notag \\
    \leq& |x_{1}-w^{(1)}|+|w^{(1)}-w|+|w-w^{(1)}|+|w^{(1)}-x_{2}^{(1)}|+|x_{2}^{(1)}-x_{2}| \notag\\
    \leq& \frac{3}{2^{n} \sqrt{n}}+|x_{1}-w^{(1)}|+|w^{(1)}-x_{2}^{(1)}|.
\end{align}
Since $x_2\in\gamma_n^{(n)}$ implies 
\[
    \lvert x_{1}-x_{2}^{(1)}\rvert\geq \operatorname{Re}\big( x_{2}^{(1)}-x_1\big)  \geq \frac{1}{2^{n}},
\] 
we further continue from \eqref{continued} so that
\begin{align}
        &\lvert x_{1}-w\rvert +\lvert w-x_{2}\rvert \notag \\
    \leq& \tfrac{3}{\sqrt{n}}\lvert x_{1}-x_{2}^{(1)}\rvert +\lvert x_{1}-w^{(1)}\rvert +\lvert w^{(1)}-x_{2}^{(1)}\rvert  \notag\\
    \leq& \big(1+\tfrac{3}{\sqrt{N}}\big)\big(\lvert x_{1}-M_{n_1}\rvert +\lvert M_{n_1}-w^{(1)}\rvert +\lvert w^{(1)}-x_{2}^{(1)}\rvert \big)\notag\\
    \leq&\big(1+\tfrac{3}{\sqrt{N}}\big)\left(\big(1+\tfrac{1}{N}\big) \operatorname{Re}(M_{n_1}-x_{1})+\big(1+\tfrac{\pi}{N}\big)\lvert x_{2}^{(1)}-M_{n_1}\rvert \right)\label{eq: semi}\\
    \leq& \big(1+\tfrac{3}{\sqrt{N}}\big)\big(1+\tfrac{\pi}{N}\big)\big(\operatorname{Re}(M_{n_1}-x_{1})+\big(1+\tfrac{8}{\sqrt{N}}\big) \operatorname{Re}(x_{2}^{(1)}-M_{n_1})\big)\label{eq: |x_2-M_n1|}\\
    \leq& \big(1+\tfrac{3}{\sqrt{N}}\big)\big(1+\tfrac{8}{\sqrt{N}}\big)\big(1+\tfrac{\pi}{N}\big) \operatorname{Re}(x_{2}^{(1)}-x_{1})\notag\\
    \leq& \big(1+\tfrac{12}{\sqrt{N}}\big) \operatorname{Re}(x_{2}^{(1)}-x_{1}),\label{eq: lem2_1_num}
\end{align}
where inequality \eqref{eq: semi} is obtained by a direct estimate on the semicircle and the use of Claim~\ref{asymconf_gamma1}, while inequality \eqref{eq: |x_2-M_n1|} follows from Claim~\ref{slope}.

Moreover, if $w\in C_1$, then by a parallel argument with the estimates $\lvert x_{2}^{(1)}-x_{2}\rvert \leq \frac{1}{2^n\sqrt{n}}$ and
$\lvert x_{1}-x_{2}^{(1)}\rvert \geq \frac{1}{2^n}$,
we have
\begin{align}\label{eq: lem2_2_num}
&\lvert x_{1}-w\rvert +\lvert w-x_{2}\rvert \notag\\
\leq& \lvert x_{1}-w\rvert +\lvert w-x_{2}^{(1)}\rvert +\lvert x_{2}^{(1)}-x_{2}\rvert  \notag\\
\leq& \lvert x_{1}-w\rvert +\lvert w-x_{2}^{(1)}\rvert +\tfrac{1}{\sqrt{N}}\lvert x_{1}-x_{2}^{(1)}\rvert  \notag\\
\leq& \big(1+\tfrac{1}{\sqrt{N}}\big)\big(\lvert x_{1}-w\rvert +\lvert w-x_{2}^{(1)}\rvert \big) \notag\\
\leq&\big(1+\tfrac{1}{\sqrt{N}}\big)\left(\big(1+\tfrac{1}{N}\big)\operatorname{Re}(w-x_{1})+\big(1+\tfrac{8}{\sqrt{N}}\big)\operatorname{Re}(x_{2}^{(1)}-w)\right) \notag\\
\leq&\big(1+\tfrac{1}{\sqrt{N}}\big)\big(1+\tfrac{8}{\sqrt{N}}\big)\operatorname{Re}(x_{2}^{(1)}-x_{1})\notag\\
\leq& \big(1+\tfrac{10}{\sqrt{N}}\big)\operatorname{Re}(x_{2}^{(1)}-x_{1}). 
\end{align}

On the other hand, 
\begin{align}\label{eq: lem2_de}
    \lvert x_{1}-x_{2}\rvert &\geq\lvert x_{1}-x_{2}^{(1)}\rvert -\lvert x_{2}-x_{2}^{(1)}\rvert  \geq\lvert x_{1}-x_{2}^{(1)}\rvert -\tfrac{1}{2^{n}\sqrt{n}} \notag\\
    &\geq\big(1-\tfrac{1}{\sqrt{N}}\big)\lvert x_{1}-x_{2}^{(1)}\rvert  \geq\big(1-\tfrac{1}{\sqrt{N}}\big)\operatorname{Re}(x_{2}^{(1)}-x_{1}).
\end{align}
From \eqref{eq: lem2_1_num}, \eqref{eq: lem2_2_num}, and \eqref{eq: lem2_de}, it follows that
\begin{align*}
    \max_{w\in\wideparen{x_1x_2}}&\frac{\lvert x_{1}-w\rvert +\lvert w-x_{2}\rvert }{\lvert x_{1}-x_{2}\rvert }\leq\tfrac{1+\tfrac{12}{\sqrt{N}}}{1-\tfrac{1}{\sqrt{N}}}<1+\tfrac{14}{\sqrt{N}}<1+\varepsilon.
\end{align*}

Now, we set 
\[
    \delta=\min\left\{\delta_1,\tfrac{1}{2^N}\right\}.
\] 
If $\lvert x_1-x_2\rvert <\delta$, $x_1,\ x_2$ must fall into one of the following cases:
\begin{itemize}
    \item $x_1, x_2 \in C_1\cup C_2\cup C_3\cup\left(\bigcup_{n=1}^{N+1}\gamma_n^{(n)}\right)$;
    \item $x_1, x_2 \in \bigcup_{n\geq N}\gamma_n^{(n)}$;
    \item $x_1\in C_1$, $x_2 \in \bigcup_{n\geq N}\gamma_n^{(n)}$.
\end{itemize}
Together with Lemma~\ref{x1x2_in_union_of_big_bumps}, the above analysis yields
\[
    \max_{w\in\wideparen{x_1x_2}}\frac{\lvert x_1-w\rvert +\lvert w-x_2\rvert }{\lvert x_1-x_2\rvert }<1+\varepsilon.
\]
This completes the proof.
\end{proof}

\begin{theorem}\label{chord-arc}
    The curve $\gamma$ constructed in Section~\ref{counterexample} is chord-arc. In particular, this implies that $\gamma$ has
    no self-intersection, that is, $\gamma$ is a simple Jordan curve.
\end{theorem}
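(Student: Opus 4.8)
The goal is to produce an absolute constant $C\ge 1$ with $\ell\big(\wideparen{x_1x_2}\big)\le C\,|x_1-x_2|$ for every pair $x_1,x_2\in\gamma$, where $\wideparen{x_1x_2}$ is the subarc of smaller diameter; since a curve with this property has an injective natural parametrization (a coincidence $x_1=x_2$ would force a nondegenerate subarc to have length $0$), this also gives that $\gamma$ is a Jordan curve. I would first record that $\gamma$ has finite length: arguing as in the proof of Theorem~\ref{not_asymp_smooth} but using the \emph{upper} bound of Lemma~\ref{lengthest_of_embeded_curve} and the curvature bound $|\kappa|\le 2\sqrt{\beta_n^{(k)}}$ from \eqref{curvature}, one gets $\ell\big(\gamma_n^{(n)}\big)\le\big(\prod_{k=1}^{n-1}(1+6\beta_n^{(k)})\big)\cdot 2^{-n}(1+4/n^2)\le 2e^{3}\cdot 2^{-n}$, so that $\sum_n\ell\big(\gamma_n^{(n)}\big)<\infty$ and hence $L:=\ell(\gamma)<\infty$ after adding the finite lengths of $C_1,C_2,C_3$.

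The central new ingredient is an arc-length analogue of Lemma~\ref{x1x2_in_big_bump}: for all sufficiently large $n$ and all $x_1,x_2\in\gamma_n^{(n)}$ (for which $\wideparen{x_1x_2}\subset\gamma_n^{(n)}$ is indeed the smaller arc, since $\operatorname{diam}\gamma_n^{(n)}\lesssim 2^{-n}$ whereas the complementary arc contains $C_2$),
\[
    \frac{\ell\big(\wideparen{x_1x_2}\big)}{|x_1-x_2|}\le C_0
\]
for an absolute constant $C_0$. To prove this I would project $x_1,x_2$ successively onto $\gamma_n^{(j)}$ as in the proof of Lemma~\ref{x1x2_in_big_bump}, and let $j$ be minimal such that $x_1^{(j)},x_2^{(j)}$ lie on the same or on adjacent bumps of $\gamma_n^{(j)}$. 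Decomposing $\wideparen{x_1^{(l+1)}x_2^{(l+1)}}$ into at most two partial bumps and a string of full bumps of $\gamma_n^{(l+1)}$, Claim~\ref{projection} controls the partial ones and Lemma~\ref{lengthest_of_embeded_curve} (with \eqref{curvature}) the full ones, which gives $\ell\big(\wideparen{x_1^{(l+1)}x_2^{(l+1)}}\big)\le(1+8\beta_n^{(l)})\,\ell\big(\wideparen{x_1^{(l)}x_2^{(l)}}\big)$; iterating over $j\le l\le n-1$ and using $\sum_l 8\beta_n^{(l)}<4$ yields $\ell\big(\wideparen{x_1x_2}\big)\le e^{4}\,\ell\big(\wideparen{x_1^{(j)}x_2^{(j)}}\big)$. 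Now Lemma~\ref{x1x2_in_small_bump} (or the graph bound \eqref{n1=n2} when $j=1$) gives $\ell\big(\wideparen{x_1^{(j)}x_2^{(j)}}\big)\le(1+32/\sqrt n)\,|x_1^{(j)}-x_2^{(j)}|$, while Claim~\ref{D_gammak_to_gamman} together with $|x_1^{(j)}-x_2^{(j)}|\ge\varepsilon_n^{(j)}/2$ gives $|x_1-x_2|\ge(1-16/\sqrt n)\,|x_1^{(j)}-x_2^{(j)}|$; combining these yields the display with, say, $C_0=2e^4$.

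The remaining cases then run in parallel with the proofs of Lemma~\ref{x1x2_in_union_of_big_bumps} and Theorem~\ref{asymp_conf}, with the bounded-turning quantity $|x_1-w|+|w-x_2|$ replaced throughout by $\ell\big(\wideparen{x_1x_2}\big)$ and with the displayed estimate used in place of Lemma~\ref{x1x2_in_big_bump}. Concretely: (i) if $x_1,x_2$ lie on the consecutive pieces $\gamma_{n_2+1}^{(n_2+1)}$ and $\gamma_{n_2}^{(n_2)}$ with $n_2$ large, split $\wideparen{x_1x_2}$ at $M_{n_2}$, bound each half by $C_0$ times its chord, and use Claim~\ref{slope} — applied through the successive levels of the construction to the bumps meeting $M_{n_2}$ — to see that both pieces are, near $M_{n_2}$, graphs over the $x$-axis with tiny slope, so that $|x_1-M_{n_2}|+|M_{n_2}-x_2|\le(1+\varepsilon)(\operatorname{Re}x_2-\operatorname{Re}x_1)\le(1+\varepsilon)|x_1-x_2|$; (ii) if $x_1\in\gamma_{n_1}^{(n_1)}$, $x_2\in\gamma_{n_2}^{(n_2)}$ with $n_1\ge n_2+2$ and $n_2$ large, then $\ell\big(\wideparen{x_1x_2}\big)\le\sum_{m\ge n_2}\ell\big(\gamma_m^{(m)}\big)\le C'\,2^{-n_2}$ while $|x_1-x_2|\ge\operatorname{Re}x_2-\operatorname{Re}x_1\ge c\,2^{-n_2}$ by the $x$-ranges $[2^{-m},2^{-(m-1)}]$ and Claim~\ref{D_gammak_to_gamman}; (iii) if $x_1\in C_1$ and $x_2\in\bigcup_{n\ge N}\gamma_n^{(n)}$ with $|x_1-x_2|$ small, then $x_1$ is near the origin, and we split $\wideparen{x_1x_2}$ there and argue as in (i), using that $C_1$ is a circular arc; (iv) if $|x_1-x_2|\ge 2^{-N}$ the bound $L\cdot 2^{N}$ is trivial; and (v) if $x_1,x_2$ both lie on the compact subarc $C_1\cup C_2\cup C_3\cup\bigcup_{n\le N+1}\gamma_n^{(n)}$, this subarc is $C^1$ (hence chord-arc, being a compact $C^1$ Jordan arc with non-vanishing tangent). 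Choosing $N$ large enough for all cited lemmas and collecting the finitely many constants gives the desired uniform $C$, and with it the theorem. The step I expect to be the main obstacle is the junction analysis in (i) and (iii): one must iterate Claim~\ref{slope} through every level of the nested bump construction and verify that the ``graph over the $x$-axis with tiny slope'' structure near each junction $M_{n_2}$ (and near the origin) is preserved at each level, so that the two chords meeting at the junction really do sum to at most $(1+\varepsilon)|x_1-x_2|$; the careful bookkeeping in the iterated Claim~\ref{projection} estimate and the (routine) identification of the smaller-diameter subarc in each case round out the work.
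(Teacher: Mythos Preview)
Your proposal is correct and follows essentially the same route as the paper: both iterate Claim~\ref{projection} to control the growth of arc length from level $j$ to level $n$, invoke Lemma~\ref{x1x2_in_small_bump} at the base level, and use the upper bound in Lemma~\ref{lengthest_of_embeded_curve} to sum $\ell(\gamma_n^{(n)})$. The only organizational difference is that the paper, having already established asymptotic conformality in Theorem~\ref{asymp_conf}, reduces at once to small arcs through the single non-$C^1$ point $O$ and uses the bounded-turning bound $|x_1-M_n|+|M_n-x_2|\le 2|x_1-x_2|$ at the junction, whereas you carry out the full case split and handle the junctions via Claim~\ref{slope} directly; the underlying estimates are the same.
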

\begin{proof}
Notice that the curve $\gamma$ is a $C^1$ curve except at the point $O(0,0)$. To prove $\gamma$ is a chord-arc curve, it suffices to show that for any sufficiently small subarc $\wideparen{x_1x_2}$ containing $M$, there exists a uniform constant $C$ such that
\[
    \frac{\ell\big(\wideparen{x_1x_2}\big)}{\lvert x_1-x_2\rvert }\leq C.
\]
By Theorem~\ref{asymp_conf}, $\gamma$ is asymptotically conformal. Taking $\varepsilon=1$, there exists $\delta>0$ such that whenever $\lvert x_1-x_2\rvert <\delta$,
\begin{align}\label{eq: ca_ac}
    \max_{w\in\wideparen{x_1x_2}}\frac{\lvert x_1-w\rvert +\lvert w-x_2\rvert }{\lvert x_1-x_2\rvert }<2.
\end{align}

Assume $x_1\in C_1$, $x_2\in\gamma_n^{(n)}$ with $\lvert x_1-x_2\rvert <\delta$ (if necessary, choosing a smaller $\delta$ so that $n$ could be sufficiently large satisfying all the preceding lemmas and claims). Let $x_2^{(k)}$ denote the projection of $x_2=x_2^{(n)}$ onto $\gamma_n^{(k)}$ for $1\leq k\leq n$.

If $\lvert M_n-x_{2}^{(n-1)}\rvert <\frac{\varepsilon_{n}^{(n-1)}}{2}$, then $M_n$ and $x_{2}^{(n)}$ lie on the same bump of $\gamma_{n}^{(n)}$. By Lemma~\ref{x1x2_in_small_bump}, it follows that
\begin{align}\label{ca_Mn_x2_n}
    \frac{\ell\big(\wideparen{M_nx_2}\big)}{\lvert M_n-x_2\rvert }\leq 2.
\end{align}
Otherwise, by the choice of $\varepsilon_n^{(j)}$, there must exist $1\leq j\leq n-1$ such that
\[
    \begin{cases}
       | M_n - x_2^{(j)}|\geq \tfrac{\varepsilon_n^{(j)}}{2}, \\
  |M_n - x_2^{(j-1)}| < \tfrac{\varepsilon  _n^{(j)}}{2}.
    \end{cases}
\]
Thus $M_n$ and $x_{2}^{(j)}$ lie on the same bump of $\gamma_{n}^{(j)}$, and by Lemma~\ref{x1x2_in_small_bump} again, 
\begin{align}\label{eq: ca_Mn_x2_j}
    \frac{\ell\big(\wideparen{M_nx_2^{(j)}}\big)}{\lvert M_n-x_2^{(j)}\rvert }\leq 2.
\end{align}
Note that if the first condition holds for $j=1$, we ignore the second condition in this case.

By Claim~\ref{projection} and inequality \eqref{eq: ca_Mn_x2_j}, we obtain
\begin{align}\label{eq: ca_Mn_x2_nu}
  \ell\big(\wideparen{M_n x_2}\big)
  &= \prod_{k=j+1}^n \frac{\ell\big(\wideparen{M_n x_2^{(k)}}\big)}{\ell\big(\wideparen{M_n x_2^{(k-1)}}\big)}\cdot \ell\big(\wideparen{M_n x_2^{(j)}}\big) \notag\\
  &\leq 2 \prod_{k=j+1}^n ( 1 + 8\beta_{n}^{(k)}) |M_n - x_2^{(j)}| \notag\\
  &= 2 \exp\left(\sum_{k=j+1}^n \log(1 + 8\beta_n^{(k)})\right) |M_n - x_2^{(j)}| \notag\\
  &\leq 2 \exp\left(8\sum_{k=j+1}^n \beta_n^{(k)}\right) |M_n - x_2^{(j)}|\leq 2e^8 |M_n - x_2^{(j)}|.
\end{align}
Moreover, together with Claim~\ref{D_gammak_to_gamman}, 
the condition $| M_n - x_2^{(j)}|\geq \tfrac{\varepsilon_n^{(j)}}{2}$ yields
\begin{align}\label{eq: ca_Mn_x2_de}
  |M_n - x_2|
  &\geq | M_n - x_2^{(j)}| - |x_2^{(j)} - x_2| \geq |M_n - x_2^{(j)}| - \tfrac{4}{\sqrt{n}} \varepsilon_n^{(j)} \notag\\
  &\geq \bigl(1 - \tfrac{8}{\sqrt{n}}\bigr)|M_n - x_2^{(j)}| \geq \tfrac{1}{2}|M_n - x_2^{(j)}|
\end{align}
for sufficiently large $n$.

As in the proof of Theorem~\ref{not_asymp_smooth}, by the construction process of $\gamma$ together with Lemma~\ref{lengthest_of_embeded_curve}, we deduce
\begin{align}\label{eq: ratiosup}
    \frac{\ell\bigl(\gamma_{n}^{(k)}\bigr)}{\ell\bigl(\gamma_{n}^{(k-1)}\bigr)}
    &=\frac{\sum_{i,\, m} \ell\bigl(\lambda_{i_m}^{(k-1)}\bigr)}{\sum_{i,\, m} \ell\bigl(\sigma_{i_m}^{(k-1)}\bigr)} \leq \sup_{i,\, m} \frac{\ell\bigl(\lambda_{i_{m}}^{(k-1)}\bigr)}{\ell\bigl(\sigma_{i_{m}}^{(k-1)}\bigr)}= \sup_{i,\, m} \ell\bigl(\Lambda_{i_{m}}^{(k-1)}\bigr)\notag\\
    &\leq\sup_{i}\bigl( 1+4\beta_{n}^{(k-1)}+\alpha_{i}^{(k-1)}\varepsilon_{n}^{(k-1)} K_{n}^{(k-1)}\sqrt{\beta_{n}^{(k-1)}}\bigr)\notag\\
    &\leq 1+4\beta_{n}^{(k-1)}+2\cdot\beta_{n}^{(k-1)}=1+6\beta_{n}^{(k-1)}
\end{align}
for sufficiently large $n$.
Therefore, using inequalities \eqref{eq: ratiosup} and \eqref{eq: length_est_gamma_n^1}, it follows that
\begin{align}\label{eq: ca_length_gamma_n}
    \ell\bigl(\gamma_n^{(n)}\bigr)
    &\leq\prod_{k=2}^n(1+6\beta_n^{(k-1)})\cdot\tfrac{1}{2 ^n}\bigl(1+\tfrac{4}{n^2}\bigr)\leq\tfrac{1}{2^{n-1}}\exp\left(6\sum_{k=2}^n\beta_n^{(k-1)}\right)\notag\\
    &=\tfrac{1}{2^{n-1}}\exp\left(\tfrac{6n(n-1)}{2n^2}\right)\leq\tfrac{1}{2^n}\cdot 2e^3.
\end{align}

Combining \eqref{eq: ca_length_gamma_n}, \eqref{eq: ca_Mn_x2_nu}, \eqref{eq: ca_Mn_x2_de} and the semicircle estimate, we conclude
\begin{align*}
  \ell\bigl(\wideparen{x_1 x_2}\bigr)
  &= \ell\bigl(\wideparen{x_1 O}\bigr) + \ell\bigl(\wideparen{O M_n}\bigr) + \ell\bigl(\wideparen{M_n x_2}\bigr) \\
  &\leq 2|x_1 - O| + \sum_{k=n+1}^\infty \ell(\gamma_k^{(k)}) + 4 e^8 |M_n - x_2| \\
  &\leq 2|x_1 - O| + 2 e^3 \sum_{k=n+1}^\infty \frac{1}{2^k} + 4 e^8 |M_n - x_2| \\
  &\leq 2|x_1 - O| + 2 e^3 |O - M_n| + 4 e^8 |M_n - x_2| \\
  &\leq 4 e^3 |x_1 - M_n| + 4 e^8 |M_n - x_2|\leq 8e^8|x_1-x_2|,
\end{align*}
where the last line follows from \eqref{eq: ca_ac}. Hence $\gamma$ is chord-arc. 
%completing the verification of the counterexample.
\end{proof}

\section{Characterization of Asymptotically Smooth Curves}\label{charac}

In this section, we prove the following characterization of asymptotically smooth curves by giving
an additional property to asymptotic conformality.

\begin{theorem}\label{main2}
    A chord-arc curve $\Gamma$ is asymptotically smooth if and only if it is uniformly approximable and asymptotically conformal.
\end{theorem}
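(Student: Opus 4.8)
The plan is to prove the two implications separately; the forward one is short, and the converse carries the substance.

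\emph{Asymptotic smoothness $\Rightarrow$ uniform approximability and asymptotic conformality.} Asymptotic conformality is immediate, since for $w\in\wideparen{ab}$ one always has $|a-w|+|w-b|\le \ell(\wideparen{aw})+\ell(\wideparen{wb})=\ell(\wideparen{ab})$, and asymptotic smoothness bounds the last term by $(1+\varepsilon)|a-b|$ once $|a-b|$ is small. For uniform approximability, fix $\varepsilon>0$ and pick $\delta'>0$ from asymptotic smoothness so that $\ell(\wideparen{cd})\le(1+\varepsilon)|c-d|$ whenever $\ell(\wideparen{cd})<\delta'$ (the hypothesis on $\ell$ suffices because $|c-d|\le\ell(\wideparen{cd})$); shrinking $\delta'$ and using that $\Gamma$ is chord-arc, we may assume every such subarc is a minor arc. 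With $L:=\ell(\Gamma)<\infty$ set $n:=\lceil L/\delta'\rceil+1$. Any subarc $\wideparen{ab}$ splits into $n$ consecutive subarcs of equal length $\ell(\wideparen{ab})/n\le L/n<\delta'$, so $\ell(\wideparen{a_{i-1}a_i})\le(1+\varepsilon)|a_i-a_{i-1}|$ for every $i$, and summing gives $\ell(\wideparen{ab})\le(1+\varepsilon)\sum_{i=1}^n|a_i-a_{i-1}|$. As $n$ depends only on $\varepsilon$, this is uniform approximability.

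\emph{A counting lemma.} For the converse I will use the following, proved by induction on $n$: if $a_0,\dots,a_n$ appear in this order along an arc and $|a_j-w|+|w-a_k|\le(1+\theta)|a_j-a_k|$ for all $0\le j<k\le n$ and all $w$ on the subarc from $a_j$ to $a_k$, then $\sum_{i=1}^n|a_i-a_{i-1}|\le(1+\theta)^{n-1}|a_0-a_n|$. The inductive step applies the statement for $n-1$ to $a_1,\dots,a_n$, giving $\sum_{i=2}^n|a_i-a_{i-1}|\le(1+\theta)^{n-2}|a_1-a_n|$; then, since $(1+\theta)^{n-2}\ge 1$ and $|a_0-a_1|+|a_1-a_n|\le(1+\theta)|a_0-a_n|$ (the hypothesis with $(j,k,w)=(0,n,a_1)$),
\[
\sum_{i=1}^n|a_i-a_{i-1}|\le|a_0-a_1|+(1+\theta)^{n-2}|a_1-a_n|\le(1+\theta)^{n-2}\bigl(|a_0-a_1|+|a_1-a_n|\bigr)\le(1+\theta)^{n-1}|a_0-a_n|.
\]

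\emph{Uniform approximability and asymptotic conformality $\Rightarrow$ asymptotic smoothness.} Given $\eta>0$, first invoke uniform approximability with $\varepsilon_1:=\eta/3$ to fix the integer $n$; only afterwards, with $n$ frozen, choose $\theta>0$ with $(1+\theta)^{n-1}\le 1+\eta/3$ and take $\delta_0>0$ from asymptotic conformality for this $\theta$. For $a,b$ with $|a-b|<\delta:=\delta_0/\bigl(2(1+\theta)\bigr)$, asymptotic conformality at $(a,b)$ yields $|a-a_i|\le(1+\theta)|a-b|$ for each partition point, hence $|a_j-a_k|\le 2(1+\theta)|a-b|<\delta_0$ for all $j,k$; the relevant subarcs are minor arcs since $|a-b|$ is small relative to $\operatorname{diam}\Gamma$, so the counting lemma gives $\sum_{i=1}^n|a_i-a_{i-1}|\le(1+\theta)^{n-1}|a-b|$. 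Combined with the approximating partition, $\ell(\wideparen{ab})\le(1+\varepsilon_1)\sum_{i=1}^n|a_i-a_{i-1}|\le(1+\tfrac{\eta}{3})^2|a-b|\le(1+\eta)|a-b|$ for $\eta$ small, while $\ell(\wideparen{ab})\ge|a-b|$ trivially; hence $\Gamma$ is asymptotically smooth.

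The crux is this converse direction: one must bound the polygonal length of the approximating partition by the single chord $|a-b|$ using nothing but asymptotic conformality, which is what the counting lemma accomplishes. The delicate bookkeeping lies in the order of the quantifiers — the number $n$ of partition points must be fixed (via $\varepsilon_1$) before the conformality constant $\theta$ is chosen, since otherwise $(1+\theta)^{n-1}$ cannot be controlled — and in checking that asymptotic conformality genuinely applies to every pair of partition points, not only to the endpoints $a,b$ of $\wideparen{ab}$.
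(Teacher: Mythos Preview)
Your proof is correct and follows essentially the same strategy as the paper's: the forward direction via an equal-length partition, and the converse by first fixing $n$ from uniform approximability and then iterating asymptotic conformality to bound $\sum_{i=1}^n|a_i-a_{i-1}|$ by $(1+\theta)^{n-1}|a-b|$. The only cosmetic differences are that you argue directly rather than by contradiction, and you secure $|a_j-a_k|<\delta_0$ for all partition pairs via asymptotic conformality itself (using $|a-a_i|\le(1+\theta)|a-b|$) rather than via the chord-arc bound $\ell(\wideparen{ab})\le C|a-b|$ as the paper does.
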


\begin{proof}
Suppose first that $\Gamma$ is asymptotically smooth. Then for every $\varepsilon>0$ there exists $\delta>0$ such that for any subarc $\wideparen{xy}$ with $|x-y|< \delta$ one has
\begin{align}\label{eq:sm1}
    \ell(\wideparen{xy}) \leq (1+\varepsilon)|x-y|.
\end{align}
Fix such an $\varepsilon>0$ and and the corresponding $\delta>0$. Thus we can choose an integer $L\in\mathbb{N}$ such that
\begin{align*}
    \frac{\ell(\Gamma)}{L}< \delta.
\end{align*}
Let $\gamma$ be an arbitrary subarc of $\Gamma$ with endpoints $a$ and $b$. Partition $\gamma$ into $L$ subarcs $\wideparen{a_{i-1}a_i}$ $(i=1,\ldots, L)$ of equal length, where $a_0=a,\ a_L=b$. Then for each $i$  
\[
    |a_i-a_{i-1}|\leq \ell(\wideparen{a_{i-1}a_i})\leq \frac{\ell(\Gamma)}{L} < \delta,
\]
and thus, by \eqref{eq:sm1},
\begin{align}\label{eq:sm2}
\ell(\wideparen{a_{i-1}a_i}) \leq (1+\varepsilon)|a_i-a_{i-1}|.
\end{align}
Summing \eqref{eq:sm2} over $i$ yields
\begin{align*}
    \ell(\gamma)=\sum_{i=1}^L\ell(\wideparen{a_{i-1}a_i})\leq (1+\varepsilon)\sum_{i=1}^L|a_i-a_{i-1}|,
\end{align*}
which shows that $\Gamma$ is uniformly approximable. Moreover, asymptotic conformality follows immediately from the definition of asymptotic smoothness.

Conversely, assume that $\Gamma$ is uniformly approximable and asymptotically conformal. 
Suppose, for contradiction, that $\Gamma$ fails to be asymptotically smooth. Then there exists $\varepsilon_0>0$ such that for every $m\in\mathbb{N}$ one can find a subarc $\wideparen{a_m b_m}$ with
\begin{align}\label{eq:contradict}
    |a_m-b_m|\leq \frac{1}{m},\quad \frac{\ell(\wideparen{a_m b_m})}{|a_m-b_m|}\geq 1+\varepsilon_0.
\end{align}

Since $\Gamma$ is asymptotically conformal, for every $\varepsilon >0$ there exists $\delta>0$ such that for all $x,\,y\in\Gamma$ with $|x-y|< \delta$ and $w\in\wideparen{xy}$, 
\begin{align}\label{eq:conf}
    \max_{w\in \wideparen{xy}}\frac{|x-w|+|w-y|}{|x-y|}\leq 1+\varepsilon .
\end{align}
In addition, as $\Gamma$ is chord-arc, there exists $C\geq 1$ with
\begin{align}\label{eq:chordarc}
    \ell(\wideparen{xy}) \leq C|x-y| \qquad \text{for all }x,y\in\Gamma.
\end{align}

Fix $0<\varepsilon_1<\varepsilon_0$. By uniform approximability, there exists $L\in\mathbb{N}$ such that for any subarc $\wideparen{ab}$ there is a partition $a=a_0<a_1<\cdots<a_L=b$ for which
\begin{align}\label{eq:UA}
    \ell(\wideparen{ab}) \leq (1+\varepsilon_1)\sum_{i=1}^L |a_i-a_{i-1}|.
\end{align}
We take $m \in \mathbb N$ so large that $C/m \leq \delta$. For the subarc $\wideparen{ab}=\wideparen{a_m b_m}$ from \eqref{eq:contradict}, estimate \eqref{eq:chordarc} gives
\begin{align*}
    \ell(\wideparen{ab}) \leq C|a-b| \leq \frac{C}{m}\leq \delta,
\end{align*}
which implies that each chord joining any distinct two points $a_i, a_j$ 
in the partition $a_0<a_1<\cdots<a_L$ also satisfies 
\[
    |a_i-a_j|\leq \ell(\wideparen{a_ia_j})\leq \delta.
\]
Then \eqref{eq:conf} can be applied to the partition, 
and iterating its application yields
\begin{align}\label{eq:iterconf}
    \sum_{i=1}^L |a_i-a_{i-1}| &\leq (1+\varepsilon )\Bigl(|a_L-a_{L-2}|+\sum_{i=1}^{L-2}|a_i-a_{i-1}|\Bigr)\notag\\
    &\leq\cdots
    \leq (1+\varepsilon )^L |a-b|.
\end{align}

Finally, combining \eqref{eq:contradict}, \eqref{eq:UA}, and \eqref{eq:iterconf}, we obtain
\begin{align*}
    (1+\varepsilon_0)|a-b| 
    &\leq \ell(\wideparen{ab})
    \leq (1+\varepsilon_1)\sum_{i=1}^L |a_i-a_{i-1}|\\
    &\leq (1+\varepsilon_1)(1+\varepsilon )^L |a-b|.
\end{align*}
Choosing $\varepsilon >0$ sufficiently small so that $(1+\varepsilon_1)(1+\varepsilon )^L<1+\varepsilon_0$ leads to a contradiction.

Therefore, $\Gamma$ must be asymptotically smooth, and the proof is complete.
\end{proof}
\begin{remark}
    Uniform approximability is a global property meaning that the length of any subarc of $\gamma$ can be uniformly approximated by the length of polygonal lines with a bounded number of vertices, independently of the choice of the subarc. While asymptotic smoothness is a local property, ensuring that at arbitrarily small scales, the arc length approaches the chord length. 
\end{remark}

%%%%%%%%%%%%%%%%%%%%%%%%%%%%%%%%%%%%%%%%%%%%%%%%%

\section{Teichm\"uller Spaces}\label{Teich}
In this section, motivated by Theorem \ref{main1}, we introduce an intermediate Teichm\"uller space
between the BMO and VMO Teichm\"uller spaces and discuss its properties and related problems (Theorem~\ref{intermediate}).
As preliminary background, we review the theory of Teichm\"uller spaces relevant to our arguments.
This section may also serve as preliminaries on holomorphic function spaces and
spaces of circle homeomorphisms that appear in the main theorems stated in the introduction.
For fundamental results on quasiconformal mappings, see \cite{Ah}, and
for Teichm\"uller spaces, see \cite{Le} and \cite{N}.

\subsection{The Little Universal Teichm\"uller Space}
Let 
$M(\mathbb D^*)=
\{\mu \in L^\infty(\mathbb D^*) :\Vert \mu \Vert_\infty<1\}$ be 
the space of Beltrami coefficients on the exterior unit disk $\mathbb D^*=\{|z|>1\} \cup \{\infty\}$.
For $\mu \in M(\mathbb D^*)$, we denote by $f^\mu$ the normalized quasiconformal self-homeomorphism of $\mathbb D^*$.
It extends homeomorphically to the unit circle $\mathbb S$ as a quasisymmetric self-homeomorphism of $\mathbb S$,
which we denote by the same symbol $f^\mu$. The normalization is imposed by fixing the three points $1$, $i$, and $-1$ on $\mathbb S$.

Two Beltrami coefficients $\mu, \nu \in M(\mathbb D^*)$ are said to be Teich\-m\"ul\-ler equivalent if $f^\mu$ and $f^\nu$ coincide on $\mathbb S$. The \emph{universal Teich\-m\"ul\-ler space} $T$ is the quotient of $M(\mathbb D^*)$ 
by Teich\-m\"ul\-ler equivalence. We denote this projection by 
$\pi: M(\mathbb D^*) \to T$ and write $\pi(\mu)=[\mu]$.

To endow $T$ with a complex structure, we introduce the complex Banach space $A(\mathbb D)$ of holomorphic functions $\varphi$ on the unit disk $\mathbb D = \{z : |z|<1\}$ satisfying
$$
\Vert \psi \Vert_A=\sup_{|z|<1}\,(1-|z|^2)^2|\psi(z)|<\infty.
$$  
For every $\mu \in M(\mathbb D^*)$, let $f_\mu$ be the quasiconformal self-homeomorphism of the extended complex plane $\widehat {\mathbb C}$ with complex dilatation $0$ on $\mathbb D$ and $\mu$ on $\mathbb D^*$ , normalized by $f_\mu(0)=0$, $(f_\mu)'(0)=1$, and $f_\mu(\infty)=\infty$. 
Then we define the Bers projection $S$ by assigning to $\mu$ the Schwarzian derivative $S_{f_\mu}|_{\mathbb D}$ of $f_\mu|_{\mathbb D}$. 
It is well known that $S_{f_\mu}|_{\mathbb D} \in A(\mathbb D)$ and that $S:M(\mathbb D^*) \to A(\mathbb D)$ is a holomorphic split submersion onto its image (see \cite[Section~3.4]{N}).

Moreover, $S$ factors through $\pi$, 
yielding a well-defined injection $\alpha: T \to A(\mathbb D)$ satisfying $\alpha \circ \pi = S$. 
This is called the \emph{Bers embedding} of $T$. By the properties of $S$, 
we conclude that $\alpha$ is a homeomorphism onto its image $\alpha(T) = S(M(\mathbb D^*))$, which is an open subset of
$A(\mathbb D)$.
This provides $T$ with the structure of a complex Banach manifold as a bounded domain in $A(\mathbb D)$.

The closed subspace $M_0(\mathbb D^*)$ of $M(\mathbb D^*)$ consisting of all asymptotically trivial elements is defined as
$$
M_0(\mathbb D^*)=\{\mu \in M(\mathbb D^*) \mid \lim_{t \to 0} {\rm ess\,sup}_{|z|<1+t} |\mu(z)|=0\}.
$$
The \emph{little universal Teichm\"uller space} is defined as $T_0=\pi(M_0(\mathbb D^*))$. 
For $\mu \in M_0(\mathbb D^*)$, the quasisymmetric self-homeomorphism $f^\mu$ of $\mathbb S$ is symmetric.
The set ${\rm S}(\mathbb S)$ of all symmetric self-homeomorphisms of $\mathbb S$
is a subgroup of ${\rm QS}(\mathbb S)$. In fact, it is the characteristic topological subgroup of 
${\rm QS}(\mathbb S)$. This and related results concerning $T_0$ were established by Gardiner and Sullivan \cite{GS}.

For the Banach space $A(\mathbb D)$, we also define its little subspace as
$$
A_0(\mathbb D)=\{\psi \in A(\mathbb D) \mid \lim_{t \to 0} \sup_{|z|>1-t}\,(1-|z|^2)^2|\psi(z)|=0\}.
$$
Then the restriction of $S$ to $M_0(\mathbb D^*)$ is a holomorphic split submersion onto its image in $A_0(\mathbb D)$,
and the restriction of $\alpha$ to $T_0$ is a homeomorphism onto $\alpha(T_0) = S(M_0(\mathbb D^*))$,
which implies that $T_0$ is a complex Banach submanifold of $T$.
In fact, it is known that $\alpha(T_0)=\alpha(T) \cap A_0(\mathbb D)$. 

For $\mu \in M(\mathbb D^*)$, recall that $f_\mu$ is the normalized quasiconformal self-homeomorphism of the extended complex plane $\widehat{\mathbb C}$ whose complex dilatation is $\mu$ on $\mathbb D^*$ and $0$ on $\mathbb D$.
Then
$\varphi=\log(f_\mu|_{\mathbb D})'$ belongs to the space $B(\mathbb D)$ of  
Bloch functions on $\mathbb D$, with the seminorm $\Vert \varphi \Vert_B=\sup_{z \in \mathbb D}(1-|z|^2)|\varphi'(z)|$ finite.  
Here, $\varphi'$ is the pre-Schwarzian derivative $P_{f_\mu}=(f_\mu)''/(f_\mu)'$ of $f_\mu$.  
By identifying Bloch functions that differ by a constant, we endow $B(\mathbb D)$ as a Banach space equipped with the norm $\Vert \varphi \Vert_B$.  
We call the correspondence $L:M(\mathbb D^*) \to B(\mathbb D)$, given by $\mu \mapsto \log(f_\mu|_{\mathbb D})'$,  
the {\it pre-Bers projection}. Then $L$ is holomorphic, as in the case of $S$ (see \cite{Z} and \cite[Appendix]{Teo}).

We define a map $J:B(\mathbb D) \to A(\mathbb D)$ by  
$\varphi \mapsto \psi=\varphi''-(\varphi')^2/2$, following the definition of the Schwarzian derivative.  
It is easy to see that $J$ is holomorphic and satisfies $S=J \circ L$.  
Therefore, $L$ is a holomorphic split submersion onto its image $\widetilde{\mathcal T}=L(M(\mathbb D^*))$ 
in $B(\mathbb D)$. This space is biholomorphically equivalent to the Bers fiber space.
By restricting $J$ to $\widetilde{\mathcal T}$, again denoted by $J$, we obtain a holomorphic split submersion $J:\widetilde{\mathcal T} \to \alpha(T)$. 
In fact, $J$ is the projection of the real-analytic disk bundle $\widetilde{\mathcal T}$ over the universal Teichm\"uller space
$\alpha(T) \cong T$.

We now consider the map $L$ restricted to $M(\mathbb D^*)$.
Its image is contained in the little Bloch space $B_0(\mathbb{D})$, and  
$\widetilde{\mathcal T}_0=L(M_0(\mathbb D))$ is a sub-bundle of $\widetilde{\mathcal T}$ over $\alpha(T_0) \cong T_0$.
In fact, we can verify that $\widetilde{\mathcal T}_0=\widetilde{\mathcal T} \cap B_0(\mathbb D)$.

\subsection{The BMO and VMO Teichm\"uller Space}

The BMO Teichm\"uller space is introduced by Astala and Zinsmeister \cite{AZ}.
We define this and related spaces here.

A locally integrable function $f$ on $\mathbb{S}$ is said to have \emph {bounded mean oscillation} (BMO), denoted by $f\in \mathrm{BMO}(\mathbb{S})$, if
\begin{equation*}
\|f\|_{BMO}\coloneqq\sup_{I\subset \mathbb{S}}\frac{1}{|I|}\int_I |f(x)-f_{I}|\,|dx| < \infty,
\end{equation*}
where the supremum is taken over all intervals $I\subset \mathbb{S}$, $|I|$ denotes the Lebesgue measure of $I$, and $f_{I}=\tfrac{1}{|I|}\int_{I}f$ is the mean value of $f$ over $I$. The space $\mathrm{BMO}$, introduced by John and Nirenberg, is a Banach space modulo constants 
when equipped with the norm $\Vert \cdot \Vert_{BMO}$. We say $f$ has \emph{vanishing mean oscillation} ($\mathrm{VMO}$), denoted by $f\in \mathrm{VMO}(\mathbb{S})$, if $f\in \mathrm{BMO}(\mathbb{S})$ and 
\begin{equation*}
\lim_{|I|\to 0}\frac{1}{|I|}\int_I |f(x)-f_{I}|\,|dx|=0
\end{equation*}
uniformly over intervals $I$.

A Borel measure $m$ on $\mathbb D^*$ is called a {\it Carleson measure} if
$$
\Vert m \Vert_c=\sup_{x \in \mathbb S,\, r>0} \frac{m(\Delta(x,r) \cap \mathbb D^*)}{r}<\infty,
$$
where $\Delta(x,r)$ denotes the disk of radius $r>0$ centered at $x \in \mathbb S$. 
A Carleson measure $m$ is said to be {\it vanishing} if
$$
\lim_{r \to 0} \frac{m(\Delta(x,r) \cap \mathbb D^*)}{r}=0
$$
uniformly in $x$. The set of Carleson measures on $\mathbb D^*$ is denoted by ${\rm CM}(\mathbb D^*)$,  
and the set of vanishing Carleson measures by ${\rm CM}_0(\mathbb D^*)$.  

We define the following subspaces of $M(\mathbb D^*)$, whose elements induce absolutely continuous Carleson measures:
\begin{align*}
M_B(\mathbb D^*)&=\{\mu \in M(\mathbb D^*) \mid dm_\mu=\tfrac{|\mu(z)|^2}{|z|^2-1}\,dxdy,\ m_\mu \in {\rm CM}(\mathbb D^*)\},\\
M_V(\mathbb D^*)&=\{\mu \in M(\mathbb D^*) \mid dm_\mu=\tfrac{|\mu(z)|^2}{|z|^2-1}\,dxdy,\ m_\mu \in {\rm CM}_0(\mathbb D^*)\}.
\end{align*}
The norm on $M_B(\mathbb D^*)$ is defined by $\Vert \mu \Vert_*=\Vert \mu \Vert_\infty +\Vert m_\mu \Vert_c^{1/2}$.  
Then $M_V(\mathbb D^*)$ is a closed subspace of $M_B(\mathbb D^*)$.

The BMO Teich\-m\"ul\-ler space $T_B$ is defined as $\pi(M_B(\mathbb D^*))$,  
and the VMO Teich\-m\"ul\-ler space $T_V$ as $\pi(M_V(\mathbb D^*))$. 

For $\mu \in M_B(\mathbb D^*)$, the associated quasisymmetric self-homeomorphism $f^\mu$ of $\mathbb S$ is strongly quasisymmetric, while for $\mu \in M_V(\mathbb D^*)$, $f^\mu$ is strongly symmetric (see \cite[Theorem 4]{AZ}, \cite[Theorem 4.1]{SW}).
Recall that ${\rm SQS}(\mathbb S)$ is the group of all strongly quasisymmetric self-homeo\-morphisms of $\mathbb S$, and ${\rm SS}(\mathbb S)$ is the subgroup consisting of all strongly symmetric self-homeomorphisms.
It is known from \cite[Lemma 5]{CF} that ${\rm SQS}(\mathbb S)$ forms a subgroup of ${\rm QS}(\mathbb S)$.
Moreover, ${\rm SS}(\mathbb S)$ is the characteristic topological subgroup of ${\rm SQS}(\mathbb S)$ (see \cite{We}).

Since $L^\infty(\mathbb S)$ is dense in ${\rm VMO}(\mathbb S)$ with respect to the BMO norm,
we obtain the following result (see the proof of \cite[Lemma 3.3]{Sh}).

\begin{proposition}\label{SSinS}
A strongly symmetric self-homeomorphism of $\mathbb S$ is symmetric; that is, 
${\rm SS}(\mathbb S) \subset {\rm S}(\mathbb S)$. Hence, the {\rm VMO} Teichm\"uller space $T_V$ is contained in $T_0$.
\end{proposition}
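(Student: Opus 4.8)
The plan is to reduce the symmetry condition for $h$ to a statement about the boundary weight $w=|h'|$ and then to extract that statement from $\log w \in {\rm VMO}(\mathbb S)$. Since $h\in{\rm SS}(\mathbb S)\subset{\rm SQS}(\mathbb S)$, the map $h$ is absolutely continuous and $\log|h'|\in{\rm VMO}(\mathbb S)$. Because $h$ carries $\mathbb S$ onto the unit circle, I would write $h(e^{iu})=e^{i\beta(u)}$ with $\beta$ increasing, absolutely continuous, and $\beta(u+2\pi)=\beta(u)+2\pi$, so that $\beta'=w$ a.e. and $\beta(v)-\beta(u)=\int_u^v w$. A one-line computation then gives, for $I^+=[x,x+t]$, $I^-=[x-t,x]$, and $a=\int_{I^+}w$, $b=\int_{I^-}w$,
\[
\frac{h(e^{i(x+t)})-h(e^{ix})}{h(e^{ix})-h(e^{i(x-t)})}=e^{\,i(a+b)/2}\,\frac{\sin(a/2)}{\sin(b/2)}.
\]
Since $\beta$ is uniformly continuous, $a+b=\beta(x+t)-\beta(x-t)\to0$ uniformly in $x$ as $t\to0$; hence the phase factor tends to $1$ uniformly, and $\frac{\sin(a/2)}{\sin(b/2)}$ differs from $a/b$ by a factor tending to $1$ uniformly. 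Therefore it suffices to prove
\[
\frac{\int_{I^+}w}{\int_{I^-}w}\longrightarrow 1\qquad\text{uniformly as }t\to0 .
\]

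The core step is the following local near‑constancy of the exponential of a VMO function: if $\varphi\in{\rm VMO}(\mathbb S)$, then
\[
\sup_{|I|\le\ell}\left|\frac{1}{|I|}\int_I e^{\varphi-\varphi_I}\,|dx|-1\right|\longrightarrow 0\qquad(\ell\to0),
\]
where $\varphi_I$ is the mean of $\varphi$ over $I$. The lower bound $\ge 1$ is Jensen's inequality, since $\int_I(\varphi-\varphi_I)=0$. For the upper bound I would use the density (for the BMO norm) of $L^\infty(\mathbb S)$ — indeed of $C(\mathbb S)$ — in ${\rm VMO}(\mathbb S)$: given $\eta>0$, choose continuous $\psi$ with $\|\varphi-\psi\|_{\rm BMO}<\eta$ and put $g=\varphi-\psi$. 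Writing $e^{\varphi-\varphi_I}=e^{\psi-\psi_I}e^{g-g_I}$ and applying the Cauchy--Schwarz inequality, the $g$‑factor is controlled by the John--Nirenberg inequality (for $\eta$ below an absolute constant, $\tfrac{1}{|I|}\int_I e^{2|g-g_I|}\le 1+C\eta$, uniformly over all intervals $I$), while the $\psi$‑factor is controlled by the uniform continuity of $\psi$, namely $|\psi-\psi_I|\le\omega_\psi(|I|)$ on $I$ with $\omega_\psi(\ell)\to0$. Letting $\ell\to0$ and then $\eta\to0$ gives the estimate.

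Finally I would apply this estimate with $\varphi=\log w$ to the two halves $I^\pm$ of a short interval $I$ (each of which has length tending to $0$): it yields $\int_{I^\pm}w=|I^\pm|\,e^{\varphi_{I^\pm}}\bigl(1+o(1)\bigr)$, hence
\[
\frac{\int_{I^+}w}{\int_{I^-}w}=e^{\varphi_{I^+}-\varphi_{I^-}}\bigl(1+o(1)\bigr),
\]
and $|\varphi_{I^+}-\varphi_{I^-}|\le\tfrac{4}{|I|}\int_I|\varphi-\varphi_I|\to0$ uniformly because $\varphi\in{\rm VMO}(\mathbb S)$. Thus the ratio tends to $1$ uniformly, so by the first paragraph $h$ is symmetric; this proves ${\rm SS}(\mathbb S)\subset{\rm S}(\mathbb S)$. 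The inclusion $T_V=\pi(M_V(\mathbb D^*))\subset\pi(M_0(\mathbb D^*))=T_0$ then follows at once: for $\mu\in M_V(\mathbb D^*)$, the map $f^\mu$ is strongly symmetric, hence symmetric, hence $[\mu]\in T_0$; equivalently, use the identifications $T_V\cong{\rm SS}(\mathbb S)/\mbox{\rm M\"ob}(\mathbb S)$ and $T_0\cong{\rm S}(\mathbb S)/\mbox{\rm M\"ob}(\mathbb S)$. The main obstacle is the upper bound in the core step: one must phrase the John--Nirenberg estimate so that it delivers a deviation $O(\eta)$ from $1$ rather than a mere bound, and one genuinely needs a \emph{continuous} (not just bounded) BMO‑approximant of $\varphi$ so that its oscillation over small intervals is negligible.
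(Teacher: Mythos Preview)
Your proof is correct and follows essentially the same route the paper indicates: the paper gives no explicit argument, only citing the density of $L^\infty(\mathbb S)$ in ${\rm VMO}(\mathbb S)$ with respect to the BMO norm and referring to \cite[Lemma~3.3]{Sh} for the details. Your argument is a correct, self-contained execution of precisely this idea---the reduction to $\int_{I^+}w/\int_{I^-}w\to1$, the core estimate via John--Nirenberg applied to a BMO-small remainder after subtracting a continuous approximant of $\log w$, and the final comparison of means over adjacent intervals are all standard and correctly carried out.
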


The corresponding Banach spaces of holomorphic functions on $\mathbb D$ associated with 
$M_B(\mathbb D^*)$ and $M_V(\mathbb D^*)$  
are defined as follows:
\begin{align*}
A_B(\mathbb D)&=\{\psi \in A(\mathbb D) \mid dm_\psi=(1-|z|^2)^3|\psi(z)|^2dxdy,\ m_\psi \in {\rm CM}(\mathbb D)\},\\
A_V(\mathbb D)&=\{\psi \in A(\mathbb D) \mid dm_\psi=(1-|z|^2)^3|\psi(z)|^2dxdy,\ m_\psi \in {\rm CM}_0(\mathbb D)\}.
\end{align*}
The norm on $A_B(\mathbb D)$ is defined by $\Vert \psi \Vert_{A_B} =\Vert m_\psi \Vert_c^{1/2}$.  
Then $A_V(\mathbb D)$ is a closed subspace of $A_B(\mathbb D)$.

The images of $M_B(\mathbb D^*)$ and $M_V(\mathbb D^*)$ under the Bers projection $S$ are  
contained in $A_B(\mathbb D)$ and $A_V(\mathbb D)$, respectively.
The following result was proved by Shen and Wei \cite[Theorem~5.1]{SW}.

\begin{proposition}\label{submersion}
The Bers projection $S:M_B(\mathbb D^*) \to A_B(\mathbb D)$ is holomorphic and admits 
a local holomorphic right inverse $\sigma$ defined on some neighborhood $U$ of every point  
in the image $S(M_B(\mathbb D^*))$. Namely, $S \circ \sigma={\rm id}|_U$. 
The restriction of $S$ to $M_V(\mathbb D^*)$ satisfies the analogous property as a holomorphic map into $A_V(\mathbb D)$.
\end{proposition}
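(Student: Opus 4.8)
The containment $S(M_B(\mathbb D^*))\subset A_B(\mathbb D)$ and $S(M_V(\mathbb D^*))\subset A_V(\mathbb D)$ is already available, so two things remain: holomorphy of $S$ in the finer norms, and a local holomorphic right inverse near an arbitrary point of the image. For holomorphy, I would argue that since $S:M(\mathbb D^*)\to A(\mathbb D)$ is a holomorphic split submersion and the inclusion $A_B(\mathbb D)\hookrightarrow A(\mathbb D)$ is continuous (the pointwise bound $(1-|z|^2)^2|\psi(z)|\lesssim \Vert m_\psi\Vert_c^{1/2}$ for $\psi\in A_B(\mathbb D)$ shows $\Vert\psi\Vert_A\lesssim\Vert\psi\Vert_{A_B}$, and likewise $\Vert\mu\Vert_\infty\le\Vert\mu\Vert_*$), it suffices to show that $S$ is \emph{locally bounded} as a map $M_B(\mathbb D^*)\to A_B(\mathbb D)$: whenever $\Vert\mu\Vert_*$ stays bounded, the Carleson norm $\Vert m_{S_{f_\mu}}\Vert_c$ of the associated Schwarzian stays bounded. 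This is the quantitative form of the Astala--Zinsmeister distortion theory, and once it is in hand, holomorphy of $S|_{M_B}$ into $A_B(\mathbb D)$ follows from the standard criterion that a locally bounded, G\^ateaux-holomorphic map between complex Banach spaces is holomorphic (the G\^ateaux slices are known to be holomorphic into $A(\mathbb D)$, and local $A_B$-boundedness upgrades this to $A_B(\mathbb D)$). The same argument with vanishing Carleson measures in place of Carleson measures handles the restriction to $M_V(\mathbb D^*)$.

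For the local right inverse I would first build a section near the origin $0=S(0)$ via the Ahlfors--Weill reflection. Given $\psi\in A(\mathbb D)$ with $\Vert\psi\Vert_A<1/2$, the Ahlfors--Weill formula produces a Beltrami coefficient $\nu_\psi$ on $\mathbb D^*$, depending complex-linearly on $\psi$, with $S(\nu_\psi)=\psi$; by the pointwise bound above this applies on an entire $\Vert\cdot\Vert_{A_B}$-ball around $0$, so holomorphy of $\psi\mapsto\nu_\psi$ in the $A_B$/$M_B$ norms reduces to boundedness. The decisive step is to show that this reflection carries the $A_B$-ball into $M_B(\mathbb D^*)$ with comparable norms: under the inversion $z\mapsto 1/\bar z$ between $\mathbb D$ and $\mathbb D^*$ one identifies, up to bounded factors, the density $\tfrac{|\nu_\psi(w)|^2}{|w|^2-1}\,du\,dv$ with $(1-|z|^2)^3|\psi(z)|^2\,dx\,dy$, and then one checks that Carleson boxes based on $\mathbb S$ in $\mathbb D^*$ correspond to Carleson boxes in $\mathbb D$ under this map, which yields $\Vert m_{\nu_\psi}\Vert_c\lesssim\Vert m_\psi\Vert_c=\Vert\psi\Vert_{A_B}^2$. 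The identical computation, carried out uniformly down to $\mathbb S$, shows $\nu_\psi\in M_V(\mathbb D^*)$ when $\psi\in A_V(\mathbb D)$. This produces a holomorphic right inverse $\sigma_0$ of $S$ on a neighborhood of $0$ in $A_B(\mathbb D)$ (resp.\ $A_V(\mathbb D)$).

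To move the section from $0$ to an arbitrary basepoint $\psi_0=S(\mu_0)$ with $\mu_0\in M_B(\mathbb D^*)$, I would conjugate by a Teichm\"uller translation. Precomposition with $f^{\mu_0}$ induces, on the Beltrami side, an automorphism of $M(\mathbb D^*)$ and, on the Bers side, a biholomorphism of the model domain, intertwined by $S$ and carrying $\psi_0$ to $0$; because $\mathrm{SQS}(\mathbb S)$ is a group (\cite[Lemma 5]{CF}) and $M_B(\mathbb D^*)$ is invariant under the corresponding translations by the Astala--Zinsmeister theory, this biholomorphism restricts to the $M_B$/$A_B$-structured spaces, and likewise to the $M_V$/$A_V$ ones. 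Conjugating $\sigma_0$ by it gives the required holomorphic local section on a neighborhood $U$ of $\psi_0$. The main obstacle in this plan is the Carleson-measure estimate for the Ahlfors--Weill reflection in the second paragraph: controlling $\Vert m_{\nu_\psi}\Vert_c$ by $\Vert m_\psi\Vert_c$ rests on a careful geometric comparison of Carleson boxes under $z\mapsto 1/\bar z$ together with a Fubini-type argument, and the $A_V$ version requires this comparison to be quantitatively uniform as the boxes shrink to $\mathbb S$; a secondary technical point is the quantitative distortion estimate underlying the local boundedness, hence holomorphy, of $S$ itself.
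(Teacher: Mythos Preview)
The paper does not supply its own proof of this proposition; it is stated there as a known result and attributed to Shen and Wei \cite[Theorem~5.1]{SW}, so there is no in-paper argument to compare against. Your outline---holomorphy via local boundedness plus G\^ateaux-holomorphy, the Ahlfors--Weill reflection as a holomorphic section near the origin together with the Carleson estimate $\Vert m_{\nu_\psi}\Vert_c \lesssim \Vert m_\psi\Vert_c$, and transport to an arbitrary basepoint by right translation---is essentially the strategy carried out in that reference, and the obstacles you flag (the box-comparison under $z\mapsto 1/\bar z$ and the quantitative distortion bound) are exactly the technical points addressed there.
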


The Bers embeddings for $T_B$ and $T_V$ are obtained by restricting $\alpha$ to these spaces.  
Proposition~\ref{submersion} ensures that $\alpha:T_B \to A_B(\mathbb D)$  
and $\alpha:T_V \to A_V(\mathbb D)$ are homeomorphisms onto their respective images,  
$\alpha(T_B)=S(M_B(\mathbb D^*))$ and $\alpha(T_V)=S(M_V(\mathbb D^*))$.  
The spaces $T_B$ and $T_V$ thus carry natural Banach manifold structures, with $T_V$ being a closed submanifold of $T_B$, and their Bers images satisfy 
\[
\alpha(T_B) = \alpha(T) \cap A_B(\mathbb{D}) \quad \text{and} \quad \alpha(T_V) = \alpha(T) \cap A_V(\mathbb{D}).
\]
Moreover, Proposition~\ref{SSinS} implies the inclusion 
$A_V(\mathbb{D}) \subset A_0(\mathbb{D})$.

The pre-Bers projections on $M_B(\mathbb D^*)$ and $M_V(\mathbb D^*)$ are similarly defined.  
The corresponding Banach spaces consist of  BMOA and VMOA functions on $\mathbb D$ (modulo additive constants):
\begin{align*}
{\rm BMOA}(\mathbb D)&=\{\varphi \in B(\mathbb D) \mid dm_\varphi=(1-|z|^2)|\varphi'(z)|^2dxdy,\ m_\varphi \in {\rm CM}(\mathbb D)\},\\
{\rm VMOA}(\mathbb D)&=\{\varphi \in B(\mathbb D) \mid dm_\varphi=(1-|z|^2)|\varphi'(z)|^2dxdy,\ m_\varphi \in {\rm CM}_0(\mathbb D)\}.
\end{align*}
The norm on $\mathrm{BMOA}(\mathbb{D})$ is defined by $\|\varphi\|_{\mathrm{BMOA}} = \|m_\varphi\|_c^{1/2}$, 
under which $\mathrm{VMOA}(\mathbb{D})$ forms a closed subspace. 
While several equivalent characterizations of $\mathrm{BMOA}$ are available (see \cite{Ga, Zhu}), 
we adopt the definition from \cite{AZ}; 
for a proof of equivalence, we refer to \cite[Theorem~6.5]{Gi}. As mentioned earlier, it is well known that $\mathrm{VMOA}(\mathbb{D}) \subset B_0(\mathbb{D})$, which may also be derived from Proposition~\ref{SSinS}.

The images of $M_B(\mathbb D^*)$ and $M_V(\mathbb D^*)$ under $L$,  
denoted by $\widetilde{\mathcal T}_B$ and $\widetilde{\mathcal T}_V$, are contained in  
${\rm BMOA}(\mathbb D)$ and ${\rm VMOA}(\mathbb D)$, respectively. Moreover, we have  
$\widetilde{\mathcal T}_B=\widetilde{\mathcal T} \cap {\rm BMOA}(\mathbb D)$ and  
$\widetilde{\mathcal T}_V=\widetilde{\mathcal T} \cap {\rm VMOA}(\mathbb D)$ (see \cite[p.~143]{SW}).

We now consider the map $J$ on $\mathrm{BMOA}(\mathbb{D})$ and $\mathrm{VMOA}(\mathbb{D})$. 
The images of these spaces under $J$ lie in $A_B(\mathbb{D})$ and $A_V(\mathbb{D})$ respectively, 
and the map $J:\mathrm{BMOA}(\mathbb{D}) \to A_B(\mathbb{D})$ is holomorphic. From the relation $J \circ L = S$, we obtain a holomorphic surjection 
$J: \widetilde{\mathcal{T}}_B \to \alpha(T_B)$ satisfying $J(\widetilde{\mathcal{T}}_V) = \alpha(T_V)$. 
In fact, $J$ corresponds to the projection of the real-analytic disk bundle 
$\widetilde{\mathcal{T}}_B$ over the BMO Teichm\"uller space $\alpha(T_B) \cong T_B$, 
with $\widetilde{\mathcal{T}}_V$ forming a subbundle (see \cite{Ma}).

\subsection{An Intermediate Teichm\"uller Space Between BMO and VMO}

Let $M_{B_0}(\mathbb D^*)=M_{B}(\mathbb D^*) \cap M_{0}(\mathbb D^*)$ be a closed subspace of $M_{B}(\mathbb D^*)$
with the norm $\Vert \mu \Vert_{\ast}$, which satisfies $M_{V}(\mathbb D^*) \subset M_{B_0}(\mathbb D^*) 
\subset M_{B}(\mathbb D^*)$.
We define an intermediate Teichm\"uller space between $T_V$ and $T_B$
as $T_{B_0}=\pi(M_{B_0}(\mathbb D^*))$. For $\mu \in M_{B_0}(\mathbb D^*)$, the quasisymmetric
self-homeomorphism $f^\mu$ lies in ${\rm SQS}(\mathbb S) \cap {\rm S}(\mathbb S)$. The main theorem (Corollary~\ref{answer}) asserts that
the inclusion ${\rm SS}(\mathbb S) \subset {\rm SQS}(\mathbb S) \cap {\rm S}(\mathbb S)$ is strict.
Thus we obtain the strict inclusion of Teichm\"uller spaces
$T_V \subset T_{B_0} \subset T_B$.

The corresponding Banach spaces of holomorphic functions are defined as $A_{B_0}(\mathbb D)=A_B(\mathbb D) \cap A_0(\mathbb D)$ with norm $\Vert \psi \Vert_{A_B}$ and 
${\rm BMOA}_0(\mathbb D)={\rm BMOA}(\mathbb D) \cap B_0(\mathbb D)$ with norm $\Vert \varphi \Vert_{\rm BMOA}$.
With these spaces, the properties of the 
previous Teichm\"uller spaces discussed above are also valid 
for this intermediate Teichm\"uller space $T_{B_0}$.

\begin{theorem}\label{intermediate}
The following statements hold:
\begin{enumerate}
\item The Bers projection $S:M_{B_0}(\mathbb D^*) \to A_{B_0}(\mathbb D)$ is holomorphic and admits a local holomorphic right inverse at every point in the image $S(M_{B_0}(\mathbb D^*))$.
\item The Bers embedding $\alpha:T_{B_0} \to A_{B_0}(\mathbb D)$ is a homeomorphism onto its image
$\alpha(T_{B_0})=S(M_{B_0}(\mathbb D^*))$, which coincides with $\alpha(T) \cap A_{B_0}(\mathbb D)$.
\item The Teich\-m\"ul\-ler space $T_{B_0}$ is endowed with the structure of a complex Banach manifold
modeled on $A_{B_0}(\mathbb D)$.
\item The pre-Bers projection $L:M_{B_0}(\mathbb D^*) \to {\rm BMOA}_0(\mathbb D)$ is holomorphic and
its image $\widetilde {\mathcal T}_{B_0}$ coincides with $\widetilde {\mathcal T} \cap {\rm BMOA}_0(\mathbb D)$.
\item The holomorphic map $J:\widetilde {\mathcal T}_{B_0} \to \alpha(T_{B_0})$ is the projection of
the real-analytic disk bundle $\widetilde {\mathcal T}_{B_0}$ over the Teichm\"uller space.
\end{enumerate}
\end{theorem}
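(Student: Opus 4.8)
The plan is to obtain all five assertions by intersecting the structure theory already recorded for the BMO Teichm\"uller space $T_B$ (Proposition~\ref{submersion}, the surrounding discussion, and \cite{SW,Ma}) with the corresponding facts for the little universal Teichm\"uller space from Section~\ref{Teich} (and \cite{GS}), using the defining identities $M_{B_0}=M_B\cap M_0$, $A_{B_0}=A_B\cap A_0$, ${\rm BMOA}_0={\rm BMOA}\cap B_0$, together with the inclusions $A_V\subset A_0$ and ${\rm VMOA}\subset B_0$ that follow from Proposition~\ref{SSinS}. Holomorphy of $S:M_{B_0}\to A_{B_0}$ and of $L:M_{B_0}\to{\rm BMOA}_0$ is then immediate: $S$ and $L$ are holomorphic on $M_B$ and on $M_0$ with images in $A_B$, $A_0$ and in ${\rm BMOA}$, $B_0$, the target spaces $A_{B_0}$, ${\rm BMOA}_0$ are closed subspaces of $A_B$, ${\rm BMOA}$, and $S(M_{B_0})\subseteq A_B\cap A_0$, $L(M_{B_0})\subseteq{\rm BMOA}\cap B_0$. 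The one genuinely new point, which I expect to be the crux, is that the local holomorphic right inverses of $S$ and of $L$ can be chosen to respect asymptotic triviality simultaneously with the Carleson condition, i.e.\ to take values in $M_{B_0}$ whenever the argument lies in $A_{B_0}$, respectively ${\rm BMOA}_0$.

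To construct such a right inverse of $S$ near a point $S(\mu_0)$ with $\mu_0\in M_{B_0}$, I would first move the basepoint to the origin by the change-of-basepoint biholomorphism of the universal Teichm\"uller space determined by $[\mu_0]$; it restricts to a biholomorphic automorphism of each of $T_0$, $T_B$, $T_{B_0}$ because ${\rm S}(\mathbb S)$, ${\rm SQS}(\mathbb S)$ and their intersection are subgroups of ${\rm QS}(\mathbb S)$ containing $f^{\mu_0}|_{\mathbb S}$. Near the origin one uses the Ahlfors--Weill section $\sigma$, holomorphic on a ball in $A(\mathbb D)$ with $S\circ\sigma={\rm id}$; the substitution $w=1/\bar z$ identifies, up to bounded factors, the measure $|\sigma(\psi)(z)|^2/(|z|^2-1)\,dx\,dy$ on $\mathbb D^*$ with $(1-|w|^2)^3|\psi(w)|^2\,du\,dv$ on $\mathbb D$ and $|\sigma(\psi)(z)|$ with $(1-|w|^2)^2|\psi(w)|$, so that $\sigma$ carries $A_B$ into $M_B$, $A_V$ into $M_V$, $A_0$ into $M_0$, and hence $A_{B_0}$ into $M_{B_0}$. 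Conjugating back gives the required local right inverse of $S$ valued in $M_{B_0}$; since $L$ restricted to $M_B$ (resp.\ $M_0$) is likewise a holomorphic split submersion onto $\widetilde{\mathcal T}_B$ (resp.\ $\widetilde{\mathcal T}_0$) by \cite{SW,Ma}, the same reduction produces a local holomorphic right inverse of $L$ near every point of $\widetilde{\mathcal T}_B$ which sends $B_0$-elements into $M_0$. The main obstacle is precisely this step: checking that the change-of-basepoint map is holomorphic and preserves all three subspaces, and keeping track of the transformed measures so that they land in the right Carleson and vanishing-Carleson classes; each ingredient is standard, but assembling them cleanly is the delicate part. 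This establishes statement (1).

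Granting (1), the remaining items are formal. For (2), $\alpha(T_{B_0})=\alpha(\pi(M_{B_0}))=S(M_{B_0})$ by injectivity of $\alpha$ on $T$; the inclusion $S(M_{B_0})\subseteq(\alpha(T)\cap A_B)\cap(\alpha(T)\cap A_0)=\alpha(T)\cap A_{B_0}$ is immediate, and conversely any $\psi\in\alpha(T)\cap A_{B_0}$ lies in $\alpha(T_B)=S(M_B)$, so the right inverse of $S$ near $\psi$, which sends $A_0$ into $M_0$, produces a preimage of $\psi$ in $M_B\cap M_0=M_{B_0}$, whence $\psi\in S(M_{B_0})$. Item (3) follows exactly as for $T_B$: (1) makes $\alpha$ a local homeomorphism onto the open subset $\alpha(T)\cap A_{B_0}$ of $A_{B_0}$ and transports the complex Banach manifold structure. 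For (4), the inclusion $L(M_{B_0})\subseteq{\rm BMOA}_0$ was already noted, while for any $\varphi\in\widetilde{\mathcal T}\cap{\rm BMOA}_0=\widetilde{\mathcal T}_B\cap\widetilde{\mathcal T}_0$ the right inverse of $L$ near $\varphi$ yields $\mu\in M_B\cap M_0=M_{B_0}$ with $L(\mu)=\varphi$, so $\widetilde{\mathcal T}_{B_0}=L(M_{B_0})=\widetilde{\mathcal T}\cap{\rm BMOA}_0$. Finally, for (5), since $\alpha(T_{B_0})=\alpha(T_B)\cap\alpha(T_0)$ by (2) and $\widetilde{\mathcal T}_{B_0}=\widetilde{\mathcal T}\cap{\rm BMOA}_0$ by (4), the restriction of the real-analytic disk bundle $J:\widetilde{\mathcal T}_B\to\alpha(T_B)$ over the subset $\alpha(T_{B_0})$ has total space exactly $\widetilde{\mathcal T}_{B_0}$, with unchanged fibres -- just as $\widetilde{\mathcal T}_V$ is a subbundle of $\widetilde{\mathcal T}_B$ over $\alpha(T_V)$ -- which is the asserted disk bundle.
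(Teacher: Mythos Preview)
Your proposal is correct and follows essentially the same approach as the paper: both identify the existence of a local holomorphic right inverse of $S$ valued in $M_{B_0}(\mathbb D^*)$ as the only nontrivial point, with the remaining statements following formally from the existing $T_B$ and $T_0$ theory. The paper's proof is more compressed: it simply invokes the section $\sigma$ from Proposition~\ref{submersion} and asserts that $\sigma$ maps $U\cap A_0(\mathbb D)$ into $M_{B_0}(\mathbb D^*)$, whereas you unpack the construction of that section (Ahlfors--Weill at the origin plus change of basepoint) in order to verify explicitly that it respects the little subspaces; since the Shen--Wei section is built precisely this way, your argument is the detailed justification of what the paper leaves as ``we can check.''
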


\begin{proof}
The only claim requiring verification is the existence of a local holomorphic right inverse to $S$. The other statements 
follow directly from the arguments in the previous cases. We apply the local holomorphic right inverse $\sigma$
defined on $U$ in Proposition~\ref{submersion} also in the present setting. Then we can check that
$\sigma$ maps $U \cap A_0(\mathbb D)$ into $M_{B_0}(\mathbb D^*)$, which yields the desired local holomorphic right inverse
to $S:M_{B_0}(\mathbb D^*) \to A_{B_0}(\mathbb D)$.
\end{proof}

Finally,
we consider $T_{B_0}$ and related spaces with a view toward whether the properties that hold for 
$T_V$ but fail for $T_B$ may still hold for $T_{B_0}$. Using equivalent characterizations of these spaces, we pose the following problems in this direction.

\begin{problem}
Is an asymptotically conformal quasicircle satisfying the Bishop--Jones condition necessarily chord-arc?
\end{problem}

Let $T_C$ be the set of elements in $T_B$ corresponding to chord-arc curves. It is known that 
$T_C$ is an open subset of $T_B$ containing $T_V$, but it is not known whether $T_C$ is connected (see \cite{AZ}). 
The above problem asks whether $T_C$ contains $T_{B_0}$.

\begin{problem}
Is $L^\infty(\mathbb S)$ dense in ${\rm BMO}_0(\mathbb S)$, the boundary extension of 
${\rm BMOA}_0(\mathbb D)$?  
Is $e^\varphi$ an $A_2$-weight for every $\varphi \in {\rm BMO}_0(\mathbb S)$?
\end{problem}

As mentioned above, $L^\infty(\mathbb S)$ (more precisely, its intersection with ${\rm VMO}(\mathbb S)$)
is dense in ${\rm VMO}(\mathbb S)$, but not in ${\rm BMO}(\mathbb S)$. The denseness of $L^\infty(\mathbb S)$ is
an important property for the study of subspaces of the {\rm BMO} Teichm\"uller space.
For $\varphi \in {\rm BMO}(\mathbb S)$, the distance from $\varphi$ to $L^\infty(\mathbb S)$ in
the BMO norm is comparable to the infimum of such $t>0$ for which $e^{t\varphi}$ is an $A_2$-weight,
by a theorem of Garnett and Jones (see \cite[Corollary IV.5.15]{GR}).

\begin{problem}
Does there exist a real-analytic global section of the Teichm\"uller projection onto $T_{B_0}$?
\end{problem}

Using the barycentric extension due to Douady and Earle \cite{DE}, a global section 
$s_{\rm DE}:T_B \cong {\rm SQS}(\mathbb S)/\mbox{\rm M\"ob}(\mathbb S) \to M_B(\mathbb D^*)$ satisfying 
$\pi \circ s_{\rm DE}={\rm id}|_{T_B}$, was constructed by Cui and Zinsmeister \cite{CZ}.
It was shown in \cite{TWS} that $s_{\rm DE}$ is continuous when restricted to $T_V$, from which
the contractibility of $T_V$ follows. On the other hand, using a variant of
the Beurling--Ahlfors extension via the heat kernel (see \cite{FKP}, \cite{WM-1}), 
a real-analytic global section $s_{\rm FKP}:T_V \cong {\rm SS}(\mathbb S)/\mbox{\rm M\"ob}(\mathbb S)\to M_V(\mathbb D^*)$
was constructed in \cite[Theorem 6.4]{WM-2}. The above problem asks whether an analogous construction is possible for $T_{B_0}$.
For the construction of $s_{\rm FKP}$, the denseness of $L^\infty(\mathbb S)$ plays a crucial role.

\end{document}